\numberwithin{equation}{section}
\theoremstyle{plain}
\newtheorem{lemma}{Lemma}[section]
\newtheorem{theorem}[lemma]{Theorem}
\newtheorem*{theorem*}{Theorem}
\newtheorem{proposition}[lemma]{Proposition}
\newtheorem{corollary}[lemma]{Corollary}
\theoremstyle{definition}
\newtheorem{definition}[lemma]{Definition}
\newtheorem*{definition*}{Definition}
\theoremstyle{remark}
\newtheorem{remark}[lemma]{Remark}
\DeclareMathOperator{\Span}{span}
\DeclareMathOperator{\card}{card}
\DeclareMathOperator{\Ad}{Ad}
\newcommand{\Wab}{\widehat W_{\mathrm{ab}}}
\newcommand{\calA}{\mathcal A} 
\newcommand{\calF}{\mathcal F}
\newcommand\half{\frac{1}{2}}
\newcommand{\mN}{\mathbb N}
\newcommand{\mQ}{\mathbb Q}
\newcommand{\mZ}{\mathbb Z}
\newcommand\Dp{\Phi^+}
\newcommand\Dap{\widehat\Phi^+}
\newcommand\Wa{\widehat{W}}
\renewcommand\b{\mathfrak b}
\renewcommand\d{\delta}
\renewcommand\t{\mathfrak t}
\renewcommand\u{\mathfrak u}
\renewcommand\a{\alpha}
\renewcommand\aa{\mathfrak a}
\newcommand\goa{\mathfrak a}
\newcommand\wgoa{\widehat {\mathfrak a}}
\newcommand\g{\mathfrak g}
\newcommand{\gog}{\mathfrak g}
\newcommand{\got}{\mathfrak t}
\newcommand{\gou}{\mathfrak u}
\newcommand{\gob}{\mathfrak b}
\newcommand{\gop}{\mathfrak p}
\newcommand{\goz}{\mathfrak z}
\newcommand{\goi}{\mathfrak i}
\newcommand{\goj}{\mathfrak j}
\newcommand{\gra}{\alpha}
\newcommand{\grb}{\beta} 
\newcommand{\grg}{\gamma}
\newcommand{\grd}{\delta}
\newcommand{\grl}{\lambda}
\newcommand{\grs}{\sigma}
\newcommand{\gre}{\varepsilon}
 \newcommand{\grD}{\Delta}  
\newcommand{\mk}  {\mathsf k}
\newcommand{\ra}         {\rightarrow}
\renewcommand{\geq}      {\geqslant}
\renewcommand{\leq}      {\leqslant}
\newcommand{\ab}       {\mathrm{ab}}
\newcommand{\ad}       {\mathrm{ad}}
\newcommand{\id}       {\mathrm{id}}
\newcommand{\mru}       {\mathrm u}
\newcommand{\wPsi}       {\widehat \Psi}
\newcommand{\wPhi}      {\widehat \Phi}
\DeclareMathOperator{\rk}{rk}
\begin{document}

\title[The Bruhat order on abelian ideals]{The Bruhat order on abelian ideals\\of Borel subalgebras}
\author[J.~Gandini]{Jacopo Gandini}
\author[A.~Maffei]{Andrea Maffei}
\author[P.~M\"oseneder Frajria]{Pierluigi M\"oseneder Frajria}
\author[P.~Papi]{Paolo Papi}
\date{\today}

\begin{abstract} 
Let $G$ be a quasi simple algebraic group over an algebraically closed field $\mk$ whose characteristic is not very bad for $G$, and let $B$ be a Borel subgroup of $G$ with Lie algebra $\gob$. Given a $B$-stable abelian subalgebra $\goa$ of the nilradical of $\b$, we parametrize the $B$-orbits in $\goa$ and we describe their closure relations.
\end{abstract}

\keywords{Abelian ideal, Bruhat order, nilpotent orbit, spherical variety}
\subjclass[2010]{Primary 14M17; Secondary 14M27, 17B08}

\maketitle

\section{Introduction}

Let $G$ be a quasi simple algebraic group over an algebraically closed field $\mk$ whose characteristic is not very bad for $G$ (see Section \ref{ssez:preliminari} for the definition). Fix a Borel subgroup $B \subset G$  and a maximal torus $T \subset B$. Let $U$ be the unipotent radical of $B$. We denote the Lie algebras of $G$, $B$, $U$ respectively by $\gog, \gob$ and  $\u$. 

Let $\aa$ be a $B$-stable abelian subalgebra of $\u$, then $B$ acts on $\aa$ with finitely many orbits. When the characteristic of $\mk$ is good for $G$, this was noticed by G.~R\"ohrle \cite{Ro}, and then proved conceptually by the same author together with D.I.~Panyushev \cite{PR} (see also \cite{FR}), by showing a connection between the $B$-stable abelian subalgebras of $\u$ and the spherical nilpotent orbits in $\gog$. If the characteristic of $\mk$  is zero, the $B$-stable abelian subalgebras of $\u$ are exactly the abelian ideals of $\gob$. In this case, a combinatorial parametrization of the $B$-orbits was recently given by Panyushev \cite{Pa5}, by exhibiting canonical base points and establishing a parametrization in terms of the combinatorics of the root system of $G$.

The aim of this paper is to extend Panyushev's combinatorial parametrization of the $B$-orbits in $\aa$ to the case of positive characteristic, and to give a combinatorial description of the corresponding Bruhat order (that is, the partial order among the $B$-orbits given by the closure relations). This generalizes to arbitrary $B$-stable abelian subalgebras of $\u$ a conjecture of Panyushev \cite{Pa5} recently proved by the first two authors \cite{GM}, devoted to the case of the abelian ideals of $\gob$ which arise as the nilradicals of standard parabolic subalgebras of $\gog$.

The main ingredient which makes this generalization possible is the combinatorics of the affine Weyl group. More precisely, let $\Phi$ be the root system of $G$ associated to $T$ (regarded as the set of $T$-weights in $\g$), let $\grD \subset \Phi^+ \subset \Phi$ be the base and the set of positive roots determined by $B$, and set $\Phi^- = -\Phi^+$. Let $\widehat\Phi=\Phi\pm \mZ \grd$ be the corresponding affine 
root system ($\grd$ being the fundamental imaginary root), let 
$\widehat\Phi^+=(\Phi^++\mN \grd)\cup (\Phi^- +\mN_{>0}\delta)$
be a set of positive roots in $\widehat\Phi$ compatibly chosen with $\Phi^+$, and set $\widehat \Phi^- = - \widehat \Phi^+$.
 Let $W$ and $\widehat W$ be respectively the Weyl group and the  
affine Weyl group of $G$. Our choices determine sets of Coxeter generators, length functions,
and Bruhat orders $\leq$ for both $W$ and $\widehat W$.

Let $\widehat \gog =\gog[z,z^{-1}]\oplus \mk C \oplus \mk d$ be the affinization of $\gog$. For any real root $\gra\in \widehat\Phi$, we fix a root vector $e_\gra \in \widehat \gog$ and denote by $s_\gra \in \widehat W$ the associated reflection. More generally, if $S\subset \widehat \Phi$ is a finite set of pairwise orthogonal real roots (for short, an orthogonal set of roots) we define
$$
	\grs_S=\prod _{\gra\in S}s_\gra, \qquad e_S=\sum_{\a\in S} e_\a.$$
	
Let $\aa$ be a $B$-stable abelian subalgebra of $\u$. Then $\aa$ is a sum of root spaces of $\gob$, and we denote by $\Psi(\aa) \subset \Phi^+$ its set of $T$-weights. Panyushev \cite{Pa5} proved that the assignment $S \mapsto Be_S$ gives a bijection 
between the orthogonal subsets of $\Psi(\aa)$ and the $B$-orbits in $\aa$. This bijection carries over when the characteristic of $\mk$ is not very bad for $G$ (see Corollary \ref{cor:parametrizzazionedimensione}), and our main result is the following.

\begin{theorem*}[see Corollary \ref{cor:parametrizzazionedimensione} and Theorem \ref{teo:principale}]
Let $\aa$ be $B$-stable abelian subalgebra of $\u$ and let $S,S' \subset \Psi(\aa)$ be orthogonal subsets. Set $\widehat S = S-\grd$ and $\widehat S' = S'-\grd$, then
$$e_{S'} \in \overline{Be_S} \text{ if and only if } \grs_{\widehat S'}\leq \grs_{\widehat S}.$$
Moreover,
$$\dim Be_S=\frac{\ell(\grs_{\widehat S})+\card(S)}{2}.$$
\end{theorem*}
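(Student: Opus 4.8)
The strategy is to transport the question into the affine flag variety of the Kac--Moody group $\widehat G$ with Lie algebra $\widehat\gog$, realising each orbit $Be_S$ inside the Schubert cell of $\grs_{\widehat S}$, so that the closure order on $B$-orbits becomes the Bruhat order on $\Wa$. Fix the Iwahori subgroup $\widehat B\subset\widehat G$ attached to $\Dap$, so $G\subset\widehat G$ and $B=G\cap\widehat B$. Since $\aa$ is abelian, $\bigoplus_{\a\in\Psi(\aa)}\widehat\gog_{\a-\d}$ is a commutative subalgebra of $\widehat\gog$ consisting of real root vectors acting locally nilpotently, and I would study the morphism
$$
\iota\colon\aa\lra\widehat G/\widehat B,\qquad x\longmapsto\exp(z^{-1}x)\,\widehat B,
$$
which is injective with locally closed image, and $B$-equivariant for the adjoint action on $\aa$ and for left translation on $\widehat G/\widehat B$: as $b\in G$ commutes with scalar multiplication by $z$, one has $\iota(\Ad(b)x)=\exp(\Ad(b)(z^{-1}x))\widehat B=b\exp(z^{-1}x)b^{-1}\widehat B=b\cdot\iota(x)$, using $b\in B\subseteq\widehat B$. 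The key geometric input, from which the ``only if'' half of the theorem is immediate, is
\begin{equation}\label{eq:keycell}
\iota(e_S)\in\widehat B\,\dot\grs_{\widehat S}\,\widehat B/\widehat B\qquad\text{for every orthogonal }S\subseteq\Psi(\aa).
\end{equation}
Granting \eqref{eq:keycell}: by equivariance $\iota(Be_S)$ lies in the ($\widehat B$-stable) cell of $\grs_{\widehat S}$, so if $e_{S'}\in\overline{Be_S}$ then $\iota(e_{S'})\in\overline{\iota(Be_S)}\subseteq\overline{\widehat B\,\dot\grs_{\widehat S}\,\widehat B/\widehat B}=\bigsqcup_{v\leq\grs_{\widehat S}}\widehat B\,\dot v\,\widehat B/\widehat B$; since $\iota(e_{S'})$ also lies in the cell of $\grs_{\widehat S'}$ and the cells are disjoint, $\grs_{\widehat S'}\leq\grs_{\widehat S}$. (The map $S\mapsto\grs_{\widehat S}$ is injective, so this makes sense orbit by orbit: a product of distinct pairwise commuting reflections determines its set of factors, and $s_{\a-\d}=s_{\a'-\d}$ forces $\a=\a'$.)

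To prove \eqref{eq:keycell} I would reduce to rank one. Writing $S=\{\a_1,\dots,\a_k\}$ and $\grg_i=\d-\a_i\in\Dap$, the roots $\grg_i$ are pairwise orthogonal, and since $\aa$ is abelian and $S$ orthogonal none of $\grg_i\pm\grg_j$ is a root; hence the rank-one subgroups $\widehat G_{\grg_i}$ pairwise commute and $\prod_i\widehat G_{\grg_i}$ is their direct product, with Borel $\prod_i(\widehat B\cap\widehat G_{\grg_i})$. Now $\exp(z^{-1}e_S)=\prod_i\exp(e_{\a_i-\d})$ lies in $\prod_i\widehat G_{\grg_i}$, and the rank-one Bruhat decomposition puts each factor in $(\widehat B\cap\widehat G_{\grg_i})\,\dot s_{\grg_i}\,(\widehat B\cap\widehat G_{\grg_i})$; multiplying and using $\grs_{\widehat S}=\prod_i s_{\grg_i}=\prod_i s_{\d-\a_i}$ gives \eqref{eq:keycell}. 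Here the hypothesis on the characteristic is what guarantees that the relevant copies of $\mathrm{SL}_2$, their root subgroups and the exponential behave as in characteristic zero. For the dimension formula one identifies $T_{e_S}(Be_S)$ with $[\gob,e_S]$ and computes its dimension (the characteristic being not very bad, a bracket $[e_\grg,e_\a]$ with $\grg\in\Dp$, $\a\in S$ is nonzero exactly when $\grg+\a\in\Phi$), and then checks by a count in $\Dap$ that this equals $\half(\ell(\grs_{\widehat S})+\card(S))$; the parity statement contained in this reflects that $\grs_{\widehat S}$ is an involution of reflection length $\card(S)$.

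For the ``if'' direction one must show that $\grs_{\widehat S'}\leq\grs_{\widehat S}$ forces $e_{S'}\in\overline{Be_S}$. I would first establish, combinatorially, that the elements $\grs_{\widehat S}$ form a graded subposet of $(\Wa,\leq)$ with rank function $\half(\ell(\grs_{\widehat S})+\card(S))$ --- equivalently, that no Bruhat cover between two such elements lowers the reflection length, abelianity of $\aa$ being what rules this out --- so that it suffices to treat coverings $\grs_{\widehat S'}\lessdot\grs_{\widehat S}$. Then, for each such covering, one exhibits an explicit one-parameter curve $t\mapsto b(t)\cdot e_S$ in $Be_S$ degenerating to $e_{S'}$ as $t\to 0$, after classifying the combinatorial shapes a cover can take in terms of the poset $\Psi(\aa)$ and performing the corresponding rank-one or rank-two computation. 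I expect this to be the main obstacle: one must simultaneously control the Bruhat covers among the $\grs_{\widehat S}$ and produce matching elementary degenerations of $B$-orbits in $\aa$, uniformly over all characteristics which are not very bad for $G$. (Equivalently, one needs that $\iota(\aa)\cap\overline{\widehat B\,\dot\grs_{\widehat S}\,\widehat B/\widehat B}$ is irreducible, equal to $\iota(\overline{Be_S})$; but establishing this seems to require the same input.)
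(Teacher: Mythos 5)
Your framework for the ``only if'' half --- embedding $\aa$ in the affine flag variety via $x\mapsto\exp(z^{-1}x)\,\widehat B$ and locating $e_S$ in the Schubert cell of $\grs_{\widehat S}$ --- is exactly the content of Lemma~\ref{lem:eS} (for $v=e$), and that half of your argument is sound. But the proposal does not prove the ``if'' direction, which is the heart of the theorem, and the route you sketch for it contains a gap that is essentially as hard as the theorem itself: you propose to reduce to Bruhat covers $\grs_{\widehat S'}\lessdot\grs_{\widehat S}$ \emph{within the family} of involutions attached to orthogonal subsets of $\Psi(\aa)$, but you give no argument that a chain in $(\Wa,\leq)$ from $\grs_{\widehat S'}$ to $\grs_{\widehat S}$ can be chosen to pass only through such involutions. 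A closely related saturation property of this family is precisely Theorem~\ref{teo:forma-fortissima}, which the authors state they can only verify case by case. The paper circumvents this entirely by a different induction: it introduces the extensions $B_v=B\ltimes\wgoa_v$ for minuscule $v\leq w$ and the minimal parabolics $P_{\grb,v}$, sorts the descents of $\grs_{v(S)}$ into finite and affine types (Definition~\ref{def:discesefiniteaffini}), and for each descent produces an operator $\calF_\gra$ lowering $L$ by one, so that the induction runs over descents of the involution rather than over covers in the Bruhat order. Nothing in your proposal substitutes for this machinery, and you yourself flag this step as the main obstacle rather than resolving it.

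Two further points. First, your justification of the injectivity of $S\mapsto\grs_{\widehat S}$ (``a product of distinct pairwise commuting reflections determines its set of factors'') is false in general: in type $D_4$ the orthogonal quadruples $\{\gre_1\pm\gre_3-\grd,\,\gre_2\pm\gre_4-\grd\}$ and $\{\gre_1\pm\gre_4-\grd,\,\gre_2\pm\gre_3-\grd\}$ give the same involution (Remark~\ref{nls}). Injectivity does hold for orthogonal subsets of a fixed $\widehat\Psi(w)$, but this is Proposition~\ref{prop:iniettivita-involuzioni}, whose proof rests on the nontrivial fact (Proposition~\ref{prop:radici-attive-reali}, via the Panyushev--R\"ohrle bound $\ad(e)^4=0$) that every root negated by $\grs_{\widehat S}$ has the form $\tfrac12(\pm\grb\pm\grb')$ with $\grb,\grb'\in\widehat S$. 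Second, your dimension argument via $T_{e_S}(Be_S)=[\gob,e_S]$ is not available in positive characteristic without checking separability of the orbit map: a priori one only has $\dim Be_S\geq\dim[\gob,e_S]$, and the ``count in $\Dap$'' identifying $\dim[\gob,e_S]$ with $\tfrac12(\ell(\grs_{\widehat S})+\card S)$ is not carried out and again requires Proposition~\ref{prop:radici-attive-reali}. The paper avoids both issues by computing dimensions inductively through the groups $B_v$ (Proposition~\ref{prop:decomposizione-ind} and Theorem~\ref{teo:parametrization}).
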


Notice that the previous theorem provides some sort of an affine formulation of the description conjectured by Panyushev in \cite{Pa5} and proved by the first two authors in \cite{GM}. Suppose indeed that $\goa = \gop^\mru$ is the nilradical of a parabolic subalgebra $\gop$, and let $W_P \subset W$ be the parabolic subgroup defined by $\gop$ and $w_P \in W_P$ its longest element. Given orthogonal subsets $S,S' \subset \Psi(\gop^\mru)$, Panyushev conjectured in this case that
$$
	e_{S'} \in \overline{Be_S} \text{ if and only if } \grs_{w_P(S')}\leq \grs_{w_P(S)},
$$
and that $\dim Be_S= \frac{1}{2} (\ell(\grs_{w_P(S)})+\card(S))$.

To explain the equivalence of the two formulations, recall that the nilradical of a standard parabolic subalgebra $\gop$ of $\gog$ is abelian if and only if $\gop$ is a maximal standard parabolic subalgebra, corresponding to a simple root $\gra_P \in \grD$ which occurs with multiplicity one in the highest root $\theta$ of $\Phi$.
If $\alpha_0 = \delta - \theta$ denotes the affine simple root, we can thus define an involution of the extended Dynkin diagram $\widehat \grD=\Delta\cup\{\a_0\}$ by setting
$\varphi(\a_P)=\a_0$ and $\varphi(\a)=-w_P(\a)$ for all $\gra\in \Delta\smallsetminus\{\gra_P\}$. The map $\varphi$ induces an involution 
of $\widehat W$ (still denoted by $\varphi$), which preserves the Bruhat order and the length function. Notice that $w_P(\alpha_P) = \theta$, and that $\alpha_P$ occurs with multiplicity one in every root in $\Psi(\gop^\mru)$. If moreover $\grb \in \Psi(\gop^\mru)$, then $\varphi(\beta) = \delta - w_P(\beta)$, yielding $\varphi(w_P(S)) = \grd - S$ and $\varphi(\sigma_{w_P(S)}) = \sigma_{\widehat S}$, which shows the equivalence of the two formulations.

The idea to use the affine root system  in the study of abelian ideals dates back to Peterson (see \cite{Ko}) and has been systematically used by many authors
(\cite{CP}, \cite{CP2}, \cite{Pa3}, \cite{Pa4}, \cite{Sommers}, \cite{Su}). Let us recall the key points in our setting. 
If $\aa$ is a $B$-stable abelian subalgebra of $\u$, then $\wgoa=z^{-1}\aa\subset \widehat\gog$ is a $B$-module isomorphic to $\aa$. 
Strictly speaking, all along the paper we will study the $B$-orbits in $\wgoa$: this is of course equivalent 
to the original problem. The set of $T$-weights of this $B$-module is the set of negative roots $\widehat\Psi(\aa) := \Psi(\aa)-\grd$,
and it has the fundamental property of being \textit{biclosed}: that is, both $\wPsi(\aa)$ and $\widehat\Phi^-\smallsetminus \wPsi(\aa)$ are closed under root addition. This property implies the existence of an element $w_\aa \in \widehat W$ such that 
$$
\wPsi(\aa)=\{\gra\in \wPhi^-\colon w_\aa (\gra)\in \wPhi^+\}.
$$

The assignment $\aa \mapsto w_\aa$ gives a bijective correspondence between the set of the $B$-stable abelian subalgebras of $\u$ and a remarkable class of elements of $\widehat W$. Set $\wPsi:=\Phi^+-\grd$. We say that an element $w\in \widehat W$ is \emph{minuscule} if
$$ \{\gra\in \wPhi^-\colon w(\gra)\in \wPhi^+\}\subset \wPsi,$$
and we denote by $\widehat W_{\ab}\subset \widehat W$ the set of minuscule elements.

Given $w \in \widehat W_\ab$, we define
$$ \widehat\Psi(w) = \{\gra\in \wPhi^-\colon w(\gra)\in \wPhi^+\}.$$
Correspondingly, we define two (isomorphic) $B$-stable subspaces of $\widehat \gog$ as follows
$$ \wgoa_w = \bigoplus_{\gra\in \wPsi(w)}\gog_\gra, \qquad \qquad  \goa_w=z\wgoa_w.$$
The space $\goa_w$ is easily seen to be a $B$-stable abelian subalgebra of $\u$, and the assignment $w \mapsto \goa_w$ gives a bijection 
between the minuscule elements of $\widehat W$ and the $B$-stable abelian subalgebras of $\u$. We note that usually one encodes minuscule elements by means of their positive inversions, namely by $-\wPsi(w)$. However this would lead to a $B^-$-stable subspace of $\widehat \gog$ (where $B^-$ denotes the opposite Borel subgroup of $B$ in $G$ with respect to $T$), and since we are presently interested in the orbit structure of the $B$-stable abelian subalgebras of $\u$, our choice seems preferable in this context.

The general strategy of the proof of our main theorem is similar to the one given in \cite{GM} in the case of the abelian nilradicals. 
As in that case, the most difficult part is to prove that the relations among the involutions imply the relations among 
the associated orbits closures. To deal with this problem, we consider a family of extensions of $B$ parametrised by $\widehat W_{\ab}$. More precisely, for $v \in \widehat W_\ab$, set $B_v = B\ltimes \widehat \aa_v$. These extensions have the following properties: if $v,w \in \widehat W_\ab$, then $B_v \subset B_w$ if and only if $v \leq w$, and $B_v$ acts on $\widehat \aa_w$ if and only if $v \leq w$ (see Proposition \ref{prop:weak-iff-strong} and formulas \eqref{eq:gruppo} -- \eqref{eq:azione}). This allows us to argue by decreasing induction on the length of $v \leq w$ in order to study of the $B_v$-orbit structure of $\widehat \aa_w$.

In particular, we analyse some basic relations among orbits whose dimensions differ by one. 
This analysis will be controlled by the descents of the involutions $\grs_S$, where $S$ varies among the orthogonal subsets of $\widehat \Psi(w)$.
In the case of the abelian nilradicals, this part of the analysis was performed by considering the action of the minimal parabolic subgroups of $G$ in the Hermitian symmetric variety associated to the abelian nilradical.
This is no more possible in the more general context of the $B$-stable abelian subalgebras of $\u$, however we are still able to perform our analysis by distinguishing the descents of the involutions $\grs_S$ into two families, 
the {\it finite descents} and the {\it affine descents} (see Definition \ref{def:discesefiniteaffini}), and adopting different strategies in the two cases.

The results and the techniques of the present paper are further extended in \cite{GMP} in order to study the $B$-orbits in $G\goa$ for a basic class of $B$-stable abelian subalgebras $\goa \subset \gou$.

The paper is organised as follows. Section \ref{ssez:preliminari}  is mainly devoted to preliminaries. In Subsection 
\ref{cp} we perform a detailed analysis, in positive characteristic,  of several notions which are equivalent in characteristic zero to being an abelian ideal of $\b$.

In Section \ref{sez:minuscoli} we recall and prove some properties of the minuscule elements. In Section \ref{sez:involuzioni} we prove
some properties of the involutions that we consider, and in particular we introduce the notions of finite and affine descents.
In Section \ref{sez:dimensione} we introduce the extensions $B_v$ and we give a classification of $B_v$-orbits which is slightly different from the one given in 
\cite{Pa5} and which allows to prove the formula for the dimension of an orbit. Finally in Section \ref{sez:principale} we describe the Bruhat order.

\noindent \textit{Acknowledgments.} We thank the anonymous referee for his/her useful remarks.

\section{Notation and preliminaries} \label{ssez:preliminari}

We expand here on the notation already set in the Introduction. Algebraic groups will be always denoted by capital letters, whereas their Lie algebras will be denoted by the corresponding fraktur letters. Recall that $G$ is a quasi simple algebraic group, and that $B \subset G$ is a fixed Borel subgroup with maximal torus $T \subset B$ and unipotent radical  $U$, so that $B=TU$.

A prime $p > 0$ is said to be \textit{very bad} for $G$ if $p=2$ and $G$ is of type $B_n,C_n, F_4$ or $G_2$, or if $p=3$ and $G$ is of type $G_2$, 
whereas it is called \textit{good} for $G$ if it does not divide any coefficient of the highest root of $\Phi$ 
(namely, $p\neq 2$ if $G$ is not of type $A_r$, $p \neq 3$ if $G$ is of type 
$E_r$, $F_4$ or $G_2$, and $p \neq 5$ if $G$ is of type $E_8$).

Let $\widehat \gog =\gog[z,z^{-1}]\oplus \mk C \oplus \mk d$. The algebra structure on $\widehat \gog$ is defined by letting   $C$ be a central element, $d$ act on $\widehat \gog$ by $z(\mathrm d/\mathrm dz)$ and setting 
$$
[z^nx,z^my]=z^{n+m}[x,y]+n\delta_{n,-m}\kappa(x,y)C,
$$
where $\kappa(\cdot,\cdot)$ is the Killing form on $\g$.
We point out that the the Killing form can be degenerate in positive characteristic, so $\widehat\gog$ is not, strictly speaking, an affine Kac-Moody algebra. However we will use only the combinatorics of the Weyl group and of the affine root system and, in Lemma 5.1, the description of the Bruhat order on the affine flag variety (which is only determined by the loop group, and not by its central extension). Thus we will use the terminology of affine Kac-Moody Lie algebra even though it is slightly improper.

We denote by $\widehat G$ the Kac-Moody group associated to $\widehat\g$, and by $\widehat T=T \times \mk^*_C \times \mk^*_d$
the maximal torus of $\widehat G$ containing $T$ whose Lie algebra contains $C$ and $d$. 
Let  $\widehat \Lambda$ be the character lattice of $\widehat T$. It carries a $\widehat W$-invariant nondegenerate bilinear form.



Then the sets of roots $\Phi\subset \widehat\Phi\subset \widehat \Lambda$ are respectively the set of $\widehat T$-weights in $\gog$ and in $\widehat \gog$. We regard the root lattice as a partially ordered set with the dominance order, defined by $\grl\leq \mu$ if $\mu-\grl$ is a sum of simple roots with nonnegative coefficients.

Given $\gra\in \widehat \Phi$, let $\gog_\gra \subset \widehat \gog$ be the corresponding root space and, if $\gra$ is real, let $u_\gra$ be the corresponding one parameter subgroup of $\widehat G$. In particular, if $\gra \in \Phi$ and $n \in \mZ$, we have $\gog_{\gra + n\grd} = z^n\gog_\gra$ and $u_{\gra + n\grd}(x)=u_\gra(z^n x)$. Given a real root $\gra$, recall that we can construct 
representatives of the reflections in the Weyl group as follows
$$ s_\gra= u_{-\gra}(-1)\,u_\gra(1)\,u_{-\gra}(-1). $$
We will denote by $\gra^\vee$ the coroot associated to $\gra$ and by $\langle .,. \rangle$ the bilinear pairing between roots and coroots, which we will also extend to their linear span.  In particular, we have $\langle \gra + n\grd, (\grb + m\grd)^\vee \rangle = \langle \gra,\grb^\vee \rangle$ for all $\gra,\grb\in \Phi$.

We finally make some remarks about our assumptions on the characteristic of $\mk$.
In positive characteristic, quasi simple groups with the same isogeny type could have different Lie algebras. 
However  $\gou$, the Lie algebra of the  unipotent radical of $B$,  is independent from the isogeny type of $G$, 
and the action of $B$ on $\gou$ always factors through the adjoint form of the group. 

As already recalled, our assumption on the characteristic implies that, if $\gra,\grb\in \Phi$ and $\gra+\grb\in \Phi$, then $[e_\gra,e_\grb]\neq 0$. 
Since a $B$-stable abelian subalgebra of $\u$ is a sum of root spaces, it follows that the classification of these ideals is equivalent to that of the corresponding sets of roots. On the other hand, a subset $\Psi_0 \subset \Phi^+$ is the set of roots of a $B$-stable abelian subalgebra of $\u$ if and only if it has the following two properties:
\begin{itemize}
	\item[i)] If $\gra \in \Psi_0$, $\grb \in \Phi^+$ and $\gra \leq \grb \in \Phi^+$, then $\grb \in \Psi_0$,
	\item[ii)] If $\gra, \grb \in \Psi_0$, then $\gra + \grb \not \in \Phi^+$.
\end{itemize}
A subset $\Psi_0 \subset \Phi^+$ which satisfies i) is usually called a \textit{combinatorial ideal} in $\Phi^+$; if it also satisfies ii) we will call it a \textit{combinatorial abelian ideal}. The previous discussion immediately implies that the classification of the $B$-stable abelian subalgebras of $\u$ over $\mk$ is the same as that in characteristic zero. In particular, under our assumption on the characteristic, all the properties of a $B$-stable abelian subalgebra of $\u$ which can be encoded in terms of the corresponding combinatorial abelian ideal do not depend on the characteristic of $\mk$.

Notice also that, since the characteristic of $\mk$ is not very bad for $G$,
 if $\gra,\grb \in \Phi$ are not proportional and if $m \in \mN$ is maximal such that
$\gra+m\grb$ is a root, then there exist nonzero constants $c_1,\dots,c_m \in \mk^*$ such that
$$ u_\grb(t) \cdot  e_\gra = e_\gra + c_1\, t\, e_{\gra + \beta}+\dots+c_m\, t^m\,e_{\gra+m\grb} $$ 
for all $t \in \mk$ (see for example the construction of Chevalley groups in \cite[Sections 1,2,3]{Steinberg}).

Note that in positive characteristic we do not always have an exponential map. However such a map can be defined in special cases. Suppose for instance that $X \subset \widehat \Phi$ is a finite set of real roots such that $\gra + \grb \not \in \widehat \Phi \cup \{0\}$ for all $\gra,\grb\in X$, then we can define an exponential map
\[\exp_X :\bigoplus_{\gra \in X} \mk\,e_\gra \rightarrow \widehat G,
\qquad \qquad \sum_{\gra\in X} t_\gra\,e_\gra \mapsto  \prod_{\gra}u_\gra(t_\gra),
\]
which is well defined since the elements $u_\gra$ commute by hypothesis. Notice that the previous assumption is satisfied when $X = \wPsi(w)$ for some $w\in \widehat W_{\ab}$, or more generally if $X$ is conjugated to such a subset by the action of $\widehat W$. Moreover, in this case we can normalise the elements $e_\gra$ so that the map 
$$\exp_{\wPsi(w)}:\wgoa_w \ra \widehat G$$
becomes equivariant for the adjoint action of $B$ on $\wgoa_w$ and on $\widehat G$: again this follows from
the construction of Chevalley groups given in \cite{Steinberg}. In what follows we will usually drop the subscript $X$, which will be 
clear from the context. 

\subsection{Abelian ideals of $\b$ in positive characteristic}\label{cp}

When the characteristic of $\mk$ is zero, the following notions all coincide:
\begin{enumerate}
\item[(a)] $B$-stable abelian subalgebras of $\u$;

\item[(b)]  $B$-stable abelian subalgebras of $\b$;
 
\item[(c)]  Abelian ideals of $\b$ contained in $\u$;
 
\item[(d)]  Abelian ideals of $\b$.
\end{enumerate}
In positive characteristic, the previous notions coincide if $G$ is adjoint, and there are the obvious implications $(a) \Rightarrow (b),(c),(d)$; $(b) \Rightarrow (d)$; $(c) \Rightarrow (d)$. However, in general, they are not equivalent. 
In the present paper we will use the notion (a).

In this subsection we explain the differences between the previous notions which arise when the characteristic of $\mk$ is positive. We always assume that the characteristic of 
$\mk$ (here denoted by $p$) is not very bad for $G$, and that $\Phi$ is irreducible.

We explain first the relations between the $B$-stable abelian subalgebras/ideals of $\u$ and the $B$-stable abelian subalgebras/ideals of $\b$. Let $Q \subset \Lambda$ be the root lattice of $\Phi$. 
If $p$ divides the cardinality of $\Lambda/Q$, then $\got$ contains a nontrivial subspace $\goz_\t$ which is central in $\g$, whose dimension is at most 2.
The group $B$ acts trivially on $\goz_\got$ and every $B$-stable abelian subalgebra $\goi \subset \gob$ splits as $\goi = \goi_\t \oplus \goi_\u$,
where $\goi_\t \subset \goz_\t$ and $\goi_\u \subset \gou$ is a $B$-stable abelian subalgebra of $\u$; vice versa any $\goi$ constructed in this way is a $B$-stable abelian subalgebra of $\b$. Hence the classification of the $B$-stable abelian subalgebras of $\b$ and the study of their $B$-orbits is easily reduced to the case of the $B$-stable abelian subalgebras of $\u$. The same stament holds for abelian ideals of $\b$, with the exception of the case $G=\mathrm{SL}(2)$, $p=2$. If we are not in this case (and $p$ is not very bad for $G$),
then every abelian ideal $\goi$ of $\b$ splits as
$\goi = \goi_\t \oplus \goi_\u$
where $\goi_\t \subset \goz_\t$ and $\goi_\u \subset \gou$ is an abelian ideal of $\u$, 
vice versa any $\goi$ constructed in this way is an abelian ideal of $\b$.


We now come to the relations between $B$-stable abelian subalgebras of $\u$ and abelian ideals of $\b$ contained in $\u$. 
The two concepts are equivalent unless we are in one of the following cases:
\begin{enumerate}
\item $G = \mathrm{SL}(3)$, $p=3$;
\item $G = \mathrm{SO}(2n)$, $p=2$, and $n\geq 3$;
\item $G = \mathrm{Spin}(2n)$, $p=2$, and $n\geq 3$.
\end{enumerate}
In cases (2) and (3), notice that for $n\geq 4$ we obtain groups of type $D_n$, whereas for $n=3$ we obtain groups of type $A_3$: $\mathrm{SL}(4)$ and its degree two quotient.

To prove the equivalence between the two concepts if we are not in one of the previous cases, 
one can prove that for all $\alpha, \beta \in \Phi$ with $\alpha \neq \pm\beta$ we have that 
$\mathrm d \alpha \neq \mathrm d \beta$.  This implies that every abelian ideal 
$\goi$ of $\b$ contained in $\u$ splits as $\goi = \bigoplus_{\alpha \in \Phi} 
\goi_\alpha$, where $\goi_\alpha = \goi \cap \g_\gra$. Thus, if we are not in 
one of the exceptional cases listed above, the classification of the abelian 
ideals of $\b$ contained in $\u$ is equivalent to that of the combinatorial 
abelian ideals of $\Phi^+$, namely the abelian ideals of $\b$ contained in 
$\u$ are precisely the $B$-stable abelian subalgebras of $\gou$.

Finally in the three exceptional cases listed above we provide examples of abelian ideals $\goi$ of $\b$ contained in $\u$ which are not contained in any 
$B$-stable abelian subalgebra of $\b$. 
If $G=\mathrm{SL}(3)$ we denote by $e_{ij}$ the elementary matrices, if instead $G=\mathrm{SO}(2n)$ or $\mathrm{Spin}(2n)$ 
we describe roots with the usual $\gre$-notation, and for $p=2$ we normalise the root vectors $e_\gra$ so that 
$[e_\gra,e_\grb]=e_{\gra+\grb}$ if $\gra,\grb,\grg$ are roots.

In case (1),  we take $\goi = \mk(e_{12}+e_{23})\oplus  \mk e_{13}$.

In cases  (2) and (3)  set 
\begin{gather*}
\alpha= \gre_2-\gre_n,\quad \beta=\gre_2+\gre_n, \quad  \eta=\gre_1-\gre_n,\quad \xi=\gre_1+\gre_n,\\
\Psi=\{\gre_1+\gre_i, \, \gre_2+\gre_i \; | \;  i\leq n-1\}.
\end{gather*}
Then  $\goj=\bigoplus_{\psi\in \Psi} \gog_\psi$ is an abelian ideal of 
$\gob$ (in any characteristic), and we take $\goi = \goj \oplus \mk(e_\alpha+e_\beta)\oplus \mk(e_\eta+e_\xi)$. When $n=3$ and $G=\mathrm{SL}(4)$, the latter example can be rewritten as $\goi = \mk(e_{12}+e_{34})\oplus  \mk(e_{13}+e_{24})\oplus\mk e_{14}$.

\section{Minuscule elements in the affine Weyl group}\label{sez:minuscoli}


In this section we will study some combinatorial properties of the minuscule elements and their sets of inversions $\widehat \Psi(w)$, which closely generalize similar properties of the minimal length coset representatives $W^P \subset W$ associated to a parabolic subgroup $P \subset G$ with abelian unipotent radical (see \cite[Section 2]{GM}).

Let $w\in \Wab$. Since $\wPsi(w)+\grd$ is a combinatorial ideal in $\Phi^+$, the set of inversions $\wPsi(w)$ has the following property, that we will use very often:
\begin{equation} 	\label{eq:facile} 
\text{if } \gra \in \wPsi(w) \text{ and } \grb\in \wPsi \text{ satisfy } \gra\leq\grb, \text{ then } \grb\in \wPsi(w).
\end{equation}

Indeed in this case $\grb-\gra$ is a sum of simple roots in $\Delta$, which by definition are transformed into positive roots by $w$, thus $w(\grb)\geq w(\gra)>0$.

In particular, it follows that two roots $\alpha, \beta \in \widehat \Psi(w)$ are orthogonal if and only if they are strongly orthogonal (that is, $\gra \pm \grb \not \in \widehat \Phi \cup \{0\}$). Indeed, since $\widehat\Psi(w)$ is a subset of $\widehat \Psi$ closed under root addiction, the sum of two roots in $\widehat\Psi(w)$ is never a root. On the other hand, if $\gra , \grb \in \widehat \Phi$ are orthogonal and $\gra - \grb \in \widehat \Phi$, then $\gra + \grb = s_\grb(\gra - \grb)$ is a root as well.

The following useful property was noticed by Panyushev \cite[Lemma 1.2]{Pa5}. We provide a proof in a slightly different language.

\begin{lemma}\label{lemma:adding-roots}
Let $w\in \Wab$. Let $\gra, \grb \in \widehat\Psi(w)$ be orthogonal roots and let $\grg \in \Phi$ be such that $\gra + \grg \in \widehat\Psi(w)$, then $\grb + \grg \not \in \widehat\Psi(w)$. In particular, if $S\subset\widehat\Psi(w)$ is orthogonal and $\grg\in \Phi^+$, then there is at most one root $\a \in S$ such that $\a+\gamma\in \widehat \Phi$. 
\end{lemma}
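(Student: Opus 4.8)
The plan is to argue by contradiction, assuming both $\gra+\grg$ and $\grb+\grg$ lie in $\widehat\Psi(w)$. The first observation is that $\grg$ itself cannot be zero (since $\gra=\gra+\grg$ would already be in $\widehat\Psi(w)$ trivially, contradicting nothing — so really the point is to handle the substantive case $\grg\neq 0$); more importantly, since $\gra,\grb,\gra+\grg,\grb+\grg$ all lie in $\widehat\Psi(w)\subset\widehat\Psi=\Phi^+-\grd$, all four have the same $\grd$-coefficient $-1$, so $\grg\in\Phi$ has zero $\grd$-coefficient, which is consistent with the hypothesis $\grg\in\Phi$. Now I would exploit orthogonality: from $\langle\gra,\grb^\vee\rangle=0$ together with the fact that $\gra+\grg$ and $\grb+\grg$ are roots, I want to derive a relation forcing $\gra-\grb$ or $\gra+\grb$ to be a root, contradicting the remark (proved just before the lemma) that two orthogonal roots in $\widehat\Psi(w)$ are in fact strongly orthogonal.

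The cleaner route, which I would actually carry out, is to use the element $w$ directly. Apply $w$: since $\gra,\grb,\gra+\grg,\grb+\grg\in\widehat\Psi(w)$, all four of $w\gra,w\grb,w(\gra+\grg),w(\grb+\grg)$ are positive roots of $\widehat\Phi$. Consider the difference $\big(w(\gra+\grg)-w\gra\big)=\big(w(\grb+\grg)-w\grb\big)=w\grg$. Thus $w\grg$ is simultaneously the difference of two positive roots in two ways. Here is where I would invoke a rank-two / $\mathfrak{sl}_3$-type argument: look at the root subsystem spanned by $w\gra$ and $w\grb$ (which are strongly orthogonal, hence span an $A_1\times A_1$) together with $w\grg$. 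Since $w\gra+w\grg$ and $w\grb+w\grg$ are both roots, $w\grg$ has nonzero pairing with both $w\gra$ and $w\grb$ of the appropriate sign; in the span of $\{w\gra,w\grb,w\grg\}$ one then checks, using that $w\gra\perp w\grb$, that $w\gra+w\grb+w\grg$ or $w\gra+w\grb+2w\grg$ must be a root, and peeling off $w\grg$ shows $w\gra+w\grb$ (or a nearby vector) is a root — contradicting strong orthogonality of $w\gra,w\grb$. I expect the case analysis to be short because everything happens inside a root system of rank $\leq 3$ with two of the generators mutually orthogonal.

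For the "in particular" clause: suppose $S\subset\widehat\Psi(w)$ is orthogonal, $\grg\in\Phi^+$, and both $\a_1+\grg,\a_2+\grg\in\widehat\Phi$ for distinct $\a_1,\a_2\in S$. Since $\a_i\in\widehat\Psi(w)\subset\widehat\Psi$ and $\grg\in\Phi^+$, the root $\a_i+\grg$ has $\grd$-coefficient $-1$, so it lies in $\widehat\Psi$; and by \eqref{eq:facile} applied to $\a_i\leq\a_i+\grg$ (their difference $\grg$ being a nonnegative sum of simple roots in $\Delta$), we get $\a_i+\grg\in\widehat\Psi(w)$. Then the first part of the lemma, applied with $\gra=\a_1$, $\grb=\a_2$, gives $\a_2+\grg\notin\widehat\Psi(w)$, a contradiction. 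This reduces the clause to the main statement.

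The main obstacle is the rank-$\leq 3$ root-system computation in the middle paragraph: making the sign bookkeeping precise (which of $\pm w\grg$ can be added to $w\gra$, to $w\grb$) and checking that one genuinely lands on a root contradicting $w\gra\perp w\grb$. An alternative that avoids any explicit root-string analysis is to mimic Panyushev's original argument \cite[Lemma 1.2]{Pa5} purely on the weight side: use property \eqref{eq:facile} to see that $\gra+\grg,\grb+\grg\in\widehat\Psi(w)$ forces $\grg\in\Phi^+$ (otherwise $-\grg\in\Phi^+$ and then $\gra=(\gra+\grg)+(-\grg)\leq\gra$ gives no info — so one argues instead that $\gra+\grg\geq\gra$ or $\gra\geq\gra+\grg$ depending on the sign of $\grg$, and both $\gra$ and $\gra+\grg$ being in $\widehat\Psi(w)$ together with $\grb,\grb+\grg$ pins down the sign), and then derive a contradiction from $\gra+\grb+2\grg$ or $\gra+\grb$ being a root via $\gra\perp\grb$ and strong orthogonality. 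I would present whichever of these turns out shorter after working out the signs.
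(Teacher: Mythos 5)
Your reduction of the ``in particular'' clause to the main statement is correct and is exactly what the paper does. The core argument, however, has a genuine gap: the contradiction you aim for does not exist at the level of abstract root-system geometry. You claim that from $\alpha\perp\beta$ strongly orthogonal and $\alpha+\gamma$, $\beta+\gamma$ both roots, a rank-$\leq 3$ analysis produces a root such as $\alpha+\beta+\gamma$ from which ``peeling off $\gamma$'' contradicts strong orthogonality of $\alpha,\beta$. But this configuration is perfectly realizable: in $A_3$ take $\alpha=\varepsilon_1-\varepsilon_2$, $\beta=\varepsilon_3-\varepsilon_4$, $\gamma=\varepsilon_2-\varepsilon_3$; then $\alpha,\beta$ are strongly orthogonal, $\alpha+\gamma=\varepsilon_1-\varepsilon_3$ and $\beta+\gamma=\varepsilon_2-\varepsilon_4$ are roots, $\alpha+\beta+\gamma=\varepsilon_1-\varepsilon_4$ is a root, and yet $\alpha+\beta$ is not a root (a root minus a root need not be a root, and indeed $(\alpha+\beta+\gamma,\gamma)=0$ so no root-string argument applies). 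Applying $w$ changes nothing, since $w$ is an isometry of the root system. So no argument can force $\alpha\pm\beta$ to be a root from this data alone.

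The contradiction must instead come from the one piece of structure your plan never uses at the decisive step: since $\widehat\Psi(w)$ is closed under root addition and contained in $\Phi^+-\delta$, the sum of \emph{two elements of $\widehat\Psi(w)$ is never a root} (its $\delta$-coefficient would be $-2$). The paper's proof runs as follows: orthogonality within $\widehat\Psi(w)$ is equivalent to the difference not being a root, and since $(\alpha+\gamma)-(\beta+\gamma)=\alpha-\beta$, the roots $\alpha+\gamma$ and $\beta+\gamma$ are also orthogonal; expanding $\langle\alpha+\gamma,(\beta+\gamma)^\vee\rangle=0$ with $\langle\alpha,\beta^\vee\rangle=0$ forces $\langle\alpha,\gamma^\vee\rangle<0$ or $\langle\beta,\gamma^\vee\rangle<0$; in the first case $\langle\beta+\gamma,\alpha^\vee\rangle<0$, so $\alpha+(\beta+\gamma)$ is a root that is a sum of two elements of $\widehat\Psi(w)$ --- impossible. (In the $A_3$ example above this is precisely what rules the configuration out of any $\widehat\Psi(w)$.) Your alternative sketch at the end gestures at $\alpha+\beta+2\gamma=(\alpha+\gamma)+(\beta+\gamma)$, which is the right kind of object, but you again attribute the contradiction to strong orthogonality rather than to the closure property, and you do not establish that this vector is a root (in the example it is not). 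As written, the proposal does not close.
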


\begin{proof}
Assume that $\gra + \grg$, $\grb + \grg$ are both in $\widehat\Psi(w)$.
Since $w\in\Wab$,  we have that 
$$\langle \gra, \grb^\vee \rangle = 0 \iff \gra-\grb\notin \Phi \iff (\gra+\grg)-(\grb+\grg)\notin \Phi \iff \langle \gra+\grg, (\grb+\grg)^\vee \rangle = 0.$$

By the orthogonality of $\gra$ and $\grb$, the last equality implies that either  $\langle \gra, \grg^\vee \rangle <0$ or $\langle \grb, \grg^\vee \rangle <0$. Suppose that $\langle \gra, \grg^\vee \rangle <0$: then $\langle \grb+ \grg, \gra^\vee \rangle  < 0$, thus $\gra + \grb + \grg \in \widehat\Psi(w)$, contradicting $w\in \Wab$. Similarly, it cannot be $\langle \grb, \grg^\vee \rangle < 0$. 

Suppose now that $S \subset \widehat \Psi(w)$ is orthogonal, and let $\gra \in S$ and $\grg\in \Phi^+, \a\in S$ be such that $\a+\gamma \in \widehat \Phi$. Then $\a+\gamma\in \widehat\Psi(w)$, thus $\beta+\gamma \not \in \widehat \Phi$ for all $\beta\in S \smallsetminus \{\gra\}$.
\end{proof}

We now recall some facts about the correspondence between minuscule elements and $B$-stable abelian subalgebras of $\u$. By \cite{CP}, we have the following important characterization:
\begin{equation}	\label{eq:alcova}
	\Wab = \{w\in\Wa\mid w^{-1}\mathcal A\subset 2\mathcal A\},
\end{equation} 
where $\mathcal A$ denotes the fundamental alcove for $\Wa$ in the vector space generated by $\Phi^\vee$.

The following theorem, due to Panyushev, shows how to describe the normalizer of a $B$-stable abelian subalgebra of $\u$ (which is by definition a standard parabolic subgroup) in terms of the corresponding minuscule element.

\begin{theorem}[{\cite[Theorem 2.8]{Pa4}}] 	\label{teo:panyushev-normalizzatore}
Let $w \in \Wab$ and $\gra \in \grD$, then $\gra$ is a simple root of $N_G(\aa_w)$ if and only if $w(\gra) \in \widehat \grD$. 
\end{theorem}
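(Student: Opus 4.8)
The plan is to reduce the statement to a combinatorial condition on $\wPsi(w)$ and then prove the two implications separately. Since $\aa_w$ is $B$-stable, its normaliser $P:=N_G(\aa_w)$ is a standard parabolic subgroup, and a simple root $\gra\in\grD$ is a simple root of $P$ if and only if the representative $s_\gra\in N_G(T)$ normalises $\aa_w$. As $s_\gra$ permutes the root spaces of $\widehat\gog$ according to the reflection $s_\gra$ acting on roots, and $\aa_w=\bigoplus_{\grb\in\Psi(\aa_w)}\gog_\grb$ with $\Psi(\aa_w)=\wPsi(w)+\grd$, this happens precisely when $s_\gra\bigl(\Psi(\aa_w)\bigr)=\Psi(\aa_w)$; and since $s_\gra$ fixes $\grd$, that is in turn equivalent to $s_\gra\bigl(\wPsi(w)\bigr)=\wPsi(w)$ (this already forces $\gra\notin\Psi(\aa_w)$, since otherwise the left-hand side would contain $-\gra\notin\Phi^+$). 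So it suffices to show that, for $w\in\Wab$ and $\gra\in\grD$, one has $s_\gra(\wPsi(w))=\wPsi(w)$ if and only if $w(\gra)\in\widehat\grD$. I would also record at the outset that $w(\gra)\in\wPhi^+$ for every $w\in\Wab$ and $\gra\in\grD$: otherwise $-\gra\in\wPhi^-$ with $w(-\gra)\in\wPhi^+$, so $-\gra\in\wPsi(w)\subseteq\wPsi=\Phi^+-\grd$, which is impossible since $-\gra$ has $\grd$-coefficient $0$ (this also follows directly from \eqref{eq:alcova}).

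For the implication $(\Leftarrow)$, suppose $w(\gra)=\gra_j$ is an affine simple root. Given $\eta\in\wPsi(w)$, so that $\eta\in\wPhi^-$ and $w(\eta)\in\wPhi^+$, one has $s_\gra(\eta)\in\wPhi^-$ (the only root of $\wPhi^-$ carried out of $\wPhi^-$ by $s_\gra$ is $-\gra$, which does not belong to $\wPsi\supseteq\wPsi(w)$), and $w\bigl(s_\gra(\eta)\bigr)=s_{w(\gra)}\bigl(w(\eta)\bigr)=s_{\gra_j}\bigl(w(\eta)\bigr)\in\wPhi^+$, since $w(\eta)\in\wPhi^+$, $w(\eta)\neq\gra_j$ (because $\eta\neq w^{-1}(\gra_j)=\gra$, as $\eta\in\wPhi^-$ while $\gra\in\wPhi^+$), and $s_{\gra_j}$ permutes $\wPhi^+\smallsetminus\{\gra_j\}$. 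Hence $s_\gra(\wPsi(w))\subseteq\wPsi(w)$, and by involutivity equality holds, so $\gra$ is a simple root of $P$.

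For $(\Rightarrow)$ I argue by contradiction: assume $w(\gra)\in\wPhi^+$ is not simple and that $s_\gra$ preserves $\wPsi(w)$. As $w(\gra)$ is a non-simple positive root, I may choose an affine simple root $\gra_j$ with $c:=\langle\gra_j^\vee,w(\gra)\rangle>0$ and $w(\gra)\neq\gra_j$, and set $\mu:=s_{\gra_j}(w(\gra))=w(\gra)-c\,\gra_j\in\wPhi^+$ and $c':=\langle w(\gra)^\vee,\gra_j\rangle$ (also $\geq1$). Writing $\eta:=w^{-1}(\gra_j)$ and $\zeta:=w^{-1}(\mu)$ one has $\gra=\zeta+c\,\eta$; since $\gra$ is a simple root, exactly one of $\eta,\zeta$ lies in $\wPhi^-$. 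If $\eta\in\wPhi^-$, then $\eta\in\wPsi(w)$, while $w(s_\gra(\eta))=s_{w(\gra)}(\gra_j)=(1-cc')\gra_j-c'\mu$ has all of its coordinates in the affine simple roots $\leq0$, hence lies in $\wPhi^-$, so $s_\gra(\eta)\notin\wPsi(w)$: contradiction. If $\zeta\in\wPhi^-$, then $\zeta\in\wPsi(w)$ and $w(s_\gra(\zeta))=s_{w(\gra)}(\mu)=(cc'-1)\mu+(cc'-2)c\,\gra_j$, which for $cc'=1$ equals $-c\,\gra_j\in\wPhi^-$ and again gives the contradiction. When $cc'\geq2$ the reflection $s_\gra$ may fix $\zeta$, and one must instead feed to $s_\gra$ the root $\zeta+\gra$ (or $\zeta+2\gra$, in type $G_2$): since $\zeta\in\wPsi(w)$, $\zeta\leq\zeta+\gra$, and $\zeta+\gra\in\wPsi$ (i.e.\ $\zeta+\gra$ is a root, because $\wPsi=\Phi^+-\grd$), property \eqref{eq:facile} gives $\zeta+\gra\in\wPsi(w)$, whereas $w\bigl(s_\gra(\zeta+\gra)\bigr)=s_{w(\gra)}(\mu)-w(\gra)$ is readily seen, by a short check on $cc'\in\{2,3\}$, to lie in $\wPhi^-$, producing the contradiction.

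The delicate point, and the main obstacle, is precisely this non-simply-laced part of $(\Rightarrow)$: when $\langle w(\gra)^\vee,\gra_j\rangle\geq2$ the natural witness $\zeta$ is $s_\gra$-fixed, so one has to manufacture an element of $\wPsi(w)$ by adding finite simple roots to $\zeta$, and the crux becomes checking that the resulting element really lies in $\wPsi(w)$ — i.e.\ that $\zeta+\gra$ (resp.\ $\zeta+2\gra$) is actually a root — which is where the full combinatorics of $\wPsi(w)$, through its biclosedness and property \eqref{eq:facile}, has to be brought to bear; the rank-two subsystems of types $B_2$ and $G_2$ are the ones that require individual attention.
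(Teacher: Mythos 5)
Your proof is essentially correct, but note that the paper does not prove this statement at all: it is imported verbatim from Panyushev \cite[Theorem 2.8]{Pa4}, so there is no internal argument to compare against. What you supply is a self-contained combinatorial proof via the equivalence ``$\gra$ is a simple root of $N_G(\aa_w)$ $\iff$ $s_\gra(\wPsi(w))=\wPsi(w)$'' (which is correctly justified, since $P_\gra=B\cup Bs_\gra B$ and $B$ already normalises $\aa_w$), followed by a clean reflection computation; this makes the statement independent of \cite{Pa4} at the cost of the rank-two casework. The only soft spot is the step you yourself flag as the crux: in the case $\zeta\in\wPhi^-$ with $cc'\geq 2$ you assert, rather than prove, that $\zeta+\gra$ (resp.\ $\zeta+2\gra$) is a root. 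This does close, and more easily than you suggest: since $\zeta\in\wPsi(w)\subset\Phi^+-\grd$ has $\grd$-coefficient $-1$ and $\gra=\zeta+c\eta$ has $\grd$-coefficient $0$, comparing coefficients forces $c=1$, so $\eta\in\Phi^++\grd$ and the only configurations with $cc'\geq 2$ are $c=1$, $c'\in\{2,3\}$, i.e.\ $\gra_j$ long and $w(\gra)$, $\mu$ short with $\langle\mu,w(\gra)^\vee\rangle=2-cc'\in\{0,-1\}$; in a $B_2$ (resp.\ $G_2$) subsystem two short roots that are orthogonal (resp.\ at an obtuse angle) do sum to a root, so $w(\zeta+\gra)=\mu+w(\gra)$ (resp.\ also $\mu+2w(\gra)$) is a root, and your application of \eqref{eq:facile} together with $w(s_\gra(\zeta+\gra))=\mu-w(\gra)=-c\gra_j\in\wPhi^-$ (resp.\ $w(s_\gra(\zeta+2\gra))=-c\gra_j$) finishes the contradiction. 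I would recommend inserting the observation that $c=1$ whenever $\zeta\in\wPhi^-$, since it eliminates the sub-case $c=2$, $c'=1$ (where $\mu+w(\gra)$ would be twice a root and the argument would genuinely fail), and turns the ``short check'' into two lines.
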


A remarkable property of $\Wab$ is that the restriction of the Bruhat order coincide with that of the right weak order.

\begin{proposition} \label{prop:weak-iff-strong}
Let $w_1,w_2 \in \Wab$, then $w_1 \leq w_2$ if and only if $\widehat\Psi(w_1) \subset \widehat\Psi(w_2)$.
In particular,  $w_1\leq w_2$ if and only if $\aa_{w_1}\subset \aa_{w_2}$.
\end{proposition}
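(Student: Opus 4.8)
The plan is to show the two implications $\Rightarrow$ and $\Leftarrow$ of the first equivalence, and then deduce the second statement immediately from the bijection $w \mapsto \wgoa_w$, which translates $\wPsi(w_1) \subset \wPsi(w_2)$ into $\wgoa_{w_1} \subset \wgoa_{w_2}$, hence $\goa_{w_1} \subset \goa_{w_2}$. The direction $w_1 \leq w_2 \Rightarrow \wPsi(w_1) \subset \wPsi(w_2)$ is the easy half: the right weak order implies the Bruhat order in general, so I need the reverse, but the statement packaged here is really that on $\Wab$ the Bruhat order is \emph{no finer} than the weak order. Concretely I would argue this via the alcove characterization \eqref{eq:alcova}: if $w_1 \leq w_2$ in the Bruhat order then (a standard fact about the affine Weyl group acting on alcoves) the alcove $w_2^{-1}\calA$ is ``further out'' than $w_1^{-1}\calA$ in a sense compatible with the dominance order, and combined with $w_i^{-1}\calA \subset 2\calA$ this forces the inversion set containment. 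Alternatively, and perhaps more cleanly, I would use the fact that $\wPsi(w_i) + \grd$ is a combinatorial ideal in $\Phi^+$ together with property \eqref{eq:facile}: the inversion sets of minuscule elements are exactly the biclosed subsets of $\wPsi$, and for such ``upward closed'' sets the containment is detected by a single extremal root, which one can track under the Bruhat order.

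For the hard direction, $\wPsi(w_1) \subset \wPsi(w_2) \Rightarrow w_1 \leq w_2$, the natural approach is induction on $\ell(w_2) - \ell(w_1)$. The base case $\ell(w_2) = \ell(w_1)$ is trivial since then the inclusion of inversion sets is an equality, forcing $w_1 = w_2$. For the inductive step, the key claim I would isolate is: if $w_1, w_2 \in \Wab$ with $\wPsi(w_1) \subsetneq \wPsi(w_2)$, then there exists $\gra \in \wPsi(w_2) \smallsetminus \wPsi(w_1)$ such that $s_\gra w_2 \in \Wab$, $\ell(s_\gra w_2) = \ell(w_2) - 1$, and $\wPsi(w_1) \subset \wPsi(s_\gra w_2)$; then $w_1 \leq s_\gra w_2 < w_2$ by induction and transitivity. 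To find such an $\gra$, one wants a root $\gra$ that is ``extremal'' in $\wPsi(w_2)$ — minimal in the dominance order among roots of $\wPsi(w_2)$ not already in $\wPsi(w_1)$ — so that removing it keeps the set biclosed (hence keeps it the inversion set of a minuscule element by the bijection) and keeps it containing $\wPsi(w_1)$; the condition \eqref{eq:facile}, i.e.\ that $\wPsi(w_i) + \grd$ is a combinatorial ideal, is exactly what guarantees that such a minimal element can be removed while preserving the ideal/biclosed property. The length drop $\ell(s_\gra w_2) = \ell(w_2) - 1$ follows because $\wPsi(s_\gra w_2) = \wPsi(w_2) \smallsetminus \{\gra\}$ has one fewer element and the length of a minuscule element equals the cardinality of its inversion set (or one reads it off directly: $\gra$ being extremal means $w_2^{-1}(\gra)$ is a negative simple affine root, so right-multiplying by the corresponding simple reflection decreases length).

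I expect the main obstacle to be the verification that a dominance-minimal element $\gra$ of $\wPsi(w_2) \smallsetminus \wPsi(w_1)$ can be removed cleanly, i.e.\ that $\wPsi(w_2) \smallsetminus \{\gra\}$ is again biclosed (equivalently, again of the form $\wPsi(w)$ for some minuscule $w$) and that $w_2^{-1}(\gra)$ is genuinely a simple affine root rather than just some positive root made negative by $w_2^{-1}$. The biclosedness is not automatic — one must check both that the complement $\wPhi^- \smallsetminus (\wPsi(w_2)\smallsetminus\{\gra\})$ stays closed (here minimality of $\gra$ is used) and that the set itself stays closed (automatic since $\wPsi(w_2)$ was abelian, so no sums of its roots are roots anyway). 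For the simple-root claim, the cleanest route is to invoke that for $w \in \Wab$ the inversion set $\wPsi(w)$ carries a natural poset structure in which the covering relations correspond to left-multiplication by simple reflections of $\Wa$, which is the affine analogue of the structure on $W^P$ developed in \cite[Section 2]{GM}; a dominance-minimal inversion is then a minimal element of this poset and corresponds to a reduced word ending in a simple reflection. If a fully self-contained argument is wanted instead, I would fall back on \eqref{eq:alcova}: minimality of $\gra$ translates, via the correspondence between inversions and separating affine hyperplanes, into $w_2^{-1}(\gra)$ labelling a wall of $w_2^{-1}\calA$ adjacent to the one separating it from $w_1^{-1}\calA$, and reflecting in that wall both lands inside $2\calA$ (keeping minusculity) and moves toward $\calA$ (decreasing length).
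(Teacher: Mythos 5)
Your proposal inverts which half of the equivalence is difficult, and the genuinely hard half is left unproved. The implication $\wPsi(w_1)\subset\wPsi(w_2)\Rightarrow w_1\leq w_2$, which you call ``the hard direction'' and for which you build an induction on $\ell(w_2)-\ell(w_1)$, is immediate: containment of inversion sets is precisely the weak order on $\widehat W$, and the weak order is always refined by the Bruhat order; this is exactly the one-line argument the paper gives for this half. (Your induction is essentially workable --- a dominance-minimal element of $\wPsi(w_2)\smallsetminus\wPsi(w_1)$ is automatically minimal in all of $\wPsi(w_2)$ by \eqref{eq:facile}, so it can be peeled off as in Proposition \ref{prop:inversioni minimali e massimali} iii) --- but it only re-proves this standard fact in a special case.)

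The real content of the proposition is the converse: that a Bruhat relation $w_1\leq w_2$ between minuscule elements forces $\wPsi(w_1)\subset\wPsi(w_2)$, i.e.\ that on $\Wab$ the Bruhat order implies the weak order. For this you offer only two sketches, neither of which is an argument. The claim that $w_1\leq w_2$ in the Bruhat order makes $w_2^{-1}\calA$ ``further out'' than $w_1^{-1}\calA$ ``in a sense compatible with the dominance order'' is not a standard fact: the statement that every hyperplane separating $\calA$ from $w_1^{-1}\calA$ also separates $\calA$ from $w_2^{-1}\calA$ \emph{is} the weak-order relation, so assuming anything of this shape begs the question. Likewise ``the containment is detected by a single extremal root, which one can track under the Bruhat order'' does not explain how a Bruhat inequality controls any particular inversion. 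The paper's proof of this direction has two genuine steps that are absent from your proposal: (a) by the chain property, a saturated Bruhat chain from $w_1$ to $w_2$ can be chosen through minimal length coset representatives of $\widehat W/W$, and each term of the chain is shown to lie in $\Wab$ by a downward induction combining the alcove criterion \eqref{eq:alcova} with the hyperplane-separation characterization of length; (b) a Bruhat covering $w_1<w_2$ inside $\Wab$ with $\ell(w_2)=\ell(w_1)+1$ is shown to be a left weak-order covering, via a commutation argument on reduced expressions (forcing $\langle \gra_{i+1},\gra_i^\vee\rangle=0$), which yields $\wPsi(w_2)=\wPsi(w_1)\cup\{\grb\}$. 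Without something playing the role of (a) and (b), the proof is incomplete.
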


\begin{proof}
If $\widehat\Psi(w_1) \subset \widehat\Psi(w_2)$, then $w_1$ is less or equal than $w_2$ in the right weak Bruhat order on $\widehat W$, thus $w_1\leq  w_2$.

Assume vice versa that $w_1 <  w_2$. Recall that every minuscule element is a minimal length representative for its coset in $\Wa/W$. By the chain property (see \cite[Theorem 2.5.5.]{BB}), there exists a chain in $\widehat W$ 
$$w_1=v_1<v_2\ldots<v_k=w_2$$
such that, for every $i  >0$, the element $v_i$ has minimal length in its coset in $\Wa/W$ and $\ell(v_{i+1})=\ell(v_i)+1$.

We now show that $v_i \in \Wab$ for all $i$. Since $v_i < v_i s_\gra$ for all $\gra \in \Phi$, the alcoves $\calA$ and $v_i^{-1} \calA$ are never separated by an hyperplane of the shape $\langle \gra, x \rangle = 0$ with $\gra \in \Phi$ (see \cite[Theorem 4.5]{Hu}): thus $\langle \gra, x \rangle > 0$ for all $x \in v_i^{-1}\calA$ and for all $\gra \in \Phi^+$. Therefore, to show that $v_i \in \Wab$, by \eqref{eq:alcova} it only remains to prove that $\langle \theta, x \rangle < 2$ for all $x \in v_i^{-1} \calA$, where $\theta \in \Phi^+$ denotes the highest root. Proceeding inductively, we can assume that $v_{i+1} \in \Wab$.

Since $\ell(v_{i+1})=\ell(v_i) + 1$, we have   $v_i = v_{i+1} s_\grb$ for some real negative root $\grb \in \widehat \Phi$ such that $v_{i+1}(\grb) > 0$. Since $v_{i+1} \in \widehat W_\ab$,  it follows that $\grb \in \widehat \Psi(v_{i+1})$, thus $\grb = \gra - \delta$ for some $\alpha \in \Phi^+$.
In the vector space generated by the coroots, write $s_\grb(x) = x-( \langle \a, x \rangle -1) \, \a^\vee$ and let $H_\grb$ be the affine hyperplane defined by $s_\grb$. Since $v_{i+1}  s_\grb < v_{i+1}$, the hyperplane $H_\grb$ separates the alcoves $\mathcal A$ and $v_{i+1}^{-1}\mathcal A$ (see \cite[Theorem 4.5]{Hu}). On the other hand $H_\grb$ is defined by the equation $\langle \gra, x \rangle = 1$, thus $\langle \gra, x \rangle > 1$ for all $x \in v_{i+1}^{-1}\calA$. Since by assumption $v_{i+1} \in \widehat W_\ab$, by \eqref{eq:alcova} we have $v_{i+1}^{-1}\calA \subset 2 \calA$, hence $\langle \theta, x \rangle < 2$  for all $x \in v_{i+1}^{-1}\calA$. Let now $x \in v_i^{-1}\calA$ and set $y = s_\grb(x)$: then $y \in v_{i+1}^{-1}\calA$, thus
$$
	\langle \theta,x \rangle  =  \langle \theta, y \rangle - (\langle \gra, y \rangle-1) \langle \theta, \gra^\vee \rangle < \langle \theta, y \rangle < 2,
$$
which shows that $v_i \in \Wab$.
 
It remains to prove that $\widehat\Psi(w_1)\subset \widehat\Psi(w_2)$. Thanks to the argument above, we can assume that $\ell(w_2)=\ell(w_1)+1$. 
Thus they exist $v \in \widehat W$ and simple roots $\a_1, \ldots, \gra_{i+1} \in \widehat \grD$ such that $w_1 = s_{\a_1}\cdots s_{\gra_i} v$ and $w_2 =  s_{\a_1}\cdots s_{\gra_{i+1}} v$, with 
$\ell(w_1) = \ell(v)+i$ and $\ell(w_2)=\ell(v)+i+1$. Since $w_1, w_2$ are minuscule, the roots $v^{-1}(\alpha_i)$, $v^{-1}(\alpha_{i+1})$ and $v^{-1} s_{\alpha_{i+1}}(\alpha_i)$ all belong to $- \widehat \Psi$, thus there exist $\beta_1, \beta_2, \beta_3\in \Dp$ such that
$$v^{-1}(\a_i) = \d-\beta_1, \qquad v^{-1}(\a_{i+1}) = \d-\beta_2, \qquad v^{-1}s_{\alpha_{i+1}}(\a_i) = \d-\beta_3.$$
On the other hand, the equality
$$\d-\beta_3 = v^{-1}(\a_i - \langle \a_i, \a_{i+1}^\vee \rangle  \a_{i+1})  =
(\d-\beta_1) - \langle \a_i, \a_{i+1}^\vee \rangle  (\d-\beta_2),$$
forces $\langle \a_{i+1}, \a_i^\vee \rangle  = 0$: this shows that $s_{\gra_i}$ and $s_{\gra_{i+1}}$ commute, and repeating the argument we see that $w_2 = s_{i+1} w_1$.
\end{proof}

\begin{remark}
Notice that the previous proof also shows that $\Wab$ is downward closed in the set of the minimal length coset representatives for $\widehat W/W$ with the Bruhat order: that is, if $w_1 \in \widehat W$ has minimal length in its coset $w_1 W$ and if $w_1\leq w_2$ for some $w_2 \in \widehat W_\ab$, then $w_1 \in \widehat W_\ab$ as well (see \cite[Lemma 2.4]{Ku}).

Notice also that the statement of the previous proposition needs not to be true if we consider more generally minimal length coset representatives for $\widehat W/W$. Take for instance $\Phi$ of type $A_2$, $w_1 = s_1 s_0$ and $w_2 = s_1s_2s_0$.
\end{remark}

The following proposition describes the left descents of a minuscule element.

\begin{proposition}\label{prop:inversioni minimali e massimali}
Let $w\in \Wab$.
\begin{enumerate}[\noindent i)]
\item Let $\gra\in \wPhi^+$ be such that $s_\gra w<w$ and assume that $s_\gra w\in \widehat W_{\ab}$ and $\ell(s_\gra w)=\ell(w)-1$.  
Then $\gra \in \widehat \grD$. 

\item Let $\gra \in \widehat \Delta$ be such that $s_\gra w < w$, then $s_\gra w \in \Wab$. If moreover $\grb=w^{-1}(\gra)$, then $\grb$ is minimal in $\wPsi(w)$ and maximal in $\wPsi \smallsetminus \wPsi(s_\gra w)$, and we have the equality $\wPsi(w)=\wPsi(s_\gra w)\cup\{\grb\}$.

\item If $\grb$ is a minimal element in $\wPsi(w)$ and $\gra = w(\grb)$, then $\gra \in \widehat \grD$ and $s_\gra w<w$.
\end{enumerate}
\end{proposition}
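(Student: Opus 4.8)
\emph{Overall plan.} I would prove the three parts in the order (ii), (iii), (i). Part (iii) will follow from (ii) together with the correspondence between minuscule elements and combinatorial abelian ideals, and part (i) will be a quick consequence of (iii). The tool used throughout is that for every $v\in\Wab$ the inversion set $\wPsi(v)$ is upward closed in $\wPsi$, i.e. property \eqref{eq:facile}.

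\emph{Proof of (ii).} Assume $\gra\in\widehat\Delta$ and $s_\gra w<w$, and set $\grb=w^{-1}(\gra)$. Since $s_\gra w<w$ is equivalent to $w^{-1}(\gra)\in\wPhi^-$ and $w(\grb)=\gra\in\wPhi^+$, we have $\grb\in\wPsi(w)$. The heart of the matter is to compute $\wPsi(s_\gra w)$: for a real root $\grg\in\wPhi^-$ one has $(s_\gra w)(\grg)=s_\gra(w(\grg))$, and since $\gra$ is simple $s_\gra$ fixes $\wPhi^+\smallsetminus\{\gra\}$ and interchanges $\gra$ with $-\gra$; inspecting the sign of $w(\grg)$ and whether $w(\grg)=\pm\gra$ — the possibility $w(\grg)=-\gra$ being excluded because $-\grb\in\wPhi^+$ — one obtains
\[\wPsi(s_\gra w)=\wPsi(w)\smallsetminus\{\grb\}.\]
In particular $\wPsi(s_\gra w)\subset\wPsi(w)\subset\wPsi$, so $s_\gra w\in\Wab$. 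The remaining claims of (ii) are then formal: $\wPsi(s_\gra w)$ is upward closed by \eqref{eq:facile}, so were $\grb$ not minimal in $\wPsi(w)$ there would be $\grb'\in\wPsi(s_\gra w)$ with $\grb'<\grb$ and then \eqref{eq:facile} would return $\grb$ to $\wPsi(s_\gra w)$; dually, if $\grb<\grb'$ with $\grb'\in\wPsi\smallsetminus\wPsi(s_\gra w)$, then $\grb'\in\wPsi(w)$ by \eqref{eq:facile}, forcing $\grb'=\grb$, so $\grb$ is maximal in $\wPsi\smallsetminus\wPsi(s_\gra w)$.

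\emph{Proof of (iii) and (i).} For (iii), let $\grb$ be minimal in $\wPsi(w)$ and $\gra=w(\grb)$; as $\grb\in\wPhi^-$ we get $w^{-1}(\gra)=\grb<0$, hence $s_\gra w<w$, and only $\gra\in\widehat\Delta$ remains. Removing the minimal element $\grb$ from $\wPsi(w)$ leaves a set which is still upward closed in $\wPsi$ (by \eqref{eq:facile} and minimality) and in which no two elements sum to a root; hence $(\wPsi(w)\smallsetminus\{\grb\})+\grd$ is a combinatorial abelian ideal in $\Dp$ and therefore equals $\wPsi(w')+\grd$ for some $w'\in\Wab$. Then $\wPsi(w')\subsetneq\wPsi(w)$, so $w'<w$ with $\ell(w')=\ell(w)-1$ by Proposition \ref{prop:weak-iff-strong}, and (recalling from the proof of that proposition that a covering relation in $\Wab$ is realised by a left multiplication by a simple reflection) $w=s_{\gra_i}w'$ for some $\gra_i\in\widehat\Delta$; applying (ii) to $\gra_i$ gives $\wPsi(w)=\wPsi(w')\cup\{w^{-1}(\gra_i)\}$, whence $\grb=w^{-1}(\gra_i)$ and $\gra=w(\grb)=\gra_i\in\widehat\Delta$. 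For (i), let $\gra\in\wPhi^+$ with $v:=s_\gra w<w$, $v\in\Wab$, $\ell(v)=\ell(w)-1$. By Proposition \ref{prop:weak-iff-strong}, $\wPsi(v)\subset\wPsi(w)$, and since $|\wPsi(v)|=\ell(v)=\ell(w)-1=|\wPsi(w)|-1$ the difference $\wPsi(w)\smallsetminus\wPsi(v)$ is a single root; as $w^{-1}(\gra)\in\wPsi(w)$ while $v(w^{-1}(\gra))=s_\gra(\gra)=-\gra\in\wPhi^-$ gives $w^{-1}(\gra)\notin\wPsi(v)$, that root is $w^{-1}(\gra)$. Since $\wPsi(v)$ is upward closed, $w^{-1}(\gra)$ is minimal in $\wPsi(w)$, and (iii) gives $\gra=w(w^{-1}(\gra))\in\widehat\Delta$.

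\emph{Where the difficulty lies.} The only step demanding genuine care is the identity $\wPsi(s_\gra w)=\wPsi(w)\smallsetminus\{\grb\}$ in part (ii): it is an elementary but slightly fiddly case analysis on the action of $s_\gra$ on $\wPhi$, and it is the linchpin on which both (iii) and (i) rest — everything else being a formal consequence of it, of property \eqref{eq:facile}, and of Proposition \ref{prop:weak-iff-strong}. If one prefers to avoid invoking that a covering relation between minuscule elements comes from a simple reflection, part (iii) can instead be proved directly: assuming $\gra\notin\widehat\Delta$, write $\gra=\gra_i+\grg$ with $\gra_i\in\widehat\Delta$ and $\grg\in\wPhi^+$, and examine the signs of $w^{-1}(\gra_i)$ and $w^{-1}(\grg)$; in each case one either produces an element of $\wPsi(w)$ strictly below $\grb$ in the dominance order, contradicting minimality, or deduces that $\grb$ would have $\grd$-coefficient $-2$, contradicting $\grb\in\wPsi=\Dp-\grd$ — the subtlety being to dispose correctly of the case in which $\grg$ turns out to be imaginary, which is handled by a comparison of heights.
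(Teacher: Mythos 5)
Your proof is correct and follows essentially the same route as the paper's: both arguments rest on Proposition \ref{prop:weak-iff-strong}, on property \eqref{eq:facile}, and on the fact that $\wPsi(w)\smallsetminus\{\grb\}$ is again biclosed (hence the inversion set of a minuscule element) when $\grb$ is minimal. The only organisational difference is that you prove the parts in the order (ii), (iii), (i), deducing (i) from (iii) by a cardinality argument on inversion sets, whereas the paper proves (i) first by citing the standard fact that a weak-order cover is given by a simple reflection (\cite[Proposition 3.1.3]{BB}) and then uses (i) and (ii) to conclude (iii); both arrangements are valid.
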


\begin{proof}
i) By Proposition \ref{prop:weak-iff-strong} we have $\wPsi(s_\gra w) \subset  \wPsi(w)$, thus $\gra$ 
is simple by \cite[Proposition 3.1.3]{BB}.

ii) Since $\gra \in \widehat \grD$, it follows that $s_\gra w$ is smaller than $w$ with respect to the left weak Bruhat order. Thus $s_\gra w\in \widehat W_{\ab}$ because $w \in \widehat W_{\ab}$, and $\wPsi(w)=\wPsi(s_\gra w) \cup \{\grb\}$. 
The other claims follow from \eqref{eq:facile}.

iii) If $\grb$ is minimal in $\wPsi(w)$, then $\wPsi(w)\smallsetminus \{\grb\}$ is also biclosed, thus it must be equal to 
$\wPsi(w')$ for some $w'\in\Wab$. By Proposition \ref{prop:weak-iff-strong} we have $w'<w$. On the other hand $\ell(w') = \ell(w)-1$, thus it must be $w'=s_\gra w$ for some $\gra \in \widehat \Phi^+$ and we conclude thanks to i) and ii).
\end{proof}

\begin{remark}
Let $w \in \Wab$ and let $\grb \in \widehat \Psi \smallsetminus \widehat\Psi(w)$ be a maximal element, then $w(\grb)$ is not necessarily the opposite of a simple root. 
Take for instance $\Phi = A_3$, $w=s_1s_3s_0$ and  $\beta= \a_1 -\d$: then $\grb$ is maximal in $\widehat\Psi \smallsetminus \widehat\Psi(w) = 
\{\a_1 -\d, \a_2 -\d, \a_3-\d\}$, but $w(\beta)= - \a_1 -\a_2$. 

On the other hand, if there exists $v\in\Wab$ such that $w < v$ and $\grb$ is maximal in $\widehat\Psi(v)\smallsetminus \widehat\Psi(w)$, then $\a=-w(\beta)$ is a simple root and $s_\a w>w$. Indeed, it is enough to prove that $L={\widehat\Psi(w)\cup\{\grb\}}$ is biclosed. 
Since $L\subset {\wPsi(v)}$, It is clear that $L$ is closed under root addition. To check that $\wPhi^- \smallsetminus L$ is also closed under root addition, 
it is enough to check that, if $\grb = \grg+ \eta$ for some negative roots $\gamma, \eta$, then either $\gamma$ or $\eta$ belongs to $\wPsi(w)$.
We may assume that $\gamma= \grg_0-\d$ and $\eta\in\Phi^-$, thus $\grg\in \wPsi(v)$ since $\wPsi(v)$ is biclosed and $\eta \not \in \wPsi(v)$. Since $\grb=\grg+\eta$ we get then $\grb<\grg$, hence $\grg\in \wPsi(w)$  by the
maximality of $\grb$ in $\widehat\Psi(v)\smallsetminus \widehat\Psi(w)$.
\end{remark}

\section{Affine involutions associated to orthogonal sets of roots\\ and admissible pairs.}\label{sez:involuzioni}

Recall that for all finite set of pairwise orthogonal real roots $S \subset \widehat \Phi$ we defined an involution in $\widehat W$ by setting
$$ \grs_S = \prod_{\gra \in S} s_\a. $$

\begin{proposition}	\label{prop:radici-attive-reali}
Let $w\in\Wab$ and let $S \subset \widehat\Psi(w)$ be an orthogonal subset. 
Let $\gra \in \widehat \Phi$ be such that $\grs_S(\gra) = - \gra$, 
then $\a=\half(\pm\grb\pm\grb')$ with $\grb,\grb'\in S$. In particular:
\begin{enumerate}[\indent i)]
	\item If $\a\in \widehat\Psi$, then $\a=\half(\grb+\grb')$ with $\grb,\grb'\in S$. 
	\item If $\a\in \Phi$, then $\a=\half(\grb-\grb')$ with $\grb,\grb'\in S$.
	\end{enumerate}
\end{proposition}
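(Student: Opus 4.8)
The plan is to reduce the statement to the finite root system $\Phi$ and then combine a Cartan--integer identity with Lemma~\ref{lemma:adding-roots}. First, since the roots in $S$ are pairwise orthogonal for the $\Wa$-invariant form, the commuting reflections $s_\grb$ ($\grb\in S$) negate each $\grb$ and fix $\Span(S)^\perp$; hence $\grs_S$ is $-\mathrm{id}$ on $\Span(S)$ and $\mathrm{id}$ on its orthogonal complement, so $\grs_S(\a)=-\a$ forces $\a\in\Span_\mQ(S)$, and pairing with $\grb^\vee$ gives $\a=\half\sum_{\grb\in S}\langle\a,\grb^\vee\rangle\,\grb$. Discarding the $\grb\in S$ with $\langle\a,\grb^\vee\rangle=0$ (this changes neither $\grs_S(\a)$ nor the conclusion) we may assume $S=\{\grb_1,\dots,\grb_k\}$ with $c_i:=\langle\a,\grb_i^\vee\rangle\neq0$ for all $i$, so $2\a=\sum_i c_i\grb_i$. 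Writing $\grb_i=\grg_i-\grd$ with $\grg_i\in\Dp$, the $\grg_i$ are pairwise strongly orthogonal (recall that orthogonal roots inside $\wPsi(w)$ are strongly orthogonal), and passing to finite parts $2\grg=\sum_i c_i\grg_i$, where $\grg\in\Phi$ is the finite part of $\a$ and $c_i=\langle\grg,\grg_i^\vee\rangle$. Replacing $\a$ by $-\a$ if necessary, we may also assume $\grg\in\Dp$.

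Pairing $2\grg=\sum_i c_i\grg_i$ with $\grg^\vee$ gives the identity $4=\sum_i c_i\langle\grg_i,\grg^\vee\rangle$, a sum of $k$ positive integers, each summand $c_i\langle\grg_i,\grg^\vee\rangle=\langle\grg,\grg_i^\vee\rangle\langle\grg_i,\grg^\vee\rangle$ being $\leq 3$ unless $\grg=\pm\grg_i$ (in which case $k=1$ and $\a=\pm\grb_i$). The core of the argument is to show $k\leq 2$. Assume $k\geq 3$; then $k\in\{3,4\}$ and at least two summands equal $1$, so there exist $i\neq j$ with $c_i=\langle\grg_i,\grg^\vee\rangle=\pm1$, $(\grg_i,\grg_i)=(\grg,\grg)$, and similarly for $j$. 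As $|c_i|=1$, the $\grg_i$-string through $\grg$ is $\{\grg,\grg-c_i\grg_i\}$, so $\grg-c_i\grg_i\in\Phi$; since $\langle\grg-c_i\grg_i,\grg_j^\vee\rangle=c_j\neq0$ by strong orthogonality, also $\tau:=\grg-c_i\grg_i-c_j\grg_j\in\Phi$, and $\tau+c_i\grg_i=\grg-c_j\grg_j\in\Phi$, $\tau+c_j\grg_j=\grg-c_i\grg_i\in\Phi$, whence $\grg_i+c_i\tau,\grg_j+c_j\tau\in\Phi$ (roots are closed under negation). Iterating one-step string computations against the remaining $\grg_l$ and using strong orthogonality throughout, one extracts from this data a root $\eta\in\Dp$ with $\grg_i+\eta\in\Phi$ and $\grg_j+\eta\in\Phi$ (the natural candidates being $\pm\tau$ or $\pm(\tau-c_i\grg_i-c_j\grg_j)$, made positive by using the freedom to flip $\grg\leftrightarrow-\grg$ and to choose which two ``small'' indices $i,j$ to work with). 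Since $\grg_i-\grd,\grg_j-\grd\in\wPsi(w)$ and $\eta\in\Dp$, this means $(\grg_i-\grd)+\eta,(\grg_j-\grd)+\eta\in\wPhi$, contradicting Lemma~\ref{lemma:adding-roots}. Hence $k\leq 2$.

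Once $k\leq 2$ we have $2\grg=c_i\grg_i+c_j\grg_j$, allowing $c_j=0$. If $c_j=0$ then $2\grg=c_i\grg_i$ with both sides roots, so $\grg=\pm\grg_i$ and $\a=\pm\grb_i=\half(\pm\grb_i\pm\grb_i)$. If $c_i,c_j\neq0$, combining the identity $c_i\langle\grg_i,\grg^\vee\rangle+c_j\langle\grg_j,\grg^\vee\rangle=4$ with $c_i^2(\grg_i,\grg_i)+c_j^2(\grg_j,\grg_j)=4(\grg,\grg)$ and the fact that root-length ratios in $\Phi$ lie in $\{1,2,3\}$ forces $|c_i|=|c_j|=1$: a coefficient of absolute value $\geq 2$ would make either $\grg_i\pm\grg_j$ a root (contradicting strong orthogonality) or $2\mu$ a root for some $\mu\in\Phi$ (contradicting reducedness), the sole exception being the configuration with $\{|c_i|,|c_j|\}=\{1,3\}$, which can occur only when $\Phi$ is of type $G_2$ and is then ruled out by a direct check, since the strongly orthogonal pairs arising there are not the $T$-weights of any $B$-stable abelian subalgebra of $\gou$. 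Thus $\grg=\half(\pm\grg_i\pm\grg_j)$, and since $\grb_i$ and $\grb_j$ both have $\grd$-coefficient $-1$ this gives $\a=\half(\pm\grb_i\pm\grb_j)$, which is the main assertion.

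Finally, i) and ii) follow by comparing $\grd$-coefficients in $\a=\half(\pm\grb\pm\grb')$ with $\grb,\grb'\in S\subset\wPsi=\Dp-\grd$: if $\a\in\wPsi$ its $\grd$-coefficient is $-1$, which among the four sign choices forces $\a=\half(\grb+\grb')$; if $\a\in\Phi$ its $\grd$-coefficient is $0$, forcing $\a=\half(\grb-\grb')$. \emph{The main obstacle} is the step $k\leq 2$: constructing the positive root $\eta$ and verifying its positivity is delicate, and this is exactly where the hypothesis $S\subset\wPsi(w)$ (equivalently, that $\{\grg_i\}$ lies inside an abelian ideal) is indispensable --- without it the statement already fails for $\Phi$ of type $D_4$.
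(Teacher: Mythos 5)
Your reduction to the finite system, the identity $4=\sum_i c_i\langle\grg_i,\grg^\vee\rangle$, the $k\leq 2$ analysis, and the final $\grd$-coefficient comparison for i) and ii) are all reasonable in outline, but the proof has a genuine gap exactly where you flag it: the step $k\leq 2$. To apply Lemma \ref{lemma:adding-roots} you must produce a root $\eta\in\Dp$ (positivity is essential, since the lemma's uniqueness statement is proved by moving \emph{up} inside the ideal via \eqref{eq:facile}) with $\grg_i+\eta\in\Phi$ and $\grg_j+\eta\in\Phi$; you name candidates $\pm\tau$, $\pm(\tau-c_i\grg_i-c_j\grg_j)$ and appeal to ``the freedom to flip $\grg\leftrightarrow-\grg$,'' but flipping $\grg$ does not change which of the finitely many candidates are positive, and nothing in the argument forces any of them to be. The configuration of Remark \ref{ce} shows this is not a technicality: for $S=\{\varepsilon_1\pm\varepsilon_3-\grd,\ \varepsilon_2\pm\varepsilon_4-\grd\}$ in $D_4$ one checks that for \emph{every} pair $i\neq j$, every root $\eta$ with $\grg_i+\eta,\grg_j+\eta\in\Phi$ is negative. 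So strong orthogonality plus root-string computations alone can never yield the desired positive $\eta$; one must invoke the ideal/biclosedness structure of $\wPsi(w)$ (which is what actually excludes that $D_4$ configuration), and your argument never does so at this point. The case analysis for $k=2$ ruling out $|c_i|\geq 2$ is likewise only sketched, though that part looks repairable.

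For comparison, the paper takes a different and much shorter route: it passes to characteristic zero, forms the $\mathfrak{sl}_2$-triple $e=\sum_{\grb\in S}e_{\grd+\grb}$, $h=\sum_{\grb\in S}(\grd+\grb)^\vee$, $f=\sum_{\grb\in S}e_{-\grb-\grd}$, and uses the Panyushev--R\"ohrle result $\ad(e)^4=0$ (valid because $e$ lies in an abelian ideal) to bound $\sum_{\grb\in S}|\langle\gra,\grb^\vee\rangle|=\langle\grs_{S'}(\gra),h\rangle\leq 3$, where $S'=\{\grb\in S:\langle\gra,\grb^\vee\rangle<0\}$; the improvement from $3$ to $2$ is then a one-line parity argument, since $\sum_{\grb\in S}|\langle\gra,\grb^\vee\rangle|=-2\langle\grs_{S'}(\gra),d\rangle$ is even. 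If you want to salvage a purely combinatorial proof along your lines, you need to locate where the abelian-ideal property (not just strong orthogonality of $S$) forbids $k\geq 3$; as written, the proposal does not close this.
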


\begin{proof}
Since $\grs_S(\gra) = -\gra$, we have
\begin{equation}	\label{eq:2alpha}
2\gra = \sum_{\grb\in S}\langle \gra,\grb^\vee\rangle\grb.
\end{equation}
Thus the claim follows if we show that $\sum_{\beta\in S}|\langle \gra,\grb^\vee\rangle|\le 2$.

Since the statement is purely combinatorial, we can assume that the characteristic of $\mk$ is zero. Consider the $\mathfrak{sl}_2$-triple $\{e,h,f\}$ in $\gog$ defined by the elements
$$
	e=\sum_{\beta\in S} e_{\d+\beta}, \qquad h=\sum_{\beta\in S} (\d+\beta)^\vee, \qquad f=\sum_{\beta\in S} e_{-\beta-\d}.
$$

Since $e \in \aa_w$, by a result of Panyushev and R\"ohrle \cite[Proposition 2.2]{PR} (see also the proof of \cite[Theorem  2.3]{PR}) it holds $\ad(e)^4=0$. This implies that $\langle \eta, h \rangle \leq 3$ for all $\eta\in\Phi$: indeed, as easily follows from the theory of $\mathfrak{sl}_2$-representations, the greatest eigenvalue of $\ad(h)$ on $\gog$ equals the greatest $n \in \mN$ such that $\ad(e)^n \neq 0$.

Denote $S'=\{\grb\in S\mid \langle \gra,\grb^\vee\rangle< 0\}$, then taking $\eta = \grs_{S'}(\gra)$ we get 
$$
	\sum_{\beta\in S}|\langle \gra,\grb^\vee\rangle| = \sum_{\grb\in S}\langle \grs_{S'}(\gra),\grb^\vee\rangle\le 3.
$$
On the other hand, by \eqref{eq:2alpha} we have
$$
	\grs_{S'}(\alpha) = \gra - \sum_{\grb \in S'} \langle \gra,\grb^\vee\rangle\grb =  \frac{1}{2}\sum_{\grb\in S}|\langle \gra,\grb^\vee\rangle| \grb.
$$
Therefore $\sum_{\beta\in S}|\langle \gra,\grb^\vee\rangle| = - 2 \langle \grs_{S'}(\gra), d \rangle$ is an even number, and the first claim is proved.

Suppose now that $\a=\half(\pm\grb\pm\grb')$ with $\grb, \grb' \in S$, and assume that $\gra\in\widehat\Psi$: then $-1= \langle \gra, d \rangle =\half(\pm1\pm1)$, thus $\a=\half(\grb+\grb')$. 
Similarly, if $\gra\in\Phi$, then $0=\langle \gra, d \rangle$, thus $\a=\pm\half(\grb-\grb')$.
\end{proof}

\begin{remark}\label{ce}
Proposition 
\ref{prop:radici-attive-reali} is in general false if we do not assume  
$S\subset \widehat\Psi(w)$ for some $w\in\Wab$. Take for example 
$\Phi = D_4$, 
$S=\{\varepsilon_1+\varepsilon_3-\d,\varepsilon_1-\varepsilon_3-\d,\varepsilon_2+\varepsilon_4-\grd,\varepsilon_2-\varepsilon_4-\grd \}$ and 
$\a=\varepsilon_1+\varepsilon_2-2\grd=\frac{1}{2}\sum_{\beta\in S}\beta$. 
\end{remark}

\begin{proposition}	\label{prop:iniettivita-involuzioni}
Let $w\in\Wab$ and let $S \subset \widehat\Psi(w)$ be an orthogonal subset.  If $S'\subset \widehat\Psi$ is an orthogonal subset such that $\grs_{S'} = \grs_S$, then $S' = S$.
\end{proposition}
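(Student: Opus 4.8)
\emph{Proof plan.}

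The plan is to reconstruct $S$ from purely linear-algebraic data attached to the involution $\grs:=\grs_S=\grs_{S'}$. First I would record the elementary identity
$\grs_T(x)=x-\sum_{\grg\in T}\langle x,\grg^\vee\rangle\,\grg$,
valid for any orthogonal set $T$ of real roots and any $x$ in the $\mQ$-span of $\widehat\Lambda$; this is a one-line induction using that $s_{\grg'}$ fixes every root of $T$ orthogonal to $\grg'$, and for $T=S$, $x=\gra$ it is precisely the content of \eqref{eq:2alpha}. It follows at once that the $(-1)$-eigenspace of $\grs_T$ is exactly $\Span_{\mQ}(T)$, so from $\grs_S=\grs_{S'}$ I get $\Span_{\mQ}(S)=\Span_{\mQ}(S')$. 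Since every root of $\widehat\Psi$ has positive norm for the $\widehat W$-invariant form (a root $\grg-\grd$ has the same norm $(\grg,\grg)>0$ as the finite root $\grg$), pairwise orthogonal such roots are linearly independent; hence $\card(S)=\dim\Span_{\mQ}(S)=\dim\Span_{\mQ}(S')=\card(S')$.

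Next I would pin down the shape of the elements of $S'$. For $\gra\in S'$ we have $\grs_{S'}(\gra)=-\gra$ because $S'$ is orthogonal, hence $\grs_S(\gra)=-\gra$; since $\gra\in\widehat\Psi$, Proposition~\ref{prop:radici-attive-reali}(i) furnishes (possibly equal) roots $\grb_\gra,\grb'_\gra\in S$ with $\gra=\half(\grb_\gra+\grb'_\gra)$. Put $T_\gra=\{\grb_\gra,\grb'_\gra\}\subseteq S$, a set of size $1$ or $2$. The heart of the argument, and the only spot where a genuine computation is needed, is to show that the sets $T_\gra$ ($\gra\in S'$) are pairwise disjoint. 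I would deduce this from the orthogonality of $S'$: if $\gra\ne\gra'$ in $S'$ shared an element $\mu\in T_\gra\cap T_{\gra'}$, then expanding $(\gra,\gra')=\tfrac14(\grb_\gra+\grb'_\gra,\grb_{\gra'}+\grb'_{\gra'})$ into its four terms, each term $(\eta,\eta')$ with $\eta,\eta'\in S$ vanishes unless $\eta=\eta'$, in which case it equals $(\eta,\eta)>0$; the term with $\eta=\eta'=\mu$ is of this second kind, forcing $(\gra,\gra')>0$, a contradiction.

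With disjointness established, the $T_\gra$ are disjoint subsets of $S$, so $\sum_{\gra\in S'}\card(T_\gra)\le\card(S)=\card(S')=\sum_{\gra\in S'}1$; since each $\card(T_\gra)\ge1$ this forces $\card(T_\gra)=1$ for every $\gra$, i.e. $\grb_\gra=\grb'_\gra$ and $\gra=\grb_\gra\in S$. Hence $S'\subseteq S$, and equality of cardinalities yields $S'=S$. I would finally note that the hypothesis $S\subseteq\widehat\Psi(w)$ for a minuscule $w$ enters only through Proposition~\ref{prop:radici-attive-reali} — which by Remark~\ref{ce} genuinely fails without it — whereas $S'$ is assumed merely to be an orthogonal subset of $\widehat\Psi$; the argument is deliberately not symmetric in $S$ and $S'$.
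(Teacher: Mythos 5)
Your proof is correct and follows essentially the same route as the paper's: both rest on Proposition~\ref{prop:radici-attive-reali}(i) to write each element of $S'$ as a half-sum of (possibly equal) elements of $S$, on the equality $\Span_\mQ S=\Span_\mQ S'$ coming from the $(-1)$-eigenspace, and on the fact that orthogonality of $S'$ plus positivity of the norms of the roots in $S$ forces the resulting supports in $S$ to be disjoint singletons. The paper merely packages the last step as a span-dimension contradiction centred on a single element of $S'\smallsetminus S$ instead of your direct disjoint-support cardinality count, but the underlying computation is identical.
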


\begin{proof}If $\a\in S'$, then 
$\grs_{S'}(\gra)=\grs_S(\gra)=-\gra$, hence $2\gra = \sum_{\grb\in S}\langle \gra,\grb^\vee\rangle\grb$ and
$\gra\in \Span_\mQ S$. Therefore $S'\subset \Span_\mQ S$, and switching the role of $S$ and 
$S'$ we get $ \Span_\mQ S'= \Span_\mQ S$. Assume now that $S'\ne S$ and set $S=\{\grb_1,\ldots,\grb_k\}$. 
Let $\gra\in S' \smallsetminus S$, then by Proposition \ref{prop:radici-attive-reali} we can write $\gra=\tfrac 12 (\grb+\grb')$ with $\grb,\grb'$ in $S$. Without loss of generality we can assume that $\gra=\tfrac 12 (\grb_1+\grb_2)$. 
Let $\gra'\in S'\smallsetminus \{\gra\}$, and write $\gra'=\sum a_i\grb_i$. Since $\gra$ and $\gra'$ are orthogonal, it follows that $a_1\Vert \grb_1\Vert^2 +a_2\Vert\grb_2\Vert^2=0$, 
hence $a_1=-\frac{ \Vert\grb_2\Vert^2}{ \Vert \grb_1\Vert^2} a_2$. On the other hand, by Proposition \ref{prop:radici-attive-reali} we can write $\gra'=\tfrac 12(\grb_i+\grb_j)$ for some $\grb_i, \grb_j \in S$, and since $\gra' \neq \gra$ we get $a_1=a_2=0$. It follows that $S'\smallsetminus \{\gra\}\subset \Span_\mQ(S\smallsetminus\{\grb_1,\grb_2\})$, against the fact that $S$ and 
$S'$ span the same space.
\end{proof}

\begin{remark}\label{nls}
Notice that the claim of Proposition \ref{prop:iniettivita-involuzioni} 
is false if we consider any pair of  strongly orthogonal subsets 
$S,S' \subset \widehat \Psi$. Take for example $\Phi = D_4$, $S$ as in Remark \ref{ce} and $S'=\{\varepsilon_1+\varepsilon_4-\d,\varepsilon_1-\varepsilon_4-\d, \varepsilon_2+\varepsilon_3-\grd, \varepsilon_2-\varepsilon_3-\grd \}$, then $\grs_{S'} = \grs_S$.
\end{remark}

The following definitions are adapted from Richardson and Springer \cite{RS1}.

\begin{definition}	\label{def:discese}
Let $\grs \in \widehat W$ be an involution and $\gra \in \widehat \Delta$.

We say that $\gra$ is a \textit{descent} for $\grs$ if  $\grs(\gra) < 0$. 
If moreover $\grs(\gra) = - \gra$, then we say that $\gra$ is a \textit{real descent}, otherwise we say that $\gra$ is a \textit{complex descent}.

We also define
$$
s_\gra \circ \grs=
\begin{cases}
 s_\gra \grs &\text{if } s_\gra \grs = \grs s_\gra \\
 s_\gra \grs s_\gra &\text{if } s_\gra \grs \neq \grs s_\gra
\end{cases}
$$

Finally we define the \emph{length} of $\grs$ as 
$$
L(\grs)=\frac{\ell(\grs)+\rk (\id-\grs)}{2}.
$$
\end{definition}

Notice that the length of an involution is a natural number. Indeed $\rk (\id-\grs)$ equals the multiplicity of $-1$ as an eigenvalue for $\grs$,
thus $\det(\grs) = (-1)^{\ell(\grs)} = (-1)^{\rk (\id-\grs)}$, and the parity of $\ell(\grs)$ equals that of $\rk (\id-\grs)$.

If $\grs$ is an involution, notice that $s_\gra \circ \sigma$ is an involution as well, and that $s_\gra \circ \grs = \tau$ if and only if $s_\gra \circ \tau = \grs$. If moreover $\grs=\grs_S$ for some finite orthogonal set of real roots $S$, then $L(\grs_S)=\tfrac 12 (\ell(\grs_S)+\card S)$.

In the next two lemmas we recall some simple results of Richardson and Springer from \cite{RS1,RS2}, see also \cite{GM} for more direct proofs. First we characterize the descents of an involution.

\begin{lemma} \label{lemma:discese-CR}
Let $\grs \in \widehat W$ be an involution and let $\gra\in \widehat\Delta$, then the following statements are equivalent: 
\begin{enumerate}[\indent i)]
        \item $\gra$ is a descent for $\grs$; 
        \item $s_\gra\grs<\grs$;
        \item $\grs\,s_\gra<\grs$;
 	\item $s_\gra\circ \grs <\grs$; 
        \item $L(s_\gra\circ \grs)=L(\grs)-1$.
\end{enumerate}
If moreover $\alpha$ is a descent for $\grs$, then it is real if and only if $s_\gra\,\grs=\grs \, s_\gra$, and it is complex if and only if
$s_\gra\grs\,s_\gra<s_\gra \grs$ and $s_\gra\grs\,s_\gra<\grs \,s_\gra $. 
\end{lemma}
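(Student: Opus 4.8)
The statement collects standard facts about involutions in Coxeter groups, so the plan is to reduce everything to basic properties of the Bruhat order and length function and to the behaviour of conjugation by simple reflections. First I would set up notation: write $s = s_\gra$ and recall the exchange/deletion properties of $\widehat W$, the fact that for any $u \in \widehat W$ we have $su < u \iff \ell(su) = \ell(u) - 1 \iff u^{-1}(\gra) < 0$, and that $\gra$ is a descent for $\grs$ means $\grs(\gra) < 0$, i.e.\ (since $\grs = \grs^{-1}$) exactly $s\grs < \grs$. That already gives $i) \Leftrightarrow ii)$ immediately, and $i) \Leftrightarrow iii)$ follows by taking inverses, using $\ell(\grs s) = \ell(s\grs^{-1}) = \ell(s\grs)$ together with the characterization $\grs s < \grs \iff \grs(\gra) < 0$.

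The core of the argument is the equivalence with $iv)$ and $v)$ and the real/complex dichotomy, which splits into two cases according to whether $s\grs = \grs s$ or not. In the commuting case, $s \circ \grs = s\grs$; assuming $\gra$ is a descent, $s\grs < \grs$ by $ii)$, which is exactly $iv)$. For $v)$, I would compute directly: $\ell(s\grs) = \ell(\grs) - 1$, and since $s\grs$ is again an involution with $s\grs(\gra) = \grs s(\gra) = -\grs(\gra) > 0$, the eigenvalue $-1$-multiplicities of $\grs$ and $s\grs$ differ by exactly one (the plane spanned by $\gra$ and $\grs(\gra)$ collapses appropriately — here one uses that $s$ and $\grs$ commute, so they are simultaneously ``diagonalizable'' over the reflection representation and $s$ simply flips the sign of the $\grs$-eigenvalue on $\mk\gra$), so $\rk(\id - s\grs) = \rk(\id-\grs) - 1$ and hence $L(s\circ\grs) = L(\grs) - 1$. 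In the non-commuting case, $s\circ\grs = s\grs s$, which is an involution since $\grs$ is; I would show $\ell(s\grs s) = \ell(\grs) - 2$ using the descent hypothesis: $\gra$ is a descent on the left ($s\grs < \grs$) and, since $s\grs \neq \grs s$, one has $\grs(\gra) \neq -\gra$, so $s\grs(\gra) = \grs(\gra) - \langle\gra,\grs(\gra)^\vee\rangle\gra$ is still negative, giving $s\grs s < s\grs$; thus $\ell(s\grs s) = \ell(\grs) - 2$. The $-1$-eigenspace is unchanged under conjugation, so $\rk(\id - s\grs s) = \rk(\id-\grs)$, whence $L(s\circ\grs) = L(\grs) - 1$, proving $iv)$ and $v)$. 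The reverse implications $iv) \Rightarrow i)$ and $v) \Rightarrow i)$ follow because if $\gra$ is not a descent, the same computations show $L(s\circ\grs) = L(\grs)+1$, so the conditions are genuinely equivalent.

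For the final sentence, the real/complex labelling is by definition: $\gra$ real means $\grs(\gra) = -\gra$, which forces $s\grs = \grs s$ (both send $\gra \mapsto$ fixed behaviour and agree on $\gra^\perp$), and conversely $s\grs = \grs s$ applied to $\gra$ gives $\grs(\gra) = \grs s(\gra)^{\text{...}}$, i.e.\ $s\grs(\gra) = \grs s(\gra) = -\grs(\gra)$, and combined with $s\grs < \grs$ this pins down $\grs(\gra) = -\gra$; so real $\iff s\grs = \grs s$. In the complex case $s\grs \neq \grs s$, and the length count above already shows $s\grs s < s\grs$ and (by the symmetric argument on the right, i.e.\ $\grs s(\gra) < 0$ as well when $\gra$ is a descent) $s\grs s < \grs s$, which is the asserted characterization. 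The main obstacle I anticipate is the bookkeeping for $\rk(\id - s\circ\grs)$ in the commuting real case — one must argue carefully that flipping the sign of $\grs$ on the line $\mk\gra$ changes the $-1$-multiplicity by exactly one and not by more, which is where the hypothesis that $s$ and $\grs$ commute is essential; everything else is routine manipulation of lengths and the Bruhat order, for which I would cite \cite{BB} or defer to the more explicit treatment in \cite{GM} and the original sources \cite{RS1,RS2}.
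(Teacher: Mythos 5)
Your argument is correct and matches the paper's in substance: the paper simply cites \cite[Section 3]{GM} (ultimately \cite{RS1}) for the equivalences i)--v) and for the complex-descent characterization, and only writes out the one claim not covered there, namely that a descent is real if and only if $s_\gra$ and $\grs$ commute --- which it proves exactly as you do, via $s_\gra=\grs s_\gra\grs=s_{\grs(\gra)}$ together with $\grs(\gra)<0$. Your explicit length and $\rk(\id-\grs)$ bookkeeping for $L(s_\gra\circ\grs)$ (eigenspace decomposition in the commuting case, conjugation-invariance of the $-1$-eigenspace in the non-commuting case) is precisely the standard argument underlying the cited facts and is sound.
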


\begin{proof}
The only thing that it is not proved in \cite[Section 3]{GM} is that, if $\gra$ is a descent for $\grs$, then it is real
if and only if $s_\gra$ and $\grs$ commute. On the other hand, if they commute, then $s_\gra=\grs s_\gra \grs=s_{\grs(\gra)}$, and being $\grs(\gra)<0$ we deduce $\grs(\gra)=-\gra$.
\end{proof}

\begin{lemma} [{see \cite[Lemma 3.2]{GM}}] \label{lemma:par}
Let $\gra\in \widehat \Delta$ and let $\grs, \tau \in \widehat W$ be involutions such that $\grs < \tau$. Then the following hold:
\begin{enumerate}[\indent i)]
 \item  if $s_\gra \circ \grs > \grs$ and $s_\gra \circ \tau > \tau$, then $s_\gra \circ \grs < s_\gra \circ \tau$;
 \item  if $s_\gra \circ \grs < \grs$ and $s_\gra \circ \tau < \tau$, then $s_\gra \circ \grs < s_\gra \circ \tau$; 
 \item  if $s_\gra \circ \grs > \grs$ and $s_\gra \circ \tau < \tau$, then $s_\gra \circ \grs \leq \tau$ and $\grs \leq s_\gra \circ \tau$.
\end{enumerate}
\end{lemma}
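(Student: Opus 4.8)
The plan is to deduce all three statements from the \emph{lifting property} of the Bruhat order on $\widehat W$ (see \cite[Proposition~2.2.7]{BB}), applied on both the left and the right, together with two facts already available: the identity $s_\gra \circ (s_\gra \circ \grs) = \grs$ (stated just before Lemma~\ref{lemma:discese-CR}) and Lemma~\ref{lemma:discese-CR} itself, which tells us that if $\gra$ is a descent of an involution $\pi$ then $s_\gra \pi < \pi$ and $\pi s_\gra < \pi$, that $s_\gra \circ \pi$ equals $s_\gra \pi = \pi s_\gra$ in the real case and $s_\gra \pi s_\gra$ in the complex case, and dually for ascents. Throughout I will write $s = s_\gra$, $\grs' = s \circ \grs$ and $\tau' = s \circ \tau$. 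A preliminary remark used in all three parts: since $s \circ \grs' = \grs$ and $s \circ \tau' = \tau$, the assumption $\grs < \tau$ forces $\grs' \neq \tau'$, so in parts (i) and (ii) it suffices to produce a non-strict inequality between $\grs'$ and $\tau'$.

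I would treat (iii) first, as it is the model for the others. Here $\gra$ is a descent of $\tau$, so $s\tau < \tau$, and the lifting property applied to $\grs < \tau$ gives $s\grs \leq \tau$ and $\grs \leq s\tau$. If $\gra$ is a complex descent of $\tau$ then $\tau' = s\tau s$ and $\tau' < s\tau$, so a right-hand application of the lifting property to $\grs \leq s\tau$ yields $\grs \leq \tau'$; if $\gra$ is a real descent then $\tau' = s\tau$ and $\grs \leq \tau'$ already. For the other inequality, if $s$ commutes with $\grs$ then $\grs' = s\grs \leq \tau$; otherwise $\grs' = s\grs s$, and since $\tau s < \tau$ (again because $\gra$ is a descent of $\tau$) a right-hand application of the lifting property to $s\grs \leq \tau$ gives $s\grs s \leq \tau$. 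This proves (iii).

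For (i), $\gra$ is an ascent of both $\grs$ and $\tau$, so $\gra$ is a descent of $\grs'$ and of $\tau'$ by the identity above; in particular $s\tau' < \tau'$. From $\grs < \tau < \tau'$ and $s\tau' < \tau'$ the lifting property gives $s\grs \leq \tau'$, and exactly as in (iii) one upgrades this to $\grs' \leq \tau'$ (using $\tau' s < \tau'$ when $s$ does not commute with $\grs$); the preliminary remark then gives $\grs' < \tau'$. Part (ii) is handled by the same mechanism but with four sub-cases, according to whether $s$ commutes with $\grs$ and with $\tau$. In each sub-case I would first use that $\gra$ is a descent of both $\grs$ and $\tau$ to get $s\grs < s\tau$ (or $\grs s < \tau s$) from the lifting property, and then transport this inequality by multiplying by $s$ on the appropriate side; each such step is again an instance of the lifting property --- now in the ``one element goes down, the other is fixed or goes up'' form --- with the necessary one-sided length comparisons supplied by Lemma~\ref{lemma:discese-CR}. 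The final $\leq$ is upgraded to $<$ by the preliminary remark.

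The only delicate point, and the part I expect to require care, is precisely this bookkeeping whenever a complex descent forces $s \circ \pi = s \pi s$: there one must keep track of which of the one-sided multiples $s\grs$, $\grs s$, $s\tau$, $\tau s$ increase and which decrease in length, in order to invoke the correct form of the lifting property. All of this information is, however, exactly what Lemma~\ref{lemma:discese-CR} records, so the argument is a (somewhat lengthy) case analysis rather than anything conceptually hard.
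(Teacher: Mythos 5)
Your argument is correct: each of the three parts follows, as you say, from the one-sided lifting property of the Bruhat order combined with the length/commutation information in Lemma~\ref{lemma:discese-CR}, and I checked that every invocation of the lifting property in your case analysis is of a legitimate form (in particular, the ``upgrade'' steps $s\grs\leq\tau \Rightarrow s\grs s\leq\tau$ when $\tau s<\tau$ hold regardless of whether $\grs s$ goes up or down, and the final strict inequalities in (i) and (ii) do follow from your preliminary remark that $s_\gra\circ(-)$ is injective on involutions). Note, however, that the paper does not prove this lemma at all: it is recalled from Richardson and Springer, with the citation to \cite[Lemma 3.2]{GM} standing in for a direct proof. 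Your lifting-property argument is essentially that standard elementary proof, so you have supplied in full a verification that the paper deliberately outsources; nothing in your write-up conflicts with how the lemma is used later.
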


We now study the descents of the involutions of the form $\grs_S$, with $S$ an orthogonal set of roots contained in $\wPsi(w)$ for some $w\in \Wab$.

\begin{proposition}\label{prop:sbetaammissible}
Let $w \in \Wab$, let $S\subset \widehat\Psi(w)$ be an orthogonal subset and let $\gra\in\grD$ be a descent for $\grs_S$. 
Then $s_\gra(S)\subset \widehat\Psi(w)$, and $s_\gra(S) = S$ if and only if $\gra$ is a real descent.
\end{proposition}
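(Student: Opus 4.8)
The plan is to treat real and complex descents separately. Throughout I write $\grb = \mu_\grb - \grd$ with $\mu_\grb \in \Phi^+$ for $\grb \in S$, and I rely on the upward-closure property \eqref{eq:facile} of $\wPsi(w)$; since the statement is purely combinatorial, one may assume $\mk$ has characteristic zero.

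\emph{Real descent.} If $\grs_S(\gra) = -\gra$, I first apply Proposition \ref{prop:radici-attive-reali}(ii) to get distinct $\grb, \grb' \in S$ with $\gra = \tfrac12(\grb - \grb')$. Every other $\grg \in S$ is orthogonal to both $\grb$ and $\grb'$, hence to $\gra$, and is therefore fixed by $s_\gra$. From $\grb - \grb' = 2\gra$ and $(\grb, \grb') = 0$ one gets $\|\grb\|^2 + \|\grb'\|^2 = 4\|\gra\|^2$, and inspecting the admissible root-length ratios (the $G_2$ case included) forces $\|\grb\|^2 = \|\grb'\|^2 = 2\|\gra\|^2$; hence $s_\gra$ transposes $\grb$ and $\grb'$. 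Thus $s_\gra(S) = S$, which in particular lies in $\wPsi(w)$.

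\emph{Complex descent.} First, $s_\gra(S) \neq S$: if $s_\gra(S) = S$ then $s_\gra$ commutes with $\grs_S = \grs_{s_\gra(S)}$, and Lemma \ref{lemma:discese-CR} would force $\gra$ to be real. For the inclusion $s_\gra(S) \subset \wPsi(w)$ I set $c_\grb = \langle\gra, \grb^\vee\rangle$ and split $S$ by the sign of $c_\grb$ (equivalently of $\langle\grb, \gra^\vee\rangle$), writing $S^- = \{\grb \in S : c_\grb < 0\}$. Roots with $c_\grb = 0$ are fixed by $s_\gra$. A root $\grb \in S^-$ is moved upward, $s_\gra(\grb) = \grb + |\langle\grb,\gra^\vee\rangle|\gra \geq \grb$, with $s_\gra(\mu_\grb)$ a positive root; thus $s_\gra(\grb) \in \wPsi$, hence $s_\gra(\grb) \in \wPsi(w)$ by \eqref{eq:facile}, and since each such $\grb$ satisfies $\grb + \gra \in \wPhi$, Lemma \ref{lemma:adding-roots} (with $\gra$ in the role of its parameter $\grg$) gives $|S^-| \leq 1$; write $S^- = \{\grb_1\}$ if it is nonempty. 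For the roots with $c_\grb > 0$ I would combine the estimate $\sum_{\grb \in S}|c_\grb| \leq 3$ — which follows from the $\mathfrak{sl}_2$-argument in the proof of Proposition \ref{prop:radici-attive-reali} applied to $\eta := \grs_{S^-}(\gra)$, for which $\langle\eta, \sum_{\grb\in S}\mu_\grb^\vee\rangle = \sum_{\grb\in S}|c_\grb|$ — with the inequality $\sum_{\grb\in S}c_\grb \leq 0$, valid since this sum is the $\grd$-coefficient of the negative root $\grs_S(\gra)$. Adding them yields $\sum_{c_\grb > 0}c_\grb \leq 1$, so there is at most one root $\grb_0$ with $c_{\grb_0} > 0$, and then $c_{\grb_0} = 1$. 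Since $\mu_{\grb_0} \neq \gra$ (else $c_{\grb_0} = 2$), the finite part $\mu_{\grb_0} - \langle\grb_0,\gra^\vee\rangle\gra$ of $s_\gra(\grb_0)$ is a positive root, so $s_\gra(\grb_0) \in \wPsi$. If no such $\grb_0$ occurs we are done; otherwise $\grb_1$ must occur too, since $\sum_{\grb\in S}c_\grb \leq 0$ rules out $S^- = \emptyset$.

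The remaining — and, I expect, most technical — step is to show $s_\gra(\grb_0) \geq \grb_1$ in the dominance order; by \eqref{eq:facile}, applied with $\grb_1 \in \wPsi(w)$, this puts $s_\gra(\grb_0)$ in $\wPsi(w)$ and finishes the proof. When $\Phi$ is simply-laced this is immediate: then $\langle\grb_1,\gra^\vee\rangle = -1$, the $\grd$-coefficient of $\grs_S(\gra) = \gra - \grb_0 + \grb_1$ vanishes, so $\gra - \mu_{\grb_0} + \mu_{\grb_1} \in \Phi^-$, i.e. $\mu_{\grb_0} - \mu_{\grb_1} - \gra = s_\gra(\grb_0) - \grb_1 \geq 0$. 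In general $\langle\grb_0,\gra^\vee\rangle$ and $|\langle\grb_1,\gra^\vee\rangle|$ can be $2$ or $3$ and $\grs_S(\gra)$ can have $\grd$-coefficient $-1$, and one has to analyze the $\gra$-strings through $\mu_{\grb_0}$ and $\mu_{\grb_1}$, using $\mu_{\grb_0} \perp \mu_{\grb_1}$ and the descent condition, to recover the same inequality. This is where I expect the genuine bookkeeping to be.
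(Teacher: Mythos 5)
Your argument follows the same route as the paper's: the real case and the characterization of real descents via Proposition \ref{prop:radici-attive-reali} and Lemma \ref{lemma:discese-CR} are handled correctly, and in the complex case you correctly reduce to showing that the unique $\grb_0\in S$ with $\langle\gra,\grb_0^\vee\rangle>0$ satisfies $s_\gra(\grb_0)\in\wPsi(w)$, which you propose to deduce from $s_\gra(\grb_0)\geq\grb_1$ via \eqref{eq:facile} --- exactly the paper's strategy (with the roles of $\grb_0$ and $\grb_1$ swapped relative to its notation). Your preliminary reductions are sound, and combining $\sum_{\grb\in S}|\langle\gra,\grb^\vee\rangle|\leq 3$ (lifted from the $\mathfrak{sl}_2$-argument inside the proof of Proposition \ref{prop:radici-attive-reali}) with $\sum_{\grb\in S}\langle\gra,\grb^\vee\rangle\leq 0$ is a genuine streamlining: it shows at once that at most one root of $S$ is moved down by $s_\gra$ and that $\langle\gra,\grb_0^\vee\rangle=1$, normalizations the paper does not extract (it instead reduces to $S=\{\grb_0,\grb_1\}$ and treats each non-orthogonal root separately).

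The proof is nevertheless incomplete, and the part you defer is where the substance of the proposition lies. The inequality $s_\gra(\grb_0)\geq\grb_1$ outside the simply-laced case occupies roughly half of the paper's proof, split into three sub-cases according to the Cartan integers $\langle\gra,\grb_1^\vee\rangle$ and $\langle\grb_0,\gra^\vee\rangle$; your constraint $\langle\gra,\grb_0^\vee\rangle=1$ does not eliminate any of them. More importantly, this step is not purely a root-string computation in the rank-two subsystem spanned by $\gra,\mu_{\grb_0},\mu_{\grb_1}$: in the sub-case where $\grs_S(\gra)$ has $\grd$-coefficient $-1$ (your case $|\langle\gra,\grb_1^\vee\rangle|=2$, with $\mu_{\grb_1}$ short and $\gra,\mu_{\grb_0}$ long), the dominance inequality can fail at the level of abstract rank-two combinatorics, and the paper must rule out the offending configuration by a second appeal to the hypothesis $S\subset\wPsi(w)$: it shows that the auxiliary element $\grg'=\gra-\grb_0+\grb_1$ is a root or zero, and that if it were a positive root then both $\grg'+\grb_0$ and $\grg'+\grb_1$ would be roots, contradicting Lemma \ref{lemma:adding-roots}. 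So the missing ``bookkeeping'' requires one more input from the abelian-ideal property, not only from the geometry of $B_2/C_2$ root strings; until it is supplied, your complex case is established only for simply-laced $\Phi$.
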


\begin{proof}
We first prove the second claim, which is easier. 
Suppose that $\grs_S(\gra) = -\gra$, then by Proposition \ref{prop:radici-attive-reali} we have $\gra = \half(\grb_1 - \grb_2)$ 
for some $\grb_1, \grb_2 \in S$, 
thus $\grb_1$ and $\grb_2$ are switched by $s_\gra$, and $s_\gra(\grb) = \grb$ for all $\grb \in S$ different from $\grb_1$, $\grb_2$.

Suppose conversely that $s_\gra(S) = S$ for some $\gra \in \grD$ with $\grs_S(\gra) < 0$. Then $s_\gra$ and $\grs_S$ commute, hence
$\gra$ is a real descent by Lemma \ref{lemma:discese-CR}.

Assume now that $\grs_S(\gra) \neq - \gra$. Since $\grs_S(\gra) = \gra - \sum_{\grb \in S}\ \langle \gra, \grb^\vee \rangle \grb$ is negative and since the elements in $S$ are also negative, 
there exists $\grb_0 \in S$ such that $\langle \gra, \grb_0^\vee \rangle < 0$. 
Thus $\grb_0 + \gra \in \widehat\Psi(w)$, and $\grb_0$ is unique thanks to Lemma \ref{lemma:adding-roots}. Notice that 
$s_\gra(\grb_0)>\grb_0$ and that $\langle s_\gra(\grb_0), d \rangle = -1$, thus $s_\gra(\grb_0) \in \widehat \Psi(w)$ as claimed. 
Let now $\grb_1 \in S \smallsetminus \{\grb_0\}$. If $\grb_1$ is orthogonal to $\gra$, then $s_\gra(\grb_1)=\grb_1$ and the claim is trivial. 
Otherwise, it must be $\langle \gra, \grb_1^\vee \rangle > 0$. Set $n_0=-\langle \gra, \grb_0^\vee \rangle $, $n_1=\langle \gra, \grb_1^\vee \rangle$ and $\grg = s_{\grb_0}s_{\grb_1}(\gra)$. Then we have
$$
	\grg =s_{\grb_0}s_{\grb_1}(\gra)=\gra+n_0\grb_0-n_1\grb_1\leq \grs_S(\gra)<0.
$$ 
In particular we can assume that $S=\{\grb_0,\grb_1\}$. Notice also that $\langle \grg, d \rangle = n_1-n_0$, thus $n_0\geq n_1$. 

If $\Phi$ is of type $G_2$, then there are no combinatorial abelian ideals in $\Phi^+$ containing two orthogonal roots, so we are not in this case.

If $n_0=n_1$ and $\langle \grb_1, \gra^\vee\rangle=1$, then $\langle \grg, d \rangle = 0$. Therefore $\gra+n_0\grb_0 -n_0\grb_1= \gamma \in \Phi^-$, and since $\gra$ is simple 
we deduce $s_\gra(\grb_1) = \grb_1-\gra > \grb_0$. Thus we get $s_\gra(\grb_1) \in \widehat\Psi(w)$ by \eqref{eq:facile}.

If $n_0>n_1$, then $n_0=2$, $n_1=1$. In this case $\gra$ and $\grb_1$ are long roots, whereas $\grb_0$ is a short root, and $s_\gra(\grb_1)=\grb_1-\gra$.
Notice that $\gra-\grb_1$ and $\grg=\gra-\grb_1+2\grb_0$ are both roots, hence $\gra-\grb_1+\grb_0$ is either a root or zero. Denote $\gamma' =\gra-\grb_1+\grb_0$. If $\grg'\leq 0$ then we can conclude as in the previous case. Otherwise $\grg'\in \Phi^+$, and we get a contradiction because $\grg'+\grb_1=\gra+\grb_0
=s_{\gra}(\grb_0)$ and $\grg'+\grb_0=\grg$ are both roots, which is impossible by Lemma \ref{lemma:adding-roots}.

If $n_0=n_1$ and $\langle \grb_1, \gra^\vee\rangle=2$, then $n_0=n_1=1$. Moreover, $\grb_0$ and $\grb_1$ are long roots, whereas $\gra$ and $\grg$ are short roots, and $\langle \grb_0, \gra^\vee\rangle=-2$. 
Thus $\langle \grg,\gra^\vee\rangle = -2$ and $\langle \grg, d \rangle = 0$, and it follows $\gra + \grb_0 - \grb_1 = \grg \in \Phi^-$. Since $\gra$ is simple and $\grg + 2\gra \in \Phi$, we have $\grg + \gra \in \Phi^- \cup \{0\}$. If $\grg = - \gra$, then $s_\gra(\grb_1) = \grb_0 \in \widehat \Psi(w)$. Otherwise $2 \gra + \grb_0 - \grb_1 = \gra + \grg \in \Phi^-$, and it follows $s_\gra(\grb_1) = \grb_1-2\gra > \grb_0$, thus $s_\gra(\grb_1) \in \widehat\Psi(w)$ by \eqref{eq:facile}.
\end{proof}

\subsection{Admissible pairs and their associated involutions}

\begin{definition}
Let $v \in \Wab$ and $S \subset \widehat \Psi$ an orthogonal subset. Given $w \in \Wab$, the pair $(v,S)$ is called $w$-\textit{admissible} 
if $v\leq w$ and $S \subset \widehat\Psi(w) \smallsetminus \widehat\Psi(v)$. The pair $(v,S)$ is called \textit{admissible} if it is $w$-admissible for some 
$w \in \Wab$.
\end{definition}


Given an admissible pair $(v,S)$ we will sometime refer to $\grs_{v(S)}$ as the 
\textit{involution associated to} $(v,S)$, and will denote it also by $\grs(v,S)$.

\begin{definition}
Given $\gra \in \widehat \grD$, we say that $\gra$ is a \textit{descent} for $(v,S)$ if $\grs_{v(S)}(\gra) < 0$.
\end{definition}

We will make use of the terminology introduced in Definition \ref{def:discese} also in this context: we will say that a descent 
$\gra$ for an admissible pair $(v,S)$ is \textit{real} (resp. \textit{complex}) if it is so as a descent for $\grs_{v(S)}$, and we will write $L(v,S)$ for $L(\grs(v,S))$.


The goal of this subsection is to study the descents associated to admissible pairs. The geometrical meaning of the admissible pairs and of their descents, which will play a major role in our inductions, will become clear in the next sections, 
when we will give an interpretation of the admissible pairs in terms of $B$-orbits in $B$-stable abelian subalgebras of $\u$.

The next proposition shows that a descent of an admissible pair is of two possible types. 

\begin{proposition}	\label{prop:discese-AF}
Let $w \in \Wab$, let $(v,S)$ be a $w$-admissible pair and let $\gra \in \widehat \grD$ be a descent for $(v,S)$. 
Set $\grb = v^{-1}(\gra)$. Then $\grb>0$, and either $\grb \in -\widehat\Psi(w)$ or $\grb \in \grD$. If moreover $\grb \in \grD$, then $\grs_S(\grb)<0$.
\end{proposition}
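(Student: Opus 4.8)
\section*{Proof proposal}

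The plan is to translate everything through the conjugation identity $\grs_{v(S)} = v\,\grs_S\,v^{-1}$. Put $\grb = v^{-1}(\gra)$; then the hypothesis that $\gra$ is a descent for $(v,S)$ reads
\[
\grs_{v(S)}(\gra) \;=\; v\bigl(\grs_S(\grb)\bigr) \;<\; 0, \qquad\text{while}\qquad v(\grb) = \gra > 0.
\]
Two elementary remarks will be used throughout. First, since $S \subset \wPsi(w)\smallsetminus\wPsi(v)$ consists of negative roots which $v$ does \emph{not} send to positive roots, we have $v(S) \subset \wPhi^-$. Second, if $\mu,\nu \in \wPsi(w)$ then $\langle\mu,\nu^\vee\rangle \geq 0$: otherwise $\mu+\nu$ would be a root by the root string argument, hence $\mu+\nu \in \wPsi(w)$ by biclosedness, which is impossible because $\wPsi(w)\subset\wPsi$ consists of roots of $\grd$-degree $-1$ while $\mu+\nu$ has $\grd$-degree $-2$. (This sharpens the observation following \eqref{eq:facile}.)

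\emph{Step 1: $\grb > 0$.} Suppose not. Then $v(\grb)=\gra>0$ forces $\grb \in \wPsi(v) \subset \wPsi(w)$, so $\grb$ and every $\grg\in S$ lie in $\wPsi(w)$ and the second remark gives $\langle\grb,\grg^\vee\rangle\geq 0$ for all $\grg\in S$. Hence
\[
v\bigl(\grs_S(\grb)\bigr) \;=\; v(\grb) - \sum_{\grg\in S}\langle\grb,\grg^\vee\rangle\,v(\grg) \;=\; \gra + \sum_{\grg\in S}\langle\grb,\grg^\vee\rangle\bigl(-v(\grg)\bigr)
\]
is a non-negative integral combination of the positive roots $\gra$ and $-v(\grg)$ $(\grg\in S)$, so it lies in the positive cone; being a non-zero root it is positive, contradicting $v(\grs_S(\grb))<0$. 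Thus $\grb>0$.

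\emph{Step 2: the dichotomy.} The same computation now shows that, since $v(\grs_S(\grb))<0$ but $v(\grb)=\gra>0$, there exists $\grg_0\in S$ with $\langle\grb,\grg_0^\vee\rangle<0$. I split according to whether $\gra$ is a \emph{real} or a \emph{complex} descent for $\grs_{v(S)}$, i.e.\ whether $\grs_S(\grb)=-\grb$ or not. In the real case Proposition~\ref{prop:radici-attive-reali} applies to $\grs_S$ (because $S\subset\wPsi(w)$) and gives $\grb=\tfrac12(\pm\grg\pm\grg')$ with $\grg,\grg'\in S$; comparing $\grd$-degrees and using $\grb>0$ excludes the value $-1$, so either $\grb=\tfrac12(\grg-\grg')\in\Phi$ is a positive finite root, or $\grb=-\tfrac12(\grg+\grg')$ has $\grd$-degree $1$. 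In the second case $-\grb=\tfrac12(\grg+\grg')\in\wPsi$, and since $s_\grg(\grb)=\grb+\grg=\tfrac12(\grg-\grg')$ is a root (here $\langle\grb,\grg^\vee\rangle=-1$), one of $\grg,\grg'$ is $\leq -\grb$ in the dominance order, so \eqref{eq:facile} forces $-\grb\in\wPsi(w)$, i.e.\ $\grb\in-\wPsi(w)$. In the first case I claim $\grb\in\grD$: a positive finite root $\grb$ with $v(\grb)$ simple is automatically simple, because a decomposition $\grb=\grb_1+\grb_2$ with $\grb_1,\grb_2\in\Phi^+$ would give $\gra=v(\grb_1)+v(\grb_2)$ with both summands positive roots (as $v\in\Wab$ has $N(v)=-\wPsi(v)\subset\grd-\Phi^+$, so $v$ sends no finite positive root to a negative one), contradicting indecomposability of $\gra$; moreover $\grs_S(\grb)=-\grb<0$, which also gives the last assertion.

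\emph{Step 3: the complex case.} Here one analyses $\grs_S(\grb)=\grb-\sum_{\grg\in S}\langle\grb,\grg^\vee\rangle\grg$ directly, keeping track of $\grd$-degrees. Using $\langle\grb,\grg_0^\vee\rangle<0$ and the alcove characterization \eqref{eq:alcova} of $\Wab$ (the walls of $v^{-1}\calA\subset 2\calA$), together with $\grb>0$, one shows that $\grb$ has $\grd$-degree $0$ or $1$; the cleanest subcase is $\overline\grb=-\overline{\grg_0}$, where $-\grb=\grg_0\in S\subset\wPsi(w)$ outright. If the $\grd$-degree is $1$ one concludes $\grb\in-\wPsi(w)$ exactly as in Step~2; if it is $0$ one concludes $\grb\in\grD$ as in Step~2, and for the last assertion one argues by contradiction: if $\grs_S(\grb)\geq 0$, then $v(\grs_S(\grb))<0$ forces $\grs_S(\grb)\in N(v)=-\wPsi(v)$ (as $v$ sends positive roots outside $N(v)$ to positive roots), while a short computation gives $-\grs_S(\grb)\leq\grg_0$ in the dominance order (the difference being a non-negative sum of simple roots built from $\grb$ and the positive part of $-\grg_0$), whence \eqref{eq:facile} yields $\grg_0\in\wPsi(v)$, contradicting $\grg_0\in S$. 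The main obstacle is exactly this last step: since Proposition~\ref{prop:radici-attive-reali} is unavailable for complex descents, controlling the $\grd$-degrees of $\grb$ and of $\grs_S(\grb)$ — and in particular ruling out $\grd$-degree $\geq 2$ — requires the most delicate bookkeeping, relying on the alcove description of $\Wab$ and on Lemma~\ref{lemma:adding-roots}.
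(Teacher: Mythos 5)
Your Step~1 and the real-descent half of Step~2 are correct (Step~1 is essentially the paper's own argument for $\grb>0$, and the real case is a nice self-contained application of Proposition~\ref{prop:radici-attive-reali}), but the complex case --- which is the generic one --- is only asserted, and the assertions do not hold up. Concretely: (a) the claim that $\grb$ has $\grd$-degree $0$ or $1$ is never proved, and you cannot import it from the conclusion you are trying to establish; (b) ``one concludes $\grb\in-\wPsi(w)$ exactly as in Step~2'' is not available, because the degree-$1$ argument of Step~2 rests on the explicit expression $\grb=-\tfrac12(\grg+\grg')$ coming from Proposition~\ref{prop:radici-attive-reali}, which exists only for real descents; (c) the dominance inequality $-\grs_S(\grb)\leq\grg_0$ invoked for the final assertion is unjustified: writing $\grg_0+\grs_S(\grb)=\grb+(1-\langle\grb,\grg_0^\vee\rangle)\grg_0-\sum_{\grg\neq\grg_0}\langle\grb,\grg^\vee\rangle\grg$, the middle term is a positive multiple of the \emph{negative} root $\grg_0$, so there is no reason for this to be a nonnegative combination of simple roots. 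You yourself flag the complex case as ``the main obstacle''; it is, and it is not closed.

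The paper's proof avoids the real/complex dichotomy and all degree bookkeeping. From $\grs_{v(S)}(\gra)<0$ one extracts $\grg_0\in S$ with $\langle\grb,\grg_0^\vee\rangle<0$, so $\grb+\grg_0$ is a root or zero. If it is zero, then $\grb=-\grg_0\in-\wPsi(w)$. Otherwise $v(\grb+\grg_0)=\gra+v(\grg_0)<0$, and one reads off $w(\grb)=w(\grb+\grg_0)-w(\grg_0)$ after a sign analysis on $\grb+\grg_0$: if $\grb+\grg_0>0$ it lies in $-\wPsi(v)\subset-\wPsi(w)$; if $\grb+\grg_0<0$ then either $\grb+\grg_0\in\Phi^-$ (whence $w(\grb+\grg_0)<0$) or $\grb\in\Phi^+$. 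In the first two subcases $w(\grb)<0$, and minusculeness of $w$ then forces $\grb\in-\wPsi(w)$ automatically --- no separate control of the $\grd$-degree of $\grb$ is ever needed. In the last subcase $\grb$ is simple by your indecomposability argument, and the final assertion follows because, if $\grs_S(\grb)$ were in $-\wPsi(v)$, then $\langle\grs_S(\grb),\grg_0^\vee\rangle=-\langle\grb,\grg_0^\vee\rangle>0$ would make $\grs_S(\grb)-\grg_0$ a root, which is impossible since two elements of $-\wPsi(w)$ cannot sum to a root. I would rework your Step~3 along these lines rather than trying to complete the $\grd$-degree bookkeeping.
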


\begin{proof}
By the orthogonality of $S$ we have
\begin{equation}	\label{eq:sigma-vSa}
\grs_{v(S)}(\gra) =  \gra - \sum_{\grg \in S} \langle \grb , \grg^\vee \rangle v(\grg).
\end{equation}
Since by assumption $\grs_{v(S)}(\gra)$ and $v(\grg)$ are both negative when $\grg\in S$,
there is at least one root $\grg_0\in S$ such that $\langle \grb , \grg_0^\vee \rangle < 0$. In particular $\grg_0+\grb$ is a root.

Suppose that $\grb < 0$. Then $\grb \in \widehat\Psi(v)$, hence $\grb \in \widehat\Psi(w)$ thanks to Proposition \ref{prop:weak-iff-strong}. 
Since $w \in \Wab$, this yields a contradiction because $\grg_0 + \grb$ cannot be a root by \eqref{eq:facile}.

Therefore $\grb > 0$.  Thus $\grb + \grg_0$ is either zero or a root. If it is zero, then $\grb\in -\widehat\Psi(w)$ proving our claim, so we can assume that it is a root.
Since $\gra \in \widehat \grD$ and $v(\grg_0) \in \widehat \Phi^-$, it follows that $v(\grb + \grg_0) = \gra + v(\grg_0) < 0$.

If $\grb + \grg_0 > 0$, then $\grb + \grg_0 \in -\widehat\Psi(v)$, thus $\grb+\grg_0 \in -\widehat\Psi(w)$ as well, 
and $w(\grb) = w(\grb+\grg_0) - w(\grg_0) \in \widehat \Phi^-$ proving $\grb \in -\widehat\Psi(w)$.

Otherwise we have $\grb + \grg_0 < 0$. Since $\grg_0 \in \widehat \Psi$ and $\grb > 0$, it follows that either 
$\grb + \grg_0 \in \Phi^-$ or $\grb \in \Phi^+$. If $\grb + \grg_0 \in \Phi^-$, then $w(\grb + \grg_0)<0$, hence 
$w(\grb) = w(\grb+\grg_0) - w(\grg_0) < 0$, and again we get $\grb \in -\widehat\Psi(w)$.

Suppose finally that $\grb \in \Phi^+$. Then it must be $\grb \in \grD$: if indeed $\grb = \grb_1 + \grb_2$ for some 
$\grb_1, \grb_2 \in \Phi^+$, then $\gra = v(\grb_1) + v(\grb_2)$, which is absurd since $\gra$ is simple and $v(\Phi^+) \subset \widehat \Phi^+$.

This proves the first claim. Suppose now that $\grb \in \grD$. Since $v\grs_S(\grb)=\grs_{v(S)}(\gra) < 0$, it must be either 
$\grs_S(\grb) \in -\widehat\Psi(v)$ or $\grs_S(\grb) < 0$. Suppose that $\grs_S(\grb) \in -\widehat\Psi(v)$ and write 
$\grs_S(\grb) = \grb - \sum_{\grg \in S} \langle \grb, \grg^\vee \rangle \grg$. Then 
$\langle \grs_S(\grb),\grg_0^\vee\rangle = -\langle \grb ,\grg_0^\vee \rangle >0$, and because 
$\grs_S(\grb) > 0$ it follows that $\grs_S(\grb)-\grg_0$ is a root, against the fact that both 
$\grs_S(\grb)$ and $-\grg_0$ are in $-\widehat\Psi(w)$ and $w\in\Wab$. Therefore $\grs_S(\grb)<0$, and the second claim is proved.
\end{proof}

Thus we give the following definition.

\begin{definition}\label{def:discesefiniteaffini}
Let $(v,S)$ be an admissible pair and let $\gra\in\widehat\Delta$ be a descent for $(v,S)$. We say that $\gra$ is of 
\textit{finite type} if $v^{-1}(\gra)\in\Delta$, and we say that it is of \textit{affine type} if $v^{-1}(\gra)\in -\widehat\Psi$.
\end{definition}

\begin{remark}
Let $\gra \in \widehat \grD$ be a descent for an admissible pair $(v,S)$ and denote $\grb = v^{-1}(\gra)$. 
By the previous Proposition we have $s_\gra v>v$. Moreover, if $\gra$ is affine then 
$\Psi(s_\gra v)=\Psi(v)\cup\{-\grb\}$ and in particular $s_\gra v$ is a minuscule element, while if $\gra$ is finite 
the minimal lenght coset representative of $s_\gra v $ in $\widehat W /W$ is equal to $v$ and $\grb$ is a descent for $\grs_S$, 
and it is real if and only if $\gra$ is a real descent for $\grs_{v(S)}$.
\end{remark}

We now characterize the affine descents of an admissible pair $(v,S)$ 
among the simple roots $\gra \in \widehat \grD$ such that $v^{-1}(\gra) \in -\widehat \Psi$.

\begin{proposition}	\label{prop:discese-affini}
Let $w \in \Wab$, let $(v,S)$ be a $w$-admissible pair and let $\gra \in \widehat \grD$ be such that $\grb=v^{-1}(\gra) \in -\widehat\Psi(w)$. 
Then $\gra$ is a descent for $(v,S)$ if and only if $\grb$ is not orthogonal to $S$, and it is real if and only if $\grb \in -S$.
\end{proposition}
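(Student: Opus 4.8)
The plan is to work directly from the expansion
\begin{equation*}
\grs_{v(S)}(\gra) = \gra - \sum_{\grg \in S} \langle \grb, \grg^\vee \rangle\, v(\grg), \qquad \grb = v^{-1}(\gra),
\end{equation*}
valid by the orthogonality of $S$ (this is \eqref{eq:sigma-vSa}). Since $\gra \in \widehat \grD$ and each $v(\grg) \in \widehat \Phi^-$ for $\grg \in S$, the sign of $\grs_{v(S)}(\gra)$ is controlled by whether the sum is nonzero. First I would observe that, because $\grb \in -\widehat\Psi(w)$ (so $-\grb \in \widehat\Psi(w)$) and since $w \in \Wab$, the pairing $\langle \grb, \grg^\vee \rangle$ vanishes if and only if $\grb$ and $\grg$ are orthogonal, and in that case $\pm\grb \pm \grg \not\in \widehat\Phi$ by the strong orthogonality property recorded after \eqref{eq:facile}. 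Conversely, if $\grb$ is not orthogonal to $S$ there is some $\grg_0 \in S$ with $\langle \grb, \grg_0^\vee\rangle \neq 0$; I would then argue, exactly as in the proof of Proposition \ref{prop:discese-AF}, that in fact $\langle \grb, \grg_0^\vee\rangle < 0$ for at least one such $\grg_0$ — otherwise all contributions $-\langle \grb, \grg^\vee\rangle v(\grg)$ would be negative multiples of negative roots plus the positive simple $\gra$, contradicting either $w$-admissibility or $w \in \Wab$ via \eqref{eq:facile} applied to $-\grb, -\grg_0$. With a strictly negative pairing present, $\grs_{v(S)}(\gra) = \gra + (\text{sum of nonpositive multiples of negative roots}) \neq \gra$, and the computation $\langle \grs_{v(S)}(\gra), d\rangle$ together with $\gra \in \widehat\grD$ forces $\grs_{v(S)}(\gra) < 0$, i.e.\ $\gra$ is a descent. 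So descent $\iff$ $\grb$ not orthogonal to $S$.

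For the reality statement, recall from Lemma \ref{lemma:discese-CR} that $\gra$ is a real descent if and only if $s_\gra$ commutes with $\grs_{v(S)}$, equivalently $\grs_{v(S)}(\gra) = -\gra$. Conjugating by $v$, this is equivalent to $\grs_S(\grb) = -\grb$. If $\grb \in -S$, then $\grs_S(\grb) = s_\grb(\grb) = -\grb$ and the other reflections $s_\grg$, $\grg \in S \smallsetminus \{-\grb\}$, fix $\grb$ (orthogonality), so $\gra$ is real. Conversely, suppose $\grs_S(\grb) = -\grb$. Then Proposition \ref{prop:radici-attive-reali}, applied with $w$ and the orthogonal set $S \subset \widehat\Psi(w)$ (note $-\grb \in \widehat\Psi(w)$, so $\grb \in \Phi \cup \widehat\Psi$ makes this applicable via $\grs_S(-\grb) = \grb$), forces $-\grb = \tfrac12(\grg_1 + \grg_2)$ with $\grg_1, \grg_2 \in S$, because $-\grb \in \widehat\Psi$ (it has $d$-value $1$). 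But $-\grb$ is a positive inversion of a minuscule element and hence a minimal-length inversion datum; I would invoke Lemma \ref{lemma:adding-roots} together with the fact that $\tfrac12(\grg_1+\grg_2) = s_{\grg_2}(\tfrac12(\grg_1-\grg_2))$ and that $\widehat\Psi(w)$ is closed under addition, to rule out $\grg_1 \neq \grg_2$: if $\grg_1 \neq \grg_2$ then $-\grb - \grg_2 = \tfrac12(\grg_1 - \grg_2) \in \Phi$, and since $-\grb \in \widehat\Psi(w)$ this would put $\tfrac12(\grg_1+\grg_2) + \grg_2 = -\grb + \tfrac12(3\grg_2 - \grg_1)$ — cleaner: $-\grb + \grg_1$ and $-\grb + \grg_2$ would both be roots (indeed $-\grb+\grg_1 = \tfrac12(\grg_1-\grg_2) + \grg_1$... ), contradicting Lemma \ref{lemma:adding-roots} with orthogonal pair $\grg_1, \grg_2 \in \widehat\Psi(w)$ and shift $-\grb - \tfrac12(\grg_1+\grg_2) = 0$. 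Hence $\grg_1 = \grg_2$ and $-\grb = \grg_1 \in S$, i.e.\ $\grb \in -S$.

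The main obstacle I anticipate is the converse reality direction: ruling out the case $-\grb = \tfrac12(\grg_1 + \grg_2)$ with $\grg_1 \neq \grg_2$ cleanly. The honest fix is to note that $-\grb \in \widehat\Psi(w)$ and $\grg_1, \grg_2 \in \widehat\Psi(w)$, so $-\grb + \grg_i$ can never be a root (sum of two elements of $\widehat\Psi(w)$, which is closed under addition inside $\widehat\Psi$), yet $(-\grb) + \grg_1 = \tfrac12(\grg_1 - \grg_2) + \grg_1$ and one of $\pm\tfrac12(\grg_1-\grg_2)$ lies in $\Phi$ by Proposition \ref{prop:radici-attive-reali}(ii), say $\grg := \tfrac12(\grg_1-\grg_2) \in \Phi$; then $\grg_1 = -\grb + (\grg + \text{?})$ — I would instead simply compute $\langle -\grb, \grg_1^\vee\rangle = \tfrac12(\|\grg_1\|^2/\ldots)$ type identities and check that $-\grb$ and $\grg_1$ are non-orthogonal with a negative pairing forcing $-\grb - \grg_1 \in \widehat\Phi$, contradicting orthogonality $+$ strong orthogonality of elements of $\widehat\Psi(w)$. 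Once that combinatorial point is settled, everything else is a direct translation through $v$ and an application of the already-proven Lemmas \ref{lemma:adding-roots}, \ref{lemma:discese-CR} and Proposition \ref{prop:radici-attive-reali}.
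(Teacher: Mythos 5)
The first half of your argument (descent $\iff$ $\grb$ not orthogonal to $S$) is essentially the paper's: since $-\grb$ and every $\grg \in S$ lie in $\widehat\Psi(w)$ and $w \in \Wab$, the difference $\grb - \grg$ is never a root, so $\langle \grb, \grg^\vee\rangle \le 0$ for all $\grg \in S$; non-orthogonality then makes at least one coefficient in \eqref{eq:sigma-vSa} a \emph{strictly positive} multiple of a negative root (watch your sign slip: the contributions are nonnegative, not nonpositive, multiples of negative roots), and a real root strictly below the simple root $\gra$ in the dominance order must be negative. That part is fine.

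The converse of the reality statement has a genuine gap. After extracting $-\grb = \half(\grg_1+\grg_2)$ with $\grg_1,\grg_2 \in S$ from Proposition \ref{prop:radici-attive-reali}, you try to exclude $\grg_1 \neq \grg_2$ using only the combinatorics of $\widehat\Psi(w)$ (Lemma \ref{lemma:adding-roots}, strong orthogonality of orthogonal pairs in $\widehat\Psi(w)$). No such contradiction exists, because the configuration you are trying to rule out actually occurs: in type $C_2$ take $\widehat\Psi(w) = \{2\gre_1-\grd,\ \gre_1+\gre_2-\grd,\ 2\gre_2-\grd\}$, $\grg_1 = 2\gre_1-\grd$, $\grg_2 = 2\gre_2-\grd$ (orthogonal, both in $\widehat\Psi(w)$), and $-\grb = \gre_1+\gre_2-\grd = \half(\grg_1+\grg_2) \in \widehat\Psi(w)$. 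Here $\grs_S(\grb) = -\grb$ although $\grb \notin -S$; Lemma \ref{lemma:adding-roots} is not violated (the relevant sum $\grg_1 + \half(\grg_1-\grg_2) = 3\gre_1-\gre_2-\grd$ is simply not a root), and the fact that $-\grb - \grg_1 = \gre_2 - \gre_1$ is a root contradicts nothing, since $-\grb$ and $\grg_1$ are not orthogonal. What rescues the proposition --- and what your argument never uses --- is the hypothesis that $\gra = v(\grb)$ is \emph{simple}. Apply $v$ to the identity: $2\gra = -v(\grg_1) - v(\grg_2)$, where $-v(\grg_1)$ and $-v(\grg_2)$ are positive roots because $S \cap \widehat\Psi(v) = \varnothing$. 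Twice a simple root cannot be the sum of two positive roots unless both equal that simple root, so $v(\grg_1) = v(\grg_2) = -\gra$, whence $\grg_1 = \grg_2 = -\grb$ and $\grb \in -S$. (Equivalently: the $C_2$ configuration above is never realized with $v(\gre_1+\gre_2-\grd)$ simple.) This is exactly how the paper concludes, and it is the step your proposal is missing.
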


\begin{proof}
If $\grb$ is orthogonal to $S$, then $\gra$ is orthogonal to $v(S)$ and $\grs_{v(S)}(\gra) = \gra$. Therefore, if $\gra$ is a descent, $\grb$ is not orthogonal to $S$. 
Suppose conversely that $\grb \in -\widehat\Psi(w)$ is not orthogonal to $S$. 
Since $\grb - \grg$ is never a root for $\grg \in S$, we have $\langle \grb, \grg^\vee \rangle \leq 0$ 
for all $\grg \in S$. On the other hand $v(\grg) <0$ for all $\grg \in S$, thus by \eqref{eq:sigma-vSa} 
we see that $\gra$ is a descent for $(v,S)$.

We now show that $\gra$ is real if and only if $\grb \in -S$. Clearly, $\gra$ is real if $\grb \in - S$.
Suppose conversely that $\gra$ is real, that is $\grs_S(\grb)=-\grb$. Since $\grb\in -\widehat\Psi$, by Proposition \ref{prop:radici-attive-reali}
we have then $\grb= -(\grg_1+\grg_2)/2$ and $\gra=-(v(\grg_1)+v(\grg_2))/2$, with $\grg_1,\grg_2\in S$. Since $\gra$ is simple and $v(\grg_1)$, $v(\grg_2)$ are both negative, 
these roots must be equal, thus $\grb=-\grg_1\in -S$. 
\end{proof} 

\section{The extended action, and dimension formulas}\label{sez:dimensione}

For $v\in \widehat W_{\ab}$, let $B_v$ be the split extension of $B$ defined by $B_v = B \ltimes \wgoa_v$, with product given by
\begin{equation}\label{eq:gruppo}
(p,x)(q,y) = (pq, \Ad_{q^{-1}}(x)+y).
\end{equation}
If $w \in \widehat W_\ab$ and $v\leq w$, then $B_v$ acts on $\wgoa_w$ by
\begin{equation}\label{eq:azione}
(p,x). y = \Ad_p(x+y). 
\end{equation}

Notice that $B = B_e$ (where $e \in W$ denotes the neutral element), and that $\widehat \aa_w$ is homogeneous under $B_w$. Moreover, by Proposition \ref{prop:weak-iff-strong} we have $B_v \subset B_{v'}$ if and only if $v \leq v'$: thus we can study the $B$-action on $\widehat \aa_w$ by studying inductively the action of $B_v$ on $\widehat \aa_w$, where $v \leq w$. Proceeding in this way, in this section we will show that $B$ acts with finitely many orbits on $\widehat \aa_w$, and we will give a parametrisation of the $B_v$-orbits in $\widehat \aa_w$ in terms of the $w$-admissible pairs introduced in the previous section, as well as a formula for their dimension. On the other hand, $\widehat \goa_w$ and $\goa_w$ are isomorphic $B$-modules, thus the previous discussion applies to $\goa_w$ as well.

Recall from the Introduction that, if $S$ is an orthogonal set of real roots, then $e_S=\sum_{\grg\in S}e_\grg$. Moreover, we define $f_S=e_{-S}$, and we set the convention
that $e_\varnothing=f_\varnothing=0$.
Since $e_{\pm\gra}$ and $e_{\pm\grb}$ commute for all $\gra,\grb\in S$, we have
$$
\grs_S=\exp{(-f_S)}\exp{(e_S)}\exp{(-f_S)}.
$$
Let $\widehat \calF=\widehat G/\widehat B$ be the affine flag variety, where $\widehat B \subset \widehat G$ is the Iwahori subgroup defined by $\widehat \Phi^+$, and let $x_0 \in \widehat \calF$ be the base point defined by $\widehat B$. 

\begin{lemma}\label{lem:eS}
Let $v\in \widehat W_{\ab}$, then $vBv^{-1}\subset \widehat B$ and $\Ad_v\wgoa_v\subset \widehat \gob$. If moreover $(v,S)$ is an admissible pair we have
$$ v \exp{(B_v e_S)}\, v^{-1} x_0 \subset \widehat B \grs_{v(S)} x_0.$$
In particular, if $(v,S)$ and $(v,S')$ are admissible pairs and $B_v e_S = B_v e_{S'}$, then $S=S'$. 
\end{lemma}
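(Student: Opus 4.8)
The first assertion $vBv^{-1} \subset \widehat B$ is the standard fact that a minuscule element, being a minimal-length coset representative for $\widehat W/W$, sends all of $\widehat \Phi^+ \cap \Phi$ (equivalently $\Phi^+$) into $\widehat\Phi^+$: indeed $v^{-1}$ sends no root of $\Phi^+$ to a negative affine root, as observed in the proof of Proposition \ref{prop:weak-iff-strong}, so $\Ad_v$ maps every root space $\gog_\gamma$ with $\gamma\in\Phi^+$ into $\widehat\gob$, and hence $\Ad_v\gob\subset\widehat\gob$, which exponentiates to $vBv^{-1}\subset\widehat B$. For $\Ad_v\wgoa_v\subset\widehat\gob$, note that $\wgoa_v=\bigoplus_{\gamma\in\wPsi(v)}\gog_\gamma$ and by definition $v(\gamma)\in\wPhi^+$ for every $\gamma\in\wPsi(v)$, so $\Ad_v$ carries each such root space into $\widehat\gob$.

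For the main inclusion, the plan is to compute $v\exp(B_v e_S)v^{-1}x_0$ directly. A general element of $B_v e_S$ has the form $(p,x).e_S = \Ad_p(x+e_S)$ with $p\in B$ and $x\in\wgoa_v$. Using the (normalised, $B$-equivariant) exponential maps available on the relevant orthogonal root subsets, one has $\exp(\Ad_p(x+e_S)) = p\exp(x+e_S)p^{-1}$, so conjugating by $v$ gives $v\exp((p,x).e_S)v^{-1} = (vpv^{-1})\,v\exp(x+e_S)v^{-1}\,(vpv^{-1})^{-1}$. Since $vpv^{-1}\in\widehat B$ by the first part, and $x_0$ is $\widehat B$-fixed, it suffices to show $v\exp(x+e_S)v^{-1}x_0\in\widehat B\,\grs_{v(S)}x_0$. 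Now $\wPsi(v)$ and $S$ together span an orthogonal (hence, by the remark after \eqref{eq:facile}, strongly orthogonal) set of roots in $\wPsi(w)$, so the exponential factors as a commuting product $\exp(x)\exp(e_S)$; moreover $v\exp(x)v^{-1}\in\widehat B$ again by the first part. Thus we are reduced to $v\exp(e_S)v^{-1}x_0 = \exp(v(e_S))\,vx_0 = \exp(e_{v(S)})\,v\,x_0$, where $v$ here denotes a representative in $\widehat G$. The point is then to show $\exp(e_{v(S)})vx_0\in\widehat B\grs_{v(S)}x_0$. Write $S=\{\beta_1,\dots,\beta_k\}$; since $v(\beta_i)\in\wPhi^+$ (as $\beta_i\notin\wPsi(v)$, $v(\beta_i)>0$), each $u_{v(\beta_i)}(1)$ lies in $\widehat B$, and one uses the identity $s_\gamma x_0 = u_{-\gamma}(-1)u_\gamma(1)u_{-\gamma}(-1)x_0 = u_{-\gamma}(-1)u_\gamma(1)x_0$ (the last $u_{-\gamma}(-1)$ being absorbed since... no—rather $u_{-\gamma}$ for $\gamma>0$ is not in $\widehat B$), so more carefully: $\grs_{v(S)}=\exp(-f_{v(S)})\exp(e_{v(S)})\exp(-f_{v(S)})$, and $\exp(-f_{v(S)})$ with $v(S)\subset\wPhi^+$ has negative root vectors, so conjugating by it and absorbing what lands in $\widehat B$ one checks that $\exp(e_{v(S)})vx_0$ and $\grs_{v(S)}x_0$ lie in the same $\widehat B$-orbit. (This is the Bruhat-cell bookkeeping: the cell $\widehat B\grs_{v(S)}x_0$ contains $\exp(e_{v(S)})\cdot(\text{torus})\cdot vx_0$ because $v\in W_{v(S)}$... one should instead observe $vx_0 = \grs_{v(S)}\grs_{v(S)}^{-1}vx_0$ with $\grs_{v(S)}^{-1}v\in\widehat W$ of the appropriate length.)

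The cleanest route for the previous step is to use the length additivity: since the roots $v(\beta_i)\in\wPhi^+$ and the $e_{v(\beta_i)}$ commute, $\exp(e_{v(S)})$ is a product of $k$ commuting unipotents each moving $x_0$ across one wall, and a standard computation in the affine flag variety (the description of Bruhat cells recalled as ``Lemma 5.1'' in the paper) identifies $\exp(e_{v(S)})x_0$ with $\grs_{v(S)}x_0$ up to $\widehat B$, after which multiplying on the right by $v$ and noting that $\grs_{v(S)}v$ has a reduced expression compatible with $\grs_{v(S)}$ concludes. I expect the main obstacle to be exactly this last bookkeeping: verifying that the unipotent factors coming from $v$ and from the inner $\exp(-f_{v(S)})$ in $\grs_{v(S)}$ either lie in $\widehat B$ or can be moved past the relevant reflections without changing the Bruhat cell, i.e., controlling lengths so that no cancellation lowers the cell. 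Once that is in place, the final ``in particular'' is immediate: if $B_ve_S=B_ve_{S'}$ then $\widehat B\grs_{v(S)}x_0$ and $\widehat B\grs_{v(S')}x_0$ meet, hence coincide, so $\grs_{v(S)}=\grs_{v(S')}$ in $\widehat W$; conjugating by $v^{-1}$ gives $\grs_S=\grs_{S'}$, and since $S,S'\subset\wPsi(w)$ are orthogonal, Proposition \ref{prop:iniettivita-involuzioni} forces $S=S'$.
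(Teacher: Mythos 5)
Your opening (the two containment claims, the reduction of a general element $\Ad_p(x+e_S)$ to $\exp(e_S)$ modulo $\widehat B$, and the ``in particular'' at the end via Proposition \ref{prop:iniettivita-involuzioni}) matches the paper's argument. But the decisive step contains a sign error that derails everything after it. You assert that $v(\beta_i)\in\wPhi^+$ ``as $\beta_i\notin\wPsi(v)$, $v(\beta_i)>0$''. This inverts the definition: $\wPsi(v)=\{\gra\in\wPhi^-: v(\gra)\in\wPhi^+\}$, and admissibility puts $S\subset\wPsi(w)\smallsetminus\wPsi(v)\subset\wPhi^-$, so $\beta\notin\wPsi(v)$ means precisely $v(\beta)<0$. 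This sign is the whole point of the lemma: since $v(S)\subset\wPhi^-$, the roots $-v(S)$ are positive, so $\exp(-f_{v(S)})\in\widehat B$, whence $\widehat B\grs_{v(S)}x_0=\widehat B\exp(-f_{v(S)})\exp(e_{v(S)})\exp(-f_{v(S)})x_0=\widehat B\exp(e_{v(S)})x_0$, and the inclusion follows at once with no Bruhat-cell length bookkeeping at all. With your sign, $\exp(e_{v(S)})$ would itself lie in $\widehat B$ and your computation would land in $\widehat B x_0$ rather than $\widehat B\grs_{v(S)}x_0$ — the wrong cell.

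There is a second slip feeding the confusion: $v\exp(e_S)v^{-1}x_0=\exp(\Ad_v e_S)\,x_0=\exp(e_{v(S)})\,x_0$, not $\exp(e_{v(S)})\,v\,x_0$; the $v$ and $v^{-1}$ cancel against each other, not against $x_0$. The spurious extra factor of $v$ is what forces you into the reduced-expression and length-additivity considerations of your last paragraph, which you yourself leave unresolved (``I expect the main obstacle to be exactly this last bookkeeping''). Once the sign of $v(\beta)$ is corrected and the stray $v$ removed, none of that machinery is needed and the proof closes in one line. The final ``in particular'' paragraph is fine as written.
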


\begin{proof}
The first claim follows immediately from the inclusion $v(\Phi^+)\subset\Dap$ together with the definition of $\widehat\Psi(v)$.
To prove the second claim, notice that
$$
 v \exp{(B_ve_S)}\, v^{-1}  \subset  v\, B\, \exp(e_S)\cdot \exp(\wgoa_v) \, B\,v^{-1}.
$$
On the other hand, the right hand side of the previous equation equals $$vBv^{-1} \, \exp(e_{v(S)}) \, \exp(\Ad_v \wgoa_v) vBv^{-1},$$
thus by the first claim of the lemma together with the fact that $v(\grb)<0$ for all $\grb \in S$ we get 
$$
v \exp{(B_ve_S)}\, v^{-1} x_0 \subset \widehat B \exp(-f_{v(S)})\exp(e_{v(S)})\exp(-f_{v(S)}) x_0=\widehat B \grs_{v(S)}x_0.
$$
Suppose now  that $(v,S)$ and $(v,S')$ are admissible pairs such that $B_v e_S=B_v e_{S'}$. Then by the previous equality we get 
$\widehat B \grs_{v(S)} x_0= \widehat B \grs _{v(S')}x_0$, hence $\grs_{v(S)}=\grs_{v(S')}$, and $S=S'$ by Proposition \ref{prop:iniettivita-involuzioni}.
\end{proof}

\begin{proposition}\label{prop:decomposizione-ind}
Let $v \in \widehat W_{\ab}$ and let $\gra \in \widehat \grD$ be such that $s_\gra v \in \widehat W_{\ab}$ and  $v < s_\gra v$. Denote $\grb = -v^{-1}(\a)$ and suppose that $(s_\gra v,S)$ is an admissible pair, then the following holds.
\begin{enumerate}[\noindent i)]
     \item If $\grb$ is not orthogonal to $S$, then $(v,S)$ is admissible and
     	$$B_{s_\gra v} e_{S} = 	B_{v} e_{ S}.$$
           Moreover, $\grs(s_\gra v,S)=s_\gra\circ \grs(v,S)<\grs(v,S)$ and  $L(s_\gra v,S)=L(v,S)-1$.
 
     \item If $\grb$ is orthogonal to $S$, then $(v,S)$ and  $(v, S\cup\{\grb\})$ are both admissible and
  	   $$B_{s_\gra v} e_{S}=B_{v} e_{S} \sqcup B_{v} e_{S \cup \{\grb\}}.  $$
	   Moreover, $e_{ S} \in \overline{B_{v} e_{S\cup \{ \grb\}}}$ and
	   $$\dim B_{s_\gra v} e_{ S} = \dim B_{v} e_{ S\cup \{\grb\}} = \dim B_{v} e_{ S} +1.$$
	   Finally,
	   $\grs(s_\gra v,S)=\grs(v,S)=s_\gra\circ \grs(v,S\cup\{\grb\})< \grs(v,S\cup\{\grb\})$ and   
	   $L(s_\gra v,S)=L(v,S)=L(v,S\cup\{\grb\})-1$.
\end{enumerate}
\end{proposition}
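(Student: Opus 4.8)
The plan is to analyse the action of the minimal parabolic $P_\gra = \widehat B \cup \widehat B s_\gra \widehat B$ on the left translate $v\exp(B_v e_S)v^{-1}x_0$ inside the affine flag variety, using Lemma~\ref{lem:eS} as the bridge between orbit closures in $\wgoa_w$ and Schubert cells. Concretely, write $v' = s_\gra v$ and note $v' \in \widehat W_\ab$, $v < v'$, $\ell(v') = \ell(v)+1$ by Proposition~\ref{prop:discese-AF} and the accompanying remark; since $\grb = -v^{-1}(\gra)$ satisfies $\grb \in \widehat\Psi(v')\smallsetminus\widehat\Psi(v)$ and $v(\grb) = -\gra < 0$, one has $\wgoa_{v'} = \wgoa_v \oplus \gog_\grb$. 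Decompose $B_{v'} = B_v \cdot u_\grb(\mk)$ (a one-parameter subgroup) inside $B_{v'}$, so that $B_{v'}e_S = \bigcup_{t\in\mk} u_\grb(t)\cdot B_v e_S$ (using \eqref{eq:azione}). The key computation is then $u_\grb(t)\cdot e_S = e_S + t\,e_\grb + (\text{terms in }\wgoa_v)$ modulo the adjoint $B_v$-action, where the relevant commutators are governed by whether $\grb + \grg \in \widehat\Phi$ for some (by Lemma~\ref{lemma:adding-roots}, at most one) $\grg\in S$.

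For part (i): if $\grb$ is not orthogonal to $S$, there is a unique $\grg_0 \in S$ with $\grb + \grg_0 \in \widehat\Phi$ — in fact $\grb+\grg_0 \in \widehat\Psi(v')$, hence lies in $\wgoa_{v'}$ but we must check it lies in $\wgoa_v$, which follows since $v(\grb+\grg_0) = -\gra + v(\grg_0) < -\gra$ cannot be $-\gra$ and stays negative. Then $u_\grb(t)\cdot e_S$ differs from $e_S$ by a term $c_1 t\, e_{\grb+\grg_0} + \cdots$ lying in $\wgoa_v$ together with $t\,e_\grb$; but $e_\grb \in \wgoa_{v'}$ is not in $\wgoa_v$. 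The point is to show $u_\grb(t)\cdot e_S \in B_v e_S$ for all $t$: one exhibits an element of $B$ (a torus element rescaling, combined with a unipotent element $u_\grg(s)$ with $\grb = \gra$-lowering) that absorbs the new term, exactly as in the Chevalley-group computation recalled in Section~\ref{ssez:preliminari}. This gives $B_{v'}e_S = B_v e_S$. The statement about involutions, $\grs(v',S) = s_\gra\circ\grs(v,S) < \grs(v,S)$, follows from Proposition~\ref{prop:discese-affini} (which identifies $\gra$ as an affine descent of $(v,S)$ precisely when $\grb$ is not orthogonal to $S$) combined with Lemma~\ref{lemma:discese-CR} and the identity $v'(S) = s_\gra v(S)$, so that conjugating $\grs_{v(S)}$ by $s_\gra$ yields $\grs_{v'(S)}$; the length drops by one by the same lemma, giving $L(v',S) = L(v,S)-1$.

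For part (ii): if $\grb$ is orthogonal to $S$, then $u_\grb(t)\cdot e_S = e_S + t\,e_\grb$ exactly (no cross terms), so $B_{v'}e_S$ is the union over $t\in\mk$ of $B_v$-orbits of $e_S + t\,e_\grb$. For $t\neq 0$, a torus element of $B$ rescales $t\,e_\grb$ to $e_\grb$, so all these orbits coincide with $B_v e_{S\cup\{\grb\}}$ (here one checks $(v,S\cup\{\grb\})$ is admissible: $\grb\in\widehat\Psi(v')\smallsetminus\widehat\Psi(v)$ and $S\cup\{\grb\}$ is still orthogonal, hence $(v,S\cup\{\grb\})$ is $v'$-admissible); for $t=0$ we get $B_v e_S$. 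This yields the disjoint decomposition $B_{v'}e_S = B_v e_S \sqcup B_v e_{S\cup\{\grb\}}$, and $e_S \in \overline{B_v e_{S\cup\{\grb\}}}$ by letting $t\to 0$. Disjointness and the dimension count $\dim B_v e_{S\cup\{\grb\}} = \dim B_v e_S + 1$ follow because $\dim B_{v'} = \dim B_v + 1$ and $B_{v'}e_S$ is irreducible of that dimension while $B_v e_S$ is a proper closed-in-it subset; Lemma~\ref{lem:eS} guarantees the two $B_v$-orbits are genuinely distinct (their images land in different Schubert cells $\widehat B\grs_{v(S)}x_0$ versus $\widehat B\grs_{v(S\cup\{\grb\})}x_0$). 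The involution identities follow from Proposition~\ref{prop:discese-affini}: $\gra$ is not a descent of $(v,S)$ (since $\grb\perp S$) so $\grs(v',S) = \grs(v,S)$, while $\gra$ is a real descent of $(v,S\cup\{\grb\})$ (since now $\grb\in-(S\cup\{\grb\})$), giving $\grs(v,S) = s_\gra\circ\grs(v,S\cup\{\grb\}) < \grs(v,S\cup\{\grb\})$, and the length relations are then immediate from Lemma~\ref{lemma:discese-CR}.

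The main obstacle I expect is the computation in part (i) showing that the cross term $c_1 t\,e_{\grb+\grg_0}$ can be absorbed into the $B_v$-orbit simultaneously with producing the correct $e_\grb$-component — i.e. verifying that there is a single element $(p,x)\in B_v$ with $\Ad_p(x + e_S) = u_\grb(t)\cdot e_S$. This requires a careful bookkeeping of the structure constants in $u_\grb(t)\cdot e_S$ versus the terms one can generate by $u_{\gra'}(s)$ acting on $e_{\grg_0}$ (where $\gra' + \grg_0 = \grb$, so $\gra'$ is the finite root $v^{-1}(\cdot)$ relevant here), using the normalisation of root vectors from the Chevalley construction and the fact (Lemma~\ref{lemma:adding-roots}) that no other member of $S$ interferes. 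One should also handle carefully the possibility that $\grb + \grg_0$ is not a root but $2\grg_0 + \grb$ etc. could appear — though orthogonality constraints on $\widehat\Psi(v')$ (strong orthogonality, as noted after \eqref{eq:facile}) rule out the longer strings. A secondary technical point is ensuring throughout that $\wgoa_{v'} = \wgoa_v \oplus \gog_\grb$ as $B$-modules and that $B_{v'}$ acts on $\wgoa_w$ compatibly, but this is guaranteed by Proposition~\ref{prop:weak-iff-strong} and formulas \eqref{eq:gruppo}--\eqref{eq:azione} as soon as $v' \leq w$, which holds since $(s_\gra v, S)$ is assumed admissible.
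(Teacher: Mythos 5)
Your overall strategy is the paper's: write $\wgoa_{s_\gra v}=\wgoa_v\oplus\gog_\grb$, so that $B_{s_\gra v}e_S=\bigcup_{t\in\mk}B_v(e_S+te_\grb)$, split into cases according to whether $\grb\perp S$, use Lemma \ref{lem:eS} to distinguish the orbits, and deduce the involution and length statements from Proposition \ref{prop:discese-affini} and Lemma \ref{lemma:discese-CR}. Part (ii) and the statements about $\grs(\cdot,\cdot)$ and $L(\cdot,\cdot)$ are handled correctly.

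The gap is in part (i), where you repeatedly treat $\grb+\grg_0$ as an element of $\widehat\Phi$ (``a unique $\grg_0\in S$ with $\grb+\grg_0\in\widehat\Phi$'', ``$\grb+\grg_0\in\widehat\Psi(v')$'', the cross term $c_1t\,e_{\grb+\grg_0}$). This never happens: both $\grb$ and $\grg_0$ lie in $\widehat\Psi(w)$, so their sum has $\grd$-coefficient $-2$ and finite part a sum of two roots of a combinatorial abelian ideal, hence is not a root; non-orthogonality instead forces $\langle\grb,\grg_0^\vee\rangle>0$, i.e.\ $\gre:=\grb-\grg_0\in\Phi$. Moreover the element $(e,te_\grb)\in B_{s_\gra v}$ acts on $\wgoa_w$ purely by translation (formula \eqref{eq:azione}), so $e_S\mapsto e_S+te_\grb$ with no cross terms in either case; the ``key computation'' you announce is therefore not the one needed. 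The step you defer to your closing paragraph --- producing $(p,x)\in B_v$ with $\Ad_p(x+e_S)=e_S+te_\grb$ --- is the whole content of (i), and you only name the mechanism ($u_{\gra'}(s)$ with $\gra'+\grg_0=\grb$). To close it one must check: (a) $\gre\in\Phi^+$, which follows from the maximality of $\grb$ in $\widehat\Psi\smallsetminus\widehat\Psi(v)$ (Proposition \ref{prop:inversioni minimali e massimali}), so that $u_\gre(t)\in B$; (b) by Lemma \ref{lemma:adding-roots} applied to $\gre$, the root $\grg_0$ is the only element of $S$ moved by $u_\gre(t)$, and the higher Chevalley terms $e_{\grg_0+k\gre}$ with $k\geq 2$ lie in $\wgoa_v$, again by maximality of $\grb$. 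Then $u_\gre(t)(e_S+\wgoa_v)=e_S+c_1t\,e_\grb+\wgoa_v$ with $c_1\neq 0$, and since $e_S+\wgoa_v\subset B_ve_S$ this yields $B_v(e_S+\mk^*e_\grb)\subset B_ve_S$, which is exactly what part (i) requires.
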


\begin{proof}
Let $w \in \widehat W_\ab$ be such that $v \leq w$ and $S\subset \widehat\Psi(w)\smallsetminus \widehat\Psi(s_\gra v)$. Denote $v'=s_\gra v$ and $S'=S\cup \grb$, and notice that $\widehat\Psi(v') = \widehat\Psi(v) \sqcup \{\grb\}$. Thus $\wgoa_{v'} = \wgoa_{v} \oplus \gog_{\grb}$ and 
\begin{equation}	\label{eq:Bv-orbite}
	B_{v'} e_{ S} = B_{v} e_{ S} \cup B_{v} (e_{ S} + \mk^* e_{ \grb}).
\end{equation}

i) Suppose that $\grb$ is not orthogonal to $S$. Let $\grg_0 \in S$ be such that $\langle \grb , \grg_0^\vee \rangle 
\neq 0$. Since the sum of two elements in $\widehat\Psi(w)$ is never a root, it follows that $\langle \grb , \grg_0^\vee \rangle 
> 0$, hence $\grb - \grg_0 \in \Phi$. Denote $\gre = \grb - \grg_0$.
By Proposition \ref{prop:inversioni minimali e massimali}, the root $\grb$ is maximal in $\widehat\Psi(w)\smallsetminus \widehat\Psi(v)$, thus $\gre \in \Phi^+$ and by Lemma \ref{lemma:adding-roots} $\grg_0$ is the unique root  in $S$ 
such that $\grg_0+ \gre$ is a root. Hence
$$
u_\gre(t) (e_S +\wgoa_{v}) = e_S + t e_\grb +\wgoa_{v},
$$
and it follows $B_{v'} e_{ S} = B_{v} e_{ S}$.To show the last claim, since $v(\grg)<0$ and $\langle \grb , \grg^\vee \rangle \geq 0$ for all $\grg\in S$, notice that
$$\grs_{v(S)}(\gra) =  \gra + \langle \grb,\grg_0^\vee \rangle v(\grg_0) 
+ \sum_{\grg\neq \grg_0, \grg\in S}\langle \grb,\grg^\vee \rangle v(\grg) < \gra.
$$
Since $\gra$ is simple, it follows that $\gra$ is a descent for $\grs(v,S)$, and by Proposition \ref{prop:discese-affini} it is a complex descent. Thus $s_\gra \circ \grs_{v(S)} = s_\gra \grs_{v(S)} s_\gra = \grs_{v'(S)}$ and we conclude by Lemma
\ref{lemma:discese-CR}.

ii) Suppose now that $\grb$ is orthogonal to $S$. Then 
$(v,S)$ and $(v,S')$ are both admissible, and by Lemma \ref{lem:eS} the orbits  $B_v e_S$ and $B_v e_{S'}$ are different.
Since $S$ is orthogonal, we have $e_S+\mk^*e_\grb \subset T\cdot e_{S'}$, 
thus $B_{v}e_{S'}=B_{v} (e_{ S} + \mk^* e_{ \grb})$ and $e_S\in \overline{B_{v}e_{S'}}$. 
It follows from \eqref{eq:Bv-orbite} that
$B_{v'} e_S = B_v e_S \sqcup B_v e_{S'}$, and that $B_v e_{S'}$ is dense in $B_{v'} e_S$. In particular, $B_v e_{S'}$ has the same dimension of $B_{v'}e_S$.
On the other hand $\dim B_{v'} =\dim B_{v}+1$ and $\dim B_{v'} e_S$ is strictly bigger than $\dim B_{v}e_{S}$, which yields the last dimension equality. The last staments about the involutions follow trivially, since 
$s_\gra$ and $\grs_{v(S)}$ commute  and $\grs_{v(S\cup \{\grb\})}(\gra)=-\gra$. 
\end{proof}

We can now classify the $B_v$-orbits in the $B_v$-stable abelian subalgebras of $\gou$, and compute their dimensions.

\begin{theorem}	\label{teo:parametrization}
Let $v,w \in \Wab$ with $v \leq w$. Then there is a bijection between the  orthogonal subsets of $\widehat\Psi(w) \smallsetminus \widehat\Psi(v)$ 
and the $B_v$-orbits on $\wgoa_w$ given by $S \mapsto B_v e_S$. Moreover
$$
\dim B_ve_S=\ell(v)+L(v,S).
$$
\end{theorem}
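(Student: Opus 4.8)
The plan is to argue by decreasing induction on $\ell(v)$, starting from $v = w$. When $v = w$ the module $\widehat\aa_w$ is homogeneous under $B_w$, so the only orthogonal subset of $\widehat\Psi(w)\smallsetminus\widehat\Psi(w) = \varnothing$ is $S = \varnothing$ itself, and correspondingly there is a single $B_w$-orbit, namely $\widehat\aa_w = B_w e_\varnothing = B_w\cdot 0$; since $\ell(w) + L(w,\varnothing) = \ell(w) + \tfrac12(\ell(\grs_\varnothing) + \card\varnothing) = \ell(w) = \dim\widehat\aa_w$, the base case holds. For the inductive step, assume the statement is known for all $v'$ with $v < v'\leq w$, and pick $v < w$. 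Choose $\gra\in\widehat\grD$ with $v < s_\gra v$ and $s_\gra v\leq w$ (this is possible since $v$ is not maximal below $w$: take any $v'$ covering $v$ with $v'\leq w$, which by the chain property inside $\widehat W_\ab$ — via Proposition~\ref{prop:weak-iff-strong} — can be taken of the form $s_\gra v$, and $s_\gra v\in\widehat W_\ab$ by Proposition~\ref{prop:inversioni minimali e massimali}(ii)). Set $\grb = -v^{-1}(\gra)\in\widehat\Psi(v')\smallsetminus\widehat\Psi(v)$, so that $\widehat\Psi(v') = \widehat\Psi(v)\sqcup\{\grb\}$ and $\widehat\aa_{v'} = \widehat\aa_v\oplus\gog_\grb$.

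The heart of the argument is the following bookkeeping. Any orthogonal $S\subset\widehat\Psi(w)\smallsetminus\widehat\Psi(v)$ falls into one of two classes: either $\grb\notin S$, and then $S\subset\widehat\Psi(w)\smallsetminus\widehat\Psi(v')$ (so $(v',S)$ is $w$-admissible); or $\grb\in S$, in which case write $S = S_0\cup\{\grb\}$ with $S_0$ orthogonal and, by the orthogonality of $S$, $\grb$ orthogonal to $S_0$, and $S_0\subset\widehat\Psi(w)\smallsetminus\widehat\Psi(v')$. Conversely, any orthogonal $S'\subset\widehat\Psi(w)\smallsetminus\widehat\Psi(v')$ is itself such an $S$ (with $\grb\notin S'$), and — provided $\grb$ is orthogonal to $S'$ — so is $S'\cup\{\grb\}$. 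Now I invoke Proposition~\ref{prop:decomposizione-ind}: in case (i), when $\grb$ is \emph{not} orthogonal to $S'$, one has $B_{v'}e_{S'} = B_v e_{S'}$, a single $B_v$-orbit, with $\dim B_v e_{S'} = \dim B_{v'}e_{S'} = \ell(v') + L(v',S') = \ell(v)+1+L(v',S')$; and the proposition also gives $L(v',S') = L(v,S')-1$, so $\dim B_v e_{S'} = \ell(v) + L(v,S')$, as desired. In case (ii), when $\grb$ \emph{is} orthogonal to $S'$, the proposition gives $B_{v'}e_{S'} = B_v e_{S'}\sqcup B_v e_{S'\cup\{\grb\}}$, so the single $B_{v'}$-orbit splits into exactly the two $B_v$-orbits indexed by $S'$ and $S'\cup\{\grb\}$; moreover $\dim B_v e_{S'\cup\{\grb\}} = \dim B_{v'}e_{S'} = \ell(v)+1+L(v',S')$, and since in this case $L(v',S') = L(v,S') = L(v,S'\cup\{\grb\})-1$, this equals $\ell(v) + L(v,S'\cup\{\grb\})$, while $\dim B_v e_{S'} = \dim B_{v'}e_{S'} - 1 = \ell(v) + L(v,S')$.

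Putting these together: as $S'$ runs over orthogonal subsets of $\widehat\Psi(w)\smallsetminus\widehat\Psi(v')$, the sets $B_{v'}e_{S'}$ partition $\widehat\aa_w$ into $B_{v'}$-orbits by the inductive hypothesis, and each of them decomposes into $B_v$-orbits exactly as described, the resulting $B_v$-orbits being indexed precisely by the orthogonal subsets of $\widehat\Psi(w)\smallsetminus\widehat\Psi(v)$ (the $S'$ with $\grb\notin S'$ contributing themselves, and those $S'$ orthogonal to $\grb$ contributing in addition $S'\cup\{\grb\}$; every orthogonal $S\subset\widehat\Psi(w)\smallsetminus\widehat\Psi(v)$ arises exactly once, according to whether $\grb\in S$). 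Injectivity of $S\mapsto B_v e_S$ is Lemma~\ref{lem:eS}, and the dimension formula $\dim B_v e_S = \ell(v) + L(v,S)$ was verified orbit-by-orbit above. I expect the main obstacle — already handled by Proposition~\ref{prop:decomposizione-ind}, so here it is only a matter of assembling the pieces carefully — to be the dichotomy "$\grb$ orthogonal to $S'$ or not'' and keeping the indexing sets straight so that no orthogonal subset is counted twice or missed; the rest is the standard telescoping induction along a saturated chain from $v$ to $w$.
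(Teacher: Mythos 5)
Your proposal is correct and follows essentially the same route as the paper: a decreasing induction on $\ell(v)$ with base case $v=w$, using Proposition~\ref{prop:decomposizione-ind} to split each $B_{s_\gra v}$-orbit into one or two $B_v$-orbits according to whether $\grb$ is orthogonal to $S$, with injectivity supplied by Lemma~\ref{lem:eS} and the dimension formula obtained by the same telescoping computation. The only cosmetic difference is that you spell out the bookkeeping of the indexing sets more explicitly than the paper does.
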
 

\begin{proof}
We have already proved in Lemma \ref{lem:eS} that $B_v$-orbits defined by different orhogonal subsets are different.
We show the remaining claims by decreasing induction on $\ell(v)$. If $v=w$, then $\wgoa_w = B_v e_\varnothing$ and both statements hold trivially since 
$\dim \wgoa_w=\ell(w)$.

Suppose that $\ell(v) < \ell(w)$, and let $\gra \in \widehat \grD$ be such that $s_\gra v \in \Wab$ and $v < s_\gra v \leq w$ 
(notice that such a simple root exists by Proposition \ref{prop:weak-iff-strong}). Denote $\grb = -v^{-1}(\gra)$, 
then $\wPsi(s_\gra v) =\wPsi(v)\sqcup \{\grb\}\subset \widehat \Psi(w)$. By the inductive assumption, every 
$B_{s_\gra v}$-orbit in $\wgoa_w$ has a base point of the form $e_{S}$, 
with $S \subset \wPsi(w) \smallsetminus \wPsi(s_\gra v)$ orthogonal. 
On the other hand, given such a subset $S$, by Proposition \ref{prop:decomposizione-ind} we have
$$
B_{s_\gra v} e_{S} =
\left\{	\begin{array}{cc}
B_v e_{S} \sqcup B_v e_{S \cup \{\grb\}} & \text{if $\grb \perp S$} \\ 
		B_v e_{S} & \text{if $\grb \not \perp S$}
		\end{array} \right. ,
$$
thus every $B_v$-orbit in $\wgoa_w$ has a base point of the form  $e_{S'}$ with $S' \subset \widehat \Psi(w) \smallsetminus \widehat \Psi(v)$ orthogonal.

To show the dimension formula, assume first that $\grb \not \perp S$. Then by the inductive assumption together with Proposition \ref{prop:decomposizione-ind} we have 
\begin{align*}
\dim B_ve_S&=\dim B_{s_\gra v}e_S= \ell(s_\gra v)+L(s_\gra v,S)\\& = 1+\ell(v)+L(v,S)-1=\ell(v)+L(v,S).
\end{align*}
Similar computations for the orbits $B_ve_S$ and $B_v e_{S\cup\{\grb\}}$ show the dimension formula in the case $\grb\perp S$.
\end{proof}

As a corollary we get the classification of the $B$-orbits in the $B$-stable abelian subalgebras of $\u$, and a formula for their dimension. A different proof of the first statement was already given by Panyushev \cite[Theorem 2.2]{Pa5}.

\begin{corollary}\label{cor:parametrizzazionedimensione}
Let $w \in \Wab$. Then there is a bijection between the orthogonal subsets of $\widehat\Psi(w)$ 
and the $B$-orbits on $\wgoa_w$ given by $S \mapsto B e_S$. Moreover
$$
\dim B e_S=L(\grs_S).
$$
\end{corollary}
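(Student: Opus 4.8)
The plan is to deduce this as the special case $v = e$ of Theorem \ref{teo:parametrization}. First I would observe that $B = B_e$, that $e \leq w$ in $\Wab$ for every $w$, and that $\widehat\Psi(e) = \varnothing$, so that the orthogonal subsets of $\widehat\Psi(w) \smallsetminus \widehat\Psi(e)$ are precisely the orthogonal subsets of $\widehat\Psi(w)$. Theorem \ref{teo:parametrization} then immediately gives the bijection $S \mapsto Be_S$ between these subsets and the $B$-orbits on $\wgoa_w$, together with the dimension formula $\dim Be_S = \ell(e) + L(e,S) = L(\grs_{e(S)}) = L(\grs_S)$, since $\ell(e) = 0$ and $e(S) = S$.

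The only thing that really needs a word of justification is that $L(e,S)$, which by definition is $L(\grs_{e(S)}) = L(\grs_S) = \tfrac12(\ell(\grs_S) + \card S)$ (using the remark after Definition \ref{def:discese} that $\rk(\id - \grs_S) = \card S$ for an orthogonal set $S$), is the quantity appearing in the statement. So in fact nothing beyond unwinding notation is required. I would also remark, for completeness, that the finiteness of the set of $B$-orbits on $\wgoa_w$ is part of the content of Theorem \ref{teo:parametrization}, since the orthogonal subsets of the finite set $\widehat\Psi(w)$ are finite in number; and that since $\wgoa_w$ and $\goa_w$ are isomorphic as $B$-modules, the same classification holds verbatim for $\goa_w$, hence for every $B$-stable abelian subalgebra of $\gou$ by the bijection $w \mapsto \goa_w$.

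There is essentially no obstacle here: the work has all been done in Theorem \ref{teo:parametrization}, and this corollary is a direct specialization. If I were writing it out I would keep the proof to two or three sentences, pointing to $v = e$ and to the identity $\ell(e) + L(e,S) = L(\grs_S)$, and perhaps noting the translation to $\goa_w$ and to arbitrary $B$-stable abelian subalgebras of $\gou$ via the parametrization $w \mapsto \goa_w$ recalled in the Introduction.
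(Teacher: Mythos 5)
Your proposal is correct and coincides with the paper's own (implicit) derivation: the corollary is exactly the specialization $v=e$ of Theorem \ref{teo:parametrization}, using $B=B_e$, $\widehat\Psi(e)=\varnothing$, $\ell(e)=0$ and $L(e,S)=L(\grs_{e(S)})=L(\grs_S)$. Nothing further is needed.
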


\section{The Bruhat order of the $B_v$-orbits on $\widehat{\mathfrak a}_w$}\label{sez:principale}


In this section we will study the Bruhat order among the $B_v$-orbits in $\widehat\goa_w$, where $v,w \in \widehat W_\ab$ and $v \leq w$.
The description of the Bruhat order will be based on Lemma \ref{lem:eS} and on the description of some basic 
relations among orbit closures, which will allow us to proceed by induction.
Such basic relations are those described in Proposition \ref{prop:decomposizione-ind}, plus another one coming from the action of the minimal parabolic subgroups of $G$ which will be described in the following proposition.

Let $v\in\widehat W_{\ab}$ and let $\grb\in \Delta$ be such that $v(\grb)\in \widehat \Delta$. Then by Theorem \ref{teo:panyushev-normalizzatore} we have 
$s_\grb(\widehat\Psi(v))\subset \widehat\Psi(v)$, and the minimal parabolic subgroup $P_\grb\subset G$ normalizes $\wgoa_v$. 
In particular we can define a group $P_{\grb,v}=P_\grb \ltimes \wgoa_v$ by equation \eqref{eq:gruppo}, and if $v\leq w\in W_{\ab}$ then $P_{\grb,v}$ 
acts on $\wgoa_w$ by formula \eqref{eq:azione}. Notice that by construction $\dim P_{\grb,v}=\dim B_v +1$.

\begin{proposition} 	\label{prop:discese-finite}
Let $w \in \Wab$ and let $(v,S)$ be a $w$-admissible pair. Let $\gra \in \widehat \grD$ be a descent for $(v,S)$ of finite type and denote 
$\grb = v^{-1}(\gra)\in \Delta$. Then $\overline{B_v e_S} = \overline{P_{\grb,v} e_S}$, and the following hold.
\begin{enumerate}[\indent i)]
	\item If $\gra$ is complex, then $(v,s_\grb(S))$ is $w$-admissible and $e_{s_\grb(S)} \in P_{\grb,v} e_S$.
	\item If $\gra$ is real, write $\grb = \tfrac{1}{2}(\grg_1 - \grg_2)$ as in Proposition \ref{prop:radici-attive-reali} with 
	$\grg_1, \grg_2 \in S$ and set $S_\grb = (S \smallsetminus \{\grg_1, \grg_2\}) \cup \{\grb + \grg_2\}$. Then $(v,S_\grb)$ is $w$-admissible. 
	Moreover $\grs_{v(S_\grb)} = s_\gra\circ \grs_{v(S)}=s_\gra \grs_{v(S)}$ and $e_{S_\grb} \in P_{\grb,v} e_S$.
\end{enumerate}
\end{proposition}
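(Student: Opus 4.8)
The plan is to treat the assertion in three stages: the purely combinatorial statements (that the new sets are $w$-admissible and, in the real case, the identity of involutions), the orbit memberships $e_{s_\grb(S)},e_{S_\grb}\in P_{\grb,v}e_S$, and finally the equality $\overline{B_v e_S}=\overline{P_{\grb,v}e_S}$. Throughout, I would use the fact (from the Remark following Definition~\ref{def:discesefiniteaffini}) that $\grb=v^{-1}(\gra)\in\Delta$ is a descent for $\grs_S$, real precisely when $\gra$ is, which localizes everything around the root $\grb$.

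For the combinatorics: Proposition~\ref{prop:sbetaammissible} gives $s_\grb(S)\subset\widehat\Psi(w)$, with $s_\grb(S)=S$ iff $\grb$ (equivalently $\gra$) is real. In the complex case $s_\grb$ permutes $\widehat\Psi(v)$ (Theorem~\ref{teo:panyushev-normalizzatore}) and $S$ is disjoint from $\widehat\Psi(v)$, so $s_\grb(S)$ is too; being orthogonal and inside $\widehat\Psi(w)$, it makes $(v,s_\grb(S))$ $w$-admissible. In the real case I would write $\grb=\tfrac12(\grg_1-\grg_2)$ with $\grg_1,\grg_2\in S$; the relation $\grs_S(\grb)=-\grb$ forces $\|\grg_1\|=\|\grg_2\|$ and $\langle\grg_2,\grb^\vee\rangle=-2$, so $\grg_2,\grb+\grg_2,\grg_1$ is a $\grb$-string and $\grb+\grg_2=\tfrac12(\grg_1+\grg_2)$ is a root in $\widehat\Psi$; since $\grg_2\in\widehat\Psi(w)$ and $\grg_2\le\grb+\grg_2$, property~\eqref{eq:facile} gives $\grb+\grg_2\in\widehat\Psi(w)$, while $v(\grg_2)<0$ together with $\gra$ being simple forces $\grb+\grg_2\notin\widehat\Psi(v)$; orthogonality of $S_\grb$ follows because $\grb$, hence $\grb+\grg_2$, is orthogonal to $S\smallsetminus\{\grg_1,\grg_2\}$. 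Thus $(v,S_\grb)$ is $w$-admissible. Applying $v$ and cancelling the pairwise commuting factors $s_{v(\grg)}$, $\grg\in S\smallsetminus\{\grg_1,\grg_2\}$, the claimed identity reduces to $s_\gra s_{v(\grg_1)}s_{v(\grg_2)}=s_{\gra+v(\grg_2)}$ inside the rank-two subsystem of type $B_2$ spanned by the long roots $v(\grg_1),v(\grg_2)$ and the short root $\gra$, which is a one-line check; and $s_\gra\grs_{v(S)}=s_\gra\circ\grs_{v(S)}$ because $s_\gra$ and $\grs_{v(S)}$ commute (Lemma~\ref{lemma:discese-CR}).

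For the orbit memberships I would use the rank-one subgroup $G_\grb\subset P_\grb$ generated by $u_{\pm\grb}$. In the complex case $(s_\grb,0)\in P_{\grb,v}$ sends $e_S$ to $\Ad_{s_\grb}e_S$, which coincides with $e_{s_\grb(S)}$ up to the action of $T$ (the map $T\to(\mk^*)^{s_\grb(S)}$ is surjective since $s_\grb(S)$ is orthogonal), whence $e_{s_\grb(S)}\in P_{\grb,v}e_S$. In the real case, Lemma~\ref{lemma:adding-roots} shows $\grg_2$ is the only element of $S$ with $\grg_2+\grb\in\widehat\Phi$, so $G_\grb$ fixes $e_\grg$ for every $\grg\in S\smallsetminus\{\grg_1,\grg_2\}$ and acts on $\mk e_{\grg_2}\oplus\mk e_{\grb+\grg_2}\oplus\mk e_{\grg_1}$ as the three-dimensional (adjoint) representation. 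A $B_2$-subsystem occurs only when $\Phi$ is of type $B$, $C$ or $F_4$, so the hypothesis on $p$ guarantees $p\ne2$ here; thus in this representation $e_{\grg_1}+e_{\grg_2}$ lies in the $G_\grb$-orbit of a nonzero multiple of $e_{\grb+\grg_2}$. Applying a suitable element of $G_\grb$ to $e_S$ and correcting by $T$ then gives $e_{S_\grb}\in P_{\grb,v}e_S$.

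Finally, for $\overline{B_v e_S}=\overline{P_{\grb,v}e_S}$ the inclusion $\subset$ is immediate, so it suffices to prove that $B_v e_S$ is dense in the irreducible variety $P_{\grb,v}e_S$. Since $\dim P_{\grb,v}=\dim B_v+1$, the $\mathbb{P}^1$-bundle $P_{\grb,v}\times_{B_v}\overline{B_v e_S}\twoheadrightarrow\overline{P_{\grb,v}e_S}$ gives $\dim P_{\grb,v}e_S\le\dim B_v e_S+1$, and by Theorem~\ref{teo:parametrization} $P_{\grb,v}e_S$ is a finite union of $B_v$-orbits $B_v e_{S'}$ with $\dim B_v e_{S'}=\ell(v)+L(v,S')$. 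Following the Richardson--Springer analysis of a minimal parabolic acting on a $B$-orbit \cite{RS1,RS2} (as in \cite{GM}), $P_{\grb,v}e_S$ consists of at most three $B_v$-orbits whose associated involutions are obtained from $\grs_{v(S)}$ by $s_\gra\circ(-)$ and its inverse; since $\gra$ is a descent for $\grs_{v(S)}$ we have $s_\gra\circ\grs_{v(S)}<\grs_{v(S)}$ (Lemma~\ref{lemma:discese-CR}), so $\grs_{v(S)}$ is the largest of these involutions, $B_v e_S$ has the largest dimension, and hence it is dense; this also identifies $P_{\grb,v}e_S$ with $B_v e_S\sqcup B_v e_{s_\grb(S)}$ in the complex case and with $B_v e_S\sqcup B_v e_{S_\grb}$ in the real case. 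I expect this last stage to be the main obstacle: ruling out that the $P_{\grb,v}$-orbit is strictly larger than $\overline{B_v e_S}$ is exactly where the dimension formula of Theorem~\ref{teo:parametrization} must be played off against the descent combinatorics of the involutions $\grs_{v(S)}$ (Lemmas~\ref{lemma:discese-CR} and~\ref{lemma:par}), and where the treatments of finite and affine descents part ways.
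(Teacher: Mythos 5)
Your stages 1 and 2 follow the paper's proof almost exactly: the same use of Propositions \ref{prop:discese-AF} and \ref{prop:sbetaammissible} and Theorem \ref{teo:panyushev-normalizzatore} for the admissibility claims, the same rank-two computation $s_\grb s_{\grg_1}s_{\grg_2}=s_{\grb+\grg_2}$ in the $C_2$-subsystem spanned by $\grb$ and $\grg_2$ for the identity of involutions, and the same $\mathrm{SL}_2$/adjoint-representation argument (with $p\neq 2$ guaranteed because a $B_2$-subsystem forces $\Phi$ to be non-simply-laced) to move $e_{\grg_1}+e_{\grg_2}$ onto $e_{\grb+\grg_2}$. The one place where you diverge, and where your write-up is not self-contained, is the density of $B_ve_S$ in $P_{\grb,v}e_S$: you appeal to a Richardson--Springer-type description of $P_{\grb,v}e_S$ as a union of at most three $B_v$-orbits whose involutions are linked by $s_\gra\circ(-)$. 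That structure is not available as an input here --- $P_{\grb,v}=P_\grb\ltimes\wgoa_v$ is not a minimal parabolic acting on a spherical homogeneous space, and statements of exactly this flavour are what the paper is in the process of establishing --- so as written this step is close to circular. The repair is short and uses only ingredients you already have: writing $S'=s_\grb(S)$ in the complex case and $S'=S_\grb$ in the real case, you have shown $e_{S'}\in P_{\grb,v}e_S$, hence $P_{\grb,v}e_S=P_{\grb,v}e_{S'}$; your fibre-bundle bound applied at the base point $e_{S'}$ gives $\dim P_{\grb,v}e_S\leq\dim B_ve_{S'}+1$, and since $\gra$ is a descent, Lemma \ref{lemma:discese-CR} gives $L(v,S')=L(v,S)-1$, so Theorem \ref{teo:parametrization} yields $\dim B_ve_{S'}+1=\dim B_ve_S$. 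The resulting squeeze $\dim B_ve_S\leq\dim P_{\grb,v}e_S\leq\dim B_ve_S$ forces $B_ve_S$ to be dense in the irreducible set $P_{\grb,v}e_S$, which is exactly the paper's argument.
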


\begin{proof}
Recall from Proposition \ref{prop:discese-AF} that $\grb$ is a descent for $\grs_S$, hence 
$s_\grb(S) \subset \widehat\Psi(w)$ by Proposition \ref{prop:sbetaammissible}.  Moreover it follows from Theorem \ref{teo:panyushev-normalizzatore} that $s_\grb(\widehat\Psi(v)) \subset \widehat\Psi(v)$, 
therefore $(v,s_\grb(S))$ is $w$-admissible. We distinguish two cases, depending on whether $\gra$ is complex or real.

i) Suppose that $\gra$ is a complex descent, then by Lemma \ref{lemma:discese-CR} we have
$\grs(v,S) > s_\gra \circ \grs(v,S) =\grs(v, s_\grb (S))$ and $L(v, s_\grb (S)) = L(v,S)-1$.
Hence $s_\grb(S) \neq S$ and 
$\dim B_v e_{s_\grb(S)}=\dim B_v e_S -1$ by 
Theorem \ref{teo:parametrization}. Notice that 
both $e_S$ and $e_{s_\grb(S)}$ belong to $P_{\grb,v} e_S$, therefore
$$
\dim B_v e_S \leq \dim P_{\grb,v} e_S \leq \dim B_v e_{s_\grb(S)} +1 =\dim B_v e_S
$$
and $B_v e_S$ is dense in $P_{\grb,v}e_S$.

ii) Suppose now that $\gra$ is a real descent for $(v,S)$. Since by definition $\grg_1 - \grb = \grg_2 + \grb$ and since $(v,S)$ is $w$-admissible, 
it follows that $(v,S_\grb)$ is $w$-admissible as well. Notice that $\{\grb, \grg_2\}$ is the base of a root subsystem of type $C_2$: 
thus $\grg_2 = s_\grb(\grg_1)$ and $s_{\grg_2}(\grb) = \grb+\grg_2$. 
It follows that $s_\grb s_{\grg_1} s_{\grg_2} = s_{\grb + \grg_2}$, 
hence $\grs_{v(S_\grb)} = s_\gra \grs_{v(S)}=s_\gra\circ \grs_{v(S)}$. 

We now prove that $e_{S_\grb} \in P_{\grb,v}e_S$. 
Let  $G_\grb\subset P_\grb$ be the subgroup generated by $U_\grb,U_{-\grb}$ and let $H$ be the subgroup generated by $G_\grb$, $U_{\grg_2}$, $U_{-\grg_2}$. Then $H$ 
is either isomorphic to $\mathrm{SO}_5$ or to $\mathrm{Sp}_4$, $G_\grb$ is isomorphic to $\mathrm{SL}_2$, and the action of $G_\grb$ on the space generated by 
$e_{\grg_2}, e_{\grg_2+\grb}, e_{\grg_1}$ is equivalent to the action of $\mathrm{SL}_2$ on the space of symmetric matrices of order 2 (namely to the adjont representation of $\mathrm{SL}_2$). In particular, we can normalise 
$e_{\grg_2}, e_{\grg_2+\grb}, e_{\grg_1}$ so that the action of an element $\left(\begin{smallmatrix}a&b\\c&d\end{smallmatrix}\right)$ is expressed in this basis by the matrix
$$
\begin{pmatrix}
a^2 & 2 ab & b^2 \\
ac & ad+bc & db \\
c^2 & 2dc & d^2
\end{pmatrix}
$$
Since $\Phi$ is not simply laced, the characteristic of $\mk$ is by assumption different from $2$. Let $i$ be a square root of $-1$ and let $g \in G_\grb$ the element represented by the matrix $\left(\begin{smallmatrix}
 1 & i \\
 i/2 & 1/2 
\end{smallmatrix}\right) \in \mathrm{SL}_2$. 
Then
$g \cdot (e_{\grg_1}+e_{\grg_2})=e_{\grg_2+\grb}$, hence $g\cdot e_S=e_{S_\grb}$ and
we can conclude as in the previous case.
\end{proof}


Let $(v,S)$ be an admissible pair, let $\gra \in \widehat \grD$ be a descent for $(v,S)$, and denote $\grb = v^{-1}(\gra)$. Then we define
\begin{equation}	\label{eqn:figli}
	\calF_\gra (v,S) = \left\{
	\begin{array}{lll}
		\big(s_\gra v,S\big) & & \text{if } \gra \text{ is complex of affine type,}\\	
		\big(v,s_\grb(S)\big) & & \text{if } \gra \text{ is complex of finite type,}\\
		\big(s_\gra v, S \smallsetminus \{-\grb\}\big) & & \text{if } \gra \text{ is real of affine type,}\\
				\big(v,S_\grb \big) & & \text{if } \gra \text{ is real of finite type.}
	\end{array}	\right.
\end{equation}

\begin{remark}	\label{oss:Falpha}
Denote $\calF_\gra(v,S)=(v',S')$, then by Propositions \ref{prop:decomposizione-ind} and \ref{prop:discese-finite} the following hold:
\begin{itemize}
 \item[i)] If $(v,S)$ is $w$-admissible for some $w \in \Wab$, then $\calF_\gra(v,S)$ is $w$-admissible.
 \item[ii)] $\grs(\calF_\gra(v,S))=s_\gra \circ \grs(v,S)<\grs(v,S)$ and $L(\calF_\gra(v,S))=L(v,S)-1$.
 \item[iii)] If $\gra$ is of finite type, then $v'=v,\,$ $e_S\in P_{\grb,v}e_{S'}$ and $\overline{P_{\grb,v}e_{S'}}=\overline{B_v e_S}$.
 \item[iv)] If $\gra$ is of affine type, then $v'=s_\gra v>v$ and $\overline{B_ve_S}=\overline{B_{v'}e_{S'}}$.
\end{itemize}
\end{remark}

We are now ready to prove our main theorem.

\begin{theorem}\label{teo:principale}
Let $v,w \in \widehat W_\ab$ be such that $v\leq w$, and 
let $(v,R)$ and $(v,S)$ be $w$-admissible pairs. Then $B_v e_R \subset \overline{B_v e_S}$ if and only if $\grs_{v(R)} \leq \grs_{v(S)}$.
\end{theorem}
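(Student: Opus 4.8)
The plan is to prove the two implications separately. The ``only if'' direction is an immediate consequence of Lemma \ref{lem:eS}: if $e_R\in\overline{B_v e_S}$, then applying the morphism $x\mapsto v\exp(x)v^{-1}x_0$, using continuity and Lemma \ref{lem:eS}, we get
\[
v\exp(e_R)v^{-1}x_0\in\overline{v\exp(B_v e_S)v^{-1}x_0}\subset\overline{\widehat B\,\grs_{v(S)}x_0},
\]
while $v\exp(e_R)v^{-1}x_0\in\widehat B\,\grs_{v(R)}x_0$, again by Lemma \ref{lem:eS}. Since the Schubert cells partition $\widehat\calF$ and $\overline{\widehat B\,\grs\,x_0}=\bigsqcup_{\tau\leq\grs}\widehat B\,\tau\,x_0$, this forces $\grs_{v(R)}\leq\grs_{v(S)}$.

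For the ``if'' direction I would argue by induction on $L(v,S)$. If $\grs_{v(R)}=\grs_{v(S)}$, then $\grs_R=\grs_S$ and hence $R=S$ by Proposition \ref{prop:iniettivita-involuzioni} (both $R$ and $S$ being orthogonal subsets of $\widehat\Psi(w)$), so there is nothing to prove; in particular this settles the base case $L(v,S)=0$, where $\grs_{v(S)}=e$ forces $S=R=\varnothing$. Assume then $\grs_{v(R)}<\grs_{v(S)}$; then $\grs_{v(S)}\neq e$ admits a descent $\gra\in\widehat\grD$, of finite or affine type by Proposition \ref{prop:discese-AF}. Put $(v',S')=\calF_\gra(v,S)$: by Remark \ref{oss:Falpha} it is $w$-admissible, $\grs_{v'(S')}=s_\gra\circ\grs_{v(S)}<\grs_{v(S)}$ and $L(v',S')=L(v,S)-1$, and $v'=s_\gra v$ or $v'=v$ according as $\gra$ is of affine or finite type, which depends only on $v$ and $\gra$.

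Suppose first that $\gra$ is a descent for $(v,R)$ as well. Then $(v',R'):=\calF_\gra(v,R)$ is $w$-admissible with the same $v'$, $\grs_{v'(R')}=s_\gra\circ\grs_{v(R)}$, and Lemma \ref{lemma:par}(ii) gives $\grs_{v'(R')}<\grs_{v'(S')}$. By the inductive hypothesis $B_{v'}e_{R'}\subset\overline{B_{v'}e_{S'}}$, and passing back to $(v,R),(v,S)$ is handled by Remark \ref{oss:Falpha}(iv) when $\gra$ is of affine type (the relevant orbit closures coincide), and by Remark \ref{oss:Falpha}(iii) when $\gra$ is of finite type (the inclusion is propagated under the connected group $P_{\grb,v}$, $\grb=v^{-1}(\gra)$, using $\overline{P_{\grb,v}e_{R'}}=\overline{B_v e_R}$ and $\overline{P_{\grb,v}e_{S'}}=\overline{B_v e_S}$). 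Suppose instead that $\gra$ is not a descent for $(v,R)$. Then Lemma \ref{lemma:par}(iii) gives $\grs_{v(R)}\leq\grs_{v'(S')}$. If $\gra$ is of finite type, $v'=v$ and the inductive hypothesis applied to $(v,R)$ and $(v,S')$ gives $B_v e_R\subset\overline{B_v e_{S'}}\subset\overline{B_v e_S}$, the last inclusion by Remark \ref{oss:Falpha}(iii). If $\gra$ is of affine type, then $v'=s_\gra v$, and Proposition \ref{prop:discese-affini} shows that $v^{-1}(\gra)$ is orthogonal to $R$, so $\gra\perp v(R)$, hence $s_\gra$ commutes with $\grs_{v(R)}$, $\grs_{v'(R)}=\grs_{v(R)}$, and $(v',R)$ is again $w$-admissible; the inductive hypothesis then gives $B_{v'}e_R\subset\overline{B_{v'}e_{S'}}$, and since $B_v\subset B_{v'}$ and $\overline{B_{v'}e_{S'}}=\overline{B_v e_S}$ by Remark \ref{oss:Falpha}(iv), we conclude $\overline{B_v e_R}\subset\overline{B_v e_S}$.

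The main obstacle is the bookkeeping of this induction: at each step one must check that, after applying $\calF_\gra$ (or, in the non-descent affine case, after enlarging $v$ to $s_\gra v$), the resulting pair is still $w$-admissible, has a controlled associated involution, and has strictly smaller $L$, so that the one-step geometric relations of Remark \ref{oss:Falpha} chain together with the combinatorial relations of Lemma \ref{lemma:par}. The most delicate point is the non-descent affine case, where Proposition \ref{prop:discese-affini} is needed both to identify $\grs_{v'(R)}$ with $\grs_{v(R)}$ and to guarantee that $(v',R)$ remains admissible.
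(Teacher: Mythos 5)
Your proposal is correct and follows essentially the same route as the paper: the ``only if'' direction via Lemma \ref{lem:eS} and the Bruhat order on the affine flag variety, and the ``if'' direction by induction using the operation $\calF_\gra$, Lemma \ref{lemma:par}, and the same three-case split according to whether $\gra$ is a descent for $(v,R)$ and whether it is of finite or affine type. The only (immaterial) difference is that the paper runs a double induction on $L(v,S)$ and on $\ell(\grs_{v(S)})-\ell(\grs_{v(R)})$, whereas your single induction on $L(v,S)$ suffices since in every case the new pair replacing $(v,S)$ has $L$ decreased by one.
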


\begin{proof}
Assume first that $e_R \in \overline{B_v e_S}$, then $v\exp(B_v e_R)v^{-1}\subset v\overline{\exp(B_v e_S)}v^{-1}$ and by Lemma 
\ref{lem:eS} we get that $\widehat B \grs_{v(R)}x_0\subset \overline{\widehat B\grs_{v(S)}x_0}$. Therefore the inequality $\grs_{v(R)}\leq \grs_{v(S)}$ follows from the description of the Bruhat order 
on the flag variety $\widehat G / \widehat B$.

To prove the other implication we proceed by induction both on $L(v,S)$ and on $\ell(\grs_{v(S)}) - \ell(\grs_{v(R)})$. 

Suppose that $\grs_{v(R)} = \grs_{v(S)}$ (in particular, this is the case if $L(v,S) = 0$): then $\grs_R = \grs_S$, and 
by Proposition \ref{prop:iniettivita-involuzioni} we get $R=S$. Therefore we can assume that $L(v,S) > 0$ and 
$\grs_{v(R)} < \grs_{v(S)}$. Let $\gra \in \widehat \grD$ be a descent for $(v,S)$ and denote $(v',S')=\calF_\gra(v,S)$.\\

\textit{Case 1.} Suppose that $\gra$ is a descent for $(v,R)$ as well. Notice that being  a descent of finite or affine type depends only on $v$ and $\gra$. In particular $\calF_\gra(v,R)$ and $\calF_\gra(v,S)$ share the same first component, and we can write $\calF_\gra(v,R) =(v',R')$ for some $R'$. By Remark \ref{oss:Falpha} ii) and by Lemma \ref{lemma:par} we get $\grs(\calF_\gra(v,R))\leq\grs(\calF_\gra(v,S))$, and by induction it follows
$B_{v'}e_{R'}\subset \overline{B_{v'}e_{S'}}$. 
If $\gra$ is of affine type, then we conclude by Remark \ref{oss:Falpha} iv).  Suppose that $\gra$ is of finite type. Then by \eqref{eqn:figli} we have $v' = v$, hence $B_v e_{R'}\subset \overline{B_v e_{S'}}$ and in particular $P_{\grb,v}e_{R'}\subset \overline{P_{\grb,v}e_{S'}}$. Thus we conclude by
Remark \ref{oss:Falpha} iii). \\

\textit{Case 2.} Suppose now that $\gra$ is a descent of finite type for $(v,S)$, and that it is not a descent for $(v,R)$. 
Then by \eqref{eqn:figli} we have $v'=v$, and by Remark \ref{oss:Falpha} ii) and  Lemma \ref{lemma:par} we have $\grs_{v(R)} \leq \grs_{v(S')}$. 
Notice that $\ell(\grs_{v(S')}) - \ell(\grs_{v(R)}) = \ell(\grs_{v(S)}) - \ell(\grs_{v(R)})-1$, thus we can argue by induction and by Remark \ref{oss:Falpha} iii) we obtain
$B_ve_R\subset \overline{B_ve_{S'}} \subset \overline{B_v e_S}$, which proves the claim.\\

\textit{Case 3.}
Suppose finally that $\gra$ is a descent of affine type for $(v,S)$, and that it is not a descent for $(v,R)$. Then by Remark \ref{oss:Falpha} iv) we have $v'=s_\gra v>v$. 
By Proposition \ref{prop:discese-affini} it follows that $\grb$ is orthogonal to $R$. Thus $(v', R)$ is $w$-admissible by Proposition \ref{prop:discese-AF}, and $\grs(v',R)=\grs(v,R)$. By Remark \ref{oss:Falpha} ii) together with Lemmas \ref{lemma:discese-CR} and \ref{lemma:par} we have $\grs(v',R)\leq \grs(v',S')$, thus we can 
argue by induction since $L(v',S')<L(v,S)$.
Then by Proposition \ref{prop:decomposizione-ind} and Remark \ref{oss:Falpha} iv) we get $B_v e_R \subset \overline{B_{v'} e_{R}} \subset \overline{B_{v'} e_{S'}} = \overline{B_{v} e_{S}}$, which proves our claim.
\end{proof}

\subsection{Further developments.}
We point out that the following stronger version of Theorem \ref{teo:principale} holds, without the need of fixing the minuscule element $w$.

\begin{theorem} \label{teo:forma-forte}
Let $v \in \widehat W_\ab$ and let $(v,R)$ and $(v,S)$ be admissible pairs, then $B_v e_R \subset \overline{B_v e_S}$ if and only if $\grs_{v(R)} \leq \grs_{v(S)}$.
\end{theorem}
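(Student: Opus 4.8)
The plan is to deduce Theorem \ref{teo:forma-forte} from Theorem \ref{teo:principale} by reducing to the case of a single common ambient minuscule element. Given admissible pairs $(v,R)$ and $(v,S)$, by definition there exist $w_1,w_2 \in \widehat W_\ab$ with $v \le w_i$, $R \subset \widehat\Psi(w_1)\smallsetminus\widehat\Psi(v)$ and $S \subset \widehat\Psi(w_2)\smallsetminus\widehat\Psi(v)$. The first step is to find a single $w \in \widehat W_\ab$ with $v \le w$ such that both $R$ and $S$ are contained in $\widehat\Psi(w)\smallsetminus\widehat\Psi(v)$, so that both $(v,R)$ and $(v,S)$ are $w$-admissible; then Theorem \ref{teo:principale} applies verbatim and gives the equivalence for the $B_v$-action on $\widehat\goa_w$.

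The natural candidate for such a $w$ is one whose inversion set is the biclosure of $\widehat\Psi(v) \cup R \cup S$ inside $\widehat\Psi$. Concretely, I would argue as follows. Since $R, S \subset \widehat\Psi$ and $\widehat\Psi(v)$ is biclosed, consider the smallest biclosed subset $L \subset \widehat\Phi^-$ containing $\widehat\Psi(v) \cup R \cup S$ and contained in $\widehat\Psi$ — one needs to check that such a biclosed set exists and is still contained in $\widehat\Psi$. For this, observe that $\widehat\Psi(w_1)$ and $\widehat\Psi(w_2)$ are both biclosed subsets of $\widehat\Psi$ containing the relevant roots, and biclosed subsets of $\widehat\Psi$ that are ``combinatorial ideals'' in the sense of \eqref{eq:facile} are closed under the operation of taking the ideal generated (using that $\widehat\Psi(w_1)+\widehat\Psi(w_2)$-type sums stay in $\widehat\Psi$ by property \eqref{eq:facile}). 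The cleanest route is probably: the union of the combinatorial ideals $\widehat\Psi(w_1)+\delta$ and $\widehat\Psi(w_2)+\delta$ in $\Phi^+$ need not be abelian, but $R$ and $S$ are each orthogonal subsets of a single abelian ideal, and one can take $w$ with $\widehat\Psi(w)$ equal to the biclosure; the key point to verify is that adding roots forced by closure never produces a non-minuscule element, i.e.\ never escapes $\widehat\Psi$. This follows because $\widehat\Psi(w_1) \cup \widehat\Psi(w_2)$ already lies in $\widehat\Psi$ and the biclosure operation within $\widehat\Phi^-$ of a subset of $\widehat\Psi$ consisting of roots of the form $\alpha - \delta$ with $\alpha \in \Phi^+$ stays among such roots (the $d$-coordinate is controlled). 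By the correspondence recalled in Section \ref{sez:minuscoli}, a biclosed subset of $\widehat\Phi^-$ contained in $\widehat\Psi$ is exactly $\widehat\Psi(w)$ for a unique $w \in \widehat W_\ab$, and since $\widehat\Psi(v) \subset \widehat\Psi(w)$ we get $v \le w$ by Proposition \ref{prop:weak-iff-strong}.

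Once $w$ is produced, the proof finishes immediately: $R, S \subset \widehat\Psi(w)\smallsetminus\widehat\Psi(v)$, so $(v,R)$ and $(v,S)$ are $w$-admissible, and Theorem \ref{teo:principale} gives $B_v e_R \subset \overline{B_v e_S}$ if and only if $\grs_{v(R)} \le \grs_{v(S)}$, which is exactly the assertion. Note that the orbits $B_v e_R$ and $B_v e_S$ and their closures do not depend on the choice of the ambient $w$ (the $B_v$-action in \eqref{eq:azione} only sees $\widehat\goa_v$ and the point $e_S$), so there is no consistency issue.

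The main obstacle is the first step: showing that a common ambient minuscule element $w$ exists, i.e.\ that the biclosure of $\widehat\Psi(v) \cup R \cup S$ inside $\widehat\Phi^-$ stays inside $\widehat\Psi$ and hence corresponds to an element of $\widehat W_\ab$. This is essentially a statement that the poset of minuscule elements (equivalently, of $B$-stable abelian subalgebras of $\u$), ordered by inclusion of inversion sets as in Proposition \ref{prop:weak-iff-strong}, is a join-semilattice — or at least that $R$ and $S$, being orthogonal subsets of abelian ideals, admit a common abelian ideal containing both together with $\widehat\Psi(v)$. I would handle this by working with combinatorial abelian ideals in $\Phi^+$: translating back by $\delta$, one must show that given two combinatorial abelian ideals $\Psi(w_1), \Psi(w_2) \supset \Psi(v)$, with $R+\delta \subset \Psi(w_1)$ and $S+\delta \subset \Psi(w_2)$, there is a combinatorial abelian ideal containing $\Psi(v) \cup (R+\delta) \cup (S+\delta)$ — and the abelian condition ii) is the delicate point, which one controls using that $R$ and $S$ are orthogonal (hence strongly orthogonal by the remarks after \eqref{eq:facile}) so that no new root additions among them land in $\Phi^+$. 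If a direct semilattice argument is awkward, an alternative is to bypass it entirely: run the induction of Theorem \ref{teo:principale} directly in the ``$w$-free'' setting, checking at each step of \eqref{eqn:figli} that the operations $\calF_\gra$ preserve admissibility (which Remark \ref{oss:Falpha} i) already records relative to a fixed $w$, but which can be stated without $w$), so that no common $w$ is ever needed.
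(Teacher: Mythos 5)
The reduction you propose (produce a single $w \in \Wab$ with $R \cup S \subset \wPsi(w) \smallsetminus \wPsi(v)$ and then quote Theorem \ref{teo:principale}) is the right shape of argument, but the step you yourself flag as ``the main obstacle'' is not merely delicate: as you state it, it is false. A common ambient $w$ does \emph{not} exist for arbitrary admissible pairs $(v,R)$ and $(v,S)$. Already in type $A_2$ with $v=e$, the pairs $R=\{\gra_1-\grd\}$ and $S=\{\gra_2-\grd\}$ are each admissible (each $\gra_i$ generates an abelian combinatorial ideal), but any biclosed subset of $\wPhi^-$ containing both $\gra_1-\grd$ and $\gra_2-\grd$ must contain their sum $\gra_1+\gra_2-2\grd$, which is a real root not in $\wPsi$; equivalently, no combinatorial abelian ideal of $\Phi^+$ contains both $\gra_1$ and $\gra_2$. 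Your justification that ``no new root additions among them land in $\Phi^+$'' because $R$ and $S$ are orthogonal is exactly where the argument breaks: orthogonality is a hypothesis on each of $R$ and $S$ \emph{separately}, and says nothing about the sum of a root of $R$ with a root of $S$, which is precisely what the abelian condition must exclude. So the biclosure of $\wPsi(v)\cup R\cup S$ can escape $\wPsi$, and the poset of minuscule elements is not a join-semilattice in the sense you need.

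What is actually required is the existence of a common $w$ \emph{under the hypothesis} $\grs_{v(R)} \le \grs_{v(S)}$; after the easy reduction to $v=e$ by induction on $\ell(v)$ (which is the route the paper takes), this is exactly the content of Theorem \ref{teo:forma-fortissima}: if $S \subset \wPsi(w)$ and $\grs_R \le \grs_S$, then $R \subset \wPsi(w)$ as well. The paper states explicitly that it can prove this only by case-by-case considerations based on the classification of the $B$-stable abelian subalgebras of $\gou$, with no uniform proof available. The combinatorial fact you are trying to obtain for free from a biclosure argument is therefore the genuine hard core of the theorem, and your proposal does not supply it. Your fallback of rerunning the induction of Theorem \ref{teo:principale} in a ``$w$-free'' setting also does not come for free: for instance, Case 3 of that proof applies Proposition \ref{prop:discese-affini} to the pair $(v,R)$ using that $v^{-1}(\gra) \in -\wPsi(w)$ for the \emph{same} $w$ that witnesses the admissibility of $(v,R)$, and this is unavailable when $R$ and $S$ live in different ambient ideals.
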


Indeed, arguing by induction on $\ell(v)$ one can easily reduce the previous theorem to the case $v = e$, in which case we can prove the following theorem.

\begin{theorem} \label{teo:forma-fortissima}
Let $w \in \widehat W_\ab$ and let $S \subset \widehat \Psi(w)$ be orthogonal. Let $R \subset \widehat \Psi$ be an orthogonal subset such that $\grs_R \leq \grs_S$, then $R \subset \widehat \Psi(w)$ as well.
\end{theorem}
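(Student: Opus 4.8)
The plan is to prove the following apparently stronger statement by induction on $L(\grs_S)$: for every $w\in\widehat W_\ab$, every $w$-admissible pair $(v,S)$, and every orthogonal subset $R\subset\widehat\Psi$ with $\grs_R\leq\grs_{v(S)}$, one has $R\subset\widehat\Psi(w)$ (Theorem~\ref{teo:forma-fortissima} is the case $v=e$, where $\grs_{v(S)}=\grs_S$ and $S\subset\widehat\Psi(w)$). If $L(\grs_{v(S)})=0$ then $\grs_R=\id$, hence $R=\varnothing$; and when $v=e$ and $\grs_R=\grs_S$, Proposition~\ref{prop:iniettivita-involuzioni} gives $R=S\subset\widehat\Psi(w)$. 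So we may assume $L(\grs_{v(S)})\geq1$.

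Fix a descent $\gra\in\widehat\grD$ of $\grs_{v(S)}$ and set $(v',S')=\calF_\gra(v,S)$; by Remark~\ref{oss:Falpha} this is again $w$-admissible, with $\grs(v',S')=s_\gra\circ\grs_{v(S)}<\grs_{v(S)}$ and $L(v',S')=L(v,S)-1$. If $\gra$ is \emph{not} a descent of $\grs_R$, then Lemma~\ref{lemma:par}(iii) gives $\grs_R\leq s_\gra\circ\grs_{v(S)}=\grs(v',S')$, and applying the inductive hypothesis to $(v',S')$ yields $R\subset\widehat\Psi(w)$. This is the part of the argument that runs exactly as in the proof of Theorem~\ref{teo:principale}.

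If instead $\gra$ \emph{is} a descent of $\grs_R$, the idea is to mimic the recipe \eqref{eqn:figli} for $R$ itself: produce an orthogonal subset $R'\subset\widehat\Psi$ with $\grs_{R'}=s_\gra\circ\grs_R$, observe that $\grs_{R'}=s_\gra\circ\grs_R\leq\grs(v',S')$ by Lemma~\ref{lemma:par}(ii) (trivially if $\grs_R=\grs_{v(S)}$), conclude $R'\subset\widehat\Psi(w)$ by induction, and finally recover $R\subset\widehat\Psi(w)$ by inverting the recipe --- now that $R'\subset\widehat\Psi(w)$, Propositions~\ref{prop:radici-attive-reali}, \ref{prop:sbetaammissible} and~\ref{prop:discese-affini} are all available for $R'$, so the reconstruction of $R$ from $(R',\gra)$ and the verification that $R\subset\widehat\Psi(w)$ reduce to \eqref{eq:facile}. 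The construction of $R'$ splits according to the type of $\gra$: if $\gra$ is of affine type one takes $R'=R$ or $R$ with one root deleted, depending on whether $\gra$ is complex or real (Proposition~\ref{prop:discese-affini}); if $\gra\in\grD$ is of finite type and complex one takes $R'=s_\gra(R)$, noting that a simple descent $\gra$ of $\grs_R$ forces $\gra-\d\notin R$ (otherwise one computes $\grs_R(\gra)=2\d-\gra>0$), whence $s_\gra(R)\subset\widehat\Psi$; and if $\gra\in\grD$ is of finite type and real one takes $R'$ to be the analogue of $S_\grb$ from Proposition~\ref{prop:discese-finite}.

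The step I expect to be the main obstacle is precisely this last sub-case, the finite-type real descent: to write down $R'=R_\grb$ one needs $\gra=\tfrac12(\grg_1-\grg_2)$ with $\grg_1,\grg_2\in R$, and this is exactly the conclusion of Proposition~\ref{prop:radici-attive-reali}, whose proof crucially uses that the roots involved lie inside $\widehat\Psi(w'')$ for some minuscule $w''$ --- which is what we do not yet know about $R$. There are two natural ways around this. The first, which I would try first, is to establish beforehand, using only $\grs_R\leq\grs_S$ together with Lemma~\ref{lem:eS} and the Bruhat order on the affine flag variety $\widehat G/\widehat B$, that $\exp(e_R)\,x_0$ lies in the closure of $\exp(Be_S)\,x_0$ and hence that $R$ is already strongly orthogonal and ``$\grs$-rigid'', i.e.\ that $(e,R)$ is admissible, after which the whole induction goes through verbatim. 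The second is to refine the choice of $\gra$: dispose of all affine and all complex descents first, so as to reduce to the case in which every descent of $\grs_{v(S)}$ is a finite-type real one, and then argue directly that $\grs_{v(S)}$ is a product of commuting reflections $s_{\gra_1}\cdots s_{\gra_k}$ with $\gra_1,\dots,\gra_k\in\grD$ pairwise orthogonal, a situation in which both the interval below $\grs_{v(S)}$ and the shape of $R$ can be analysed by hand.
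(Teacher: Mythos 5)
Your proposal is a plan rather than a proof, and the gap you yourself flag in the finite-type real descent case is not a technicality but the essential obstruction: every structural fact you need about $R$ in order to define $R'$ and to ``invert the recipe'' (Proposition \ref{prop:radici-attive-reali}, Proposition \ref{prop:iniettivita-involuzioni}, Proposition \ref{prop:sbetaammissible}) is proved in the paper only for orthogonal sets contained in some $\widehat\Psi(w'')$, and Remarks \ref{ce} and \ref{nls} show these facts genuinely \emph{fail} for arbitrary orthogonal subsets of $\widehat\Psi$ --- which is exactly the class $R$ belongs to until the theorem is proved. Neither of your two workarounds closes this. The first is circular: Lemma \ref{lem:eS} plus the Bruhat order on $\widehat G/\widehat B$ only yield the implication from orbit containment to $\grs_R\leq\grs_S$, not the converse; knowing $\widehat B\grs_R x_0\subset\overline{\widehat B\grs_S x_0}$ says nothing about the specific point $\exp(e_R)x_0$ (which for a non--strongly-orthogonal $R$ is not even defined by the paper's conventions), and deducing membership in the closure of the much smaller $B$-orbit $\exp(Be_S)x_0$ is precisely the hard direction that the paper only obtains after establishing admissibility. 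The second workaround (``dispose of all affine and complex descents first'') is not carried out, and it is not clear one can always avoid a finite-type real descent of $\grs_{v(S)}$ that is simultaneously a descent of $\grs_R$; the final ``analyse by hand'' step is unsupported. There is also a structural wobble in your generalized statement: you compare the untwisted $\grs_R$ with the twisted $\grs_{v(S)}$, so the descent/type dichotomy of Definition \ref{def:discesefiniteaffini}, which is formulated for pairs $(v,\cdot)$, does not transfer cleanly to $R$.

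For what it is worth, the paper does not contain a proof of this theorem either: the authors state explicitly that they can prove it only by case-by-case considerations based on the classification of the $B$-stable abelian subalgebras of $\gou$, and that no uniform proof is known to them (a conceptual proof of a special case appears in \cite[Proposition 2.13]{GMP}). So you have correctly located the point where a uniform inductive argument breaks down, but you have not overcome it, and your proposal cannot be accepted as a proof.
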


However we are able to prove Theorem \ref{teo:forma-fortissima} only involving case by case considerations based on the classification of the $B$-stable abelian subalgebras of $\gou$, and we do not have a uniform proof (see \cite[Proposition 2.13]{GMP} for a conceptual proof in a special case).

\vskip10pt
\footnotesize{

\noindent{\bf J.G.}: Dipartimento di Matematica, Universit\`a di Bologna, Piazza di Porta San Donato 5, 40126 Bologna, Italy;
{\tt jacopo.gandini@unibo.it}

\noindent{\bf A.M.}: Dipartimento di Matematica, Universit\`a di Pisa, Largo Bruno Pontecorvo 5, 56127 Pisa, Italy;
{\tt andrea.maffei@unipi.it}

\noindent{\bf P.M.F.}: Politecnico di Milano, Polo regionale di Como, 
Via Valleggio 11, 22100 Como, Italy;
{\tt pierluigi.moseneder@polimi.it}

\noindent{\bf P.P.}: Dipartimento di Matematica, Sapienza Universit\`a di Roma, Piazzale Aldo Moro 2, 00185 Roma, Italy;
{\tt papi@mat.uniroma1.it}
\end{document}